\definecolor{Myblue}{rgb}{0,0,0.6}
\newtheorem{theorem}{Theorem}[section]
\newtheorem{proposition}[theorem]{Proposition}
\newtheorem{lemma}[theorem]{Lemma}
\newtheorem{corollary}[theorem]{Corollary}
\newtheorem{setup}[theorem]{Setup}
\newtheoremstyle{example}{\topsep}{\topsep}
	{}
	{}
	{\bfseries}
	{.}
	{2pt}
	{\thmname{#1}\thmnumber{ #2}\thmnote{ #3}}
	\theoremstyle{example}
	\newtheorem{definition}[theorem]{Definition}
	\newtheorem{example}[theorem]{Example}
	\newtheorem{remark}[theorem]{Remark}
	\newtheorem{strat}[theorem]{Strategy}
\numberwithin{equation}{section}
\def\res{\operatorname{Res}}
\def\im{\operatorname{Im}}
\def\LG{\mathcal{LG}}
\def\Hom{\operatorname{Hom}}
\def\be{\begin{equation}}
\def\ee{\end{equation}}
\def\nZ{\mathds{Z}}
\def\nQ{\mathds{Q}}
\def\L{\mathcal{C}}
\def\ferm{\gamma}
\def\fermc{\gamma^\dagger}
\DeclareMathOperator{\End}{End}
\DeclareMathOperator{\hmf}{hmf}
\DeclareMathOperator{\HMF}{HMF}
\DeclareMathOperator{\At}{At}
\DeclareMathOperator{\Cat}{Cat}
\DeclareMathOperator{\Spec}{Spec}
\begin{document}

\def\Res{\res\!}
\newcommand{\ud}{\mathrm{d}}
\newcommand{\Ress}[1]{\res_{#1}\!}
\newcommand{\cat}[1]{\mathcal{#1}}
\newcommand{\lto}{\longrightarrow}
\newcommand{\xlto}[1]{\stackrel{#1}\lto}
\newcommand{\mf}[1]{\mathfrak{#1}}
\newcommand{\md}[1]{\mathscr{#1}}
\def\sus{\l}
\def\l{\,|\,}
\def\sgn{\textup{sgn}}

\title{The cut operation on matrix factorisations}
\author{Daniel Murfet}


\maketitle

\begin{abstract}
The bicategory $\LG$ of Landau-Ginzburg models has polynomials as objects and matrix factorisations as $1$-morphisms. The composition of these $1$-morphisms produces infinite rank matrix factorisations, which is a nuisance. In this paper we define a bicategory $\L$ which is equivalent to $\LG$ in which composition of $1$-morphisms produces finite rank matrix factorisations equipped with the action of a Clifford algebra. This amounts to a finite model of composition in $\LG$.
\end{abstract}

\tableofcontents

\section{Introduction}

The bicategory $\LG_k$ of Landau-Ginzburg models \cite{lgdual} over a noetherian $\mathbb{Q}$-algebra $k$ is a pivotal bicategory defined in terms of the geometry of isolated hypersurface singularities. This bicategory has various applications \cite{genorb,ade, kr0401268} but an obstacle to further development of the theory is that, while the objects, $1$-morphisms and $2$-morphisms of $\LG_k$ are simply described, the composition operation for $1$-morphisms has an ``infinitary'' character which makes it hard to work with. The current paper aims to resolve this problem. 

The objects of $\LG_k$ are a class of polynomials $W(x)$ called potentials, and $1$-morphisms $W(x) \lto V(y)$ are finite-rank matrix factorisations \cite{eisenbud} of the difference $V(y) - W(x)$ over $k[x,y]$. The idempotent completion of the homotopy category of such matrix factorisations is denoted $\LG_k(W,V)$. For a third potential $U(z)$, composition in $\LG_k$ is a functor
\begin{gather*}
\LG_k(V,U) \times \LG_k(W,V) \lto \LG_k(W,U)\,,\\
(Y , X) \longmapsto Y \circ X
\end{gather*}
defined for matrix factorisations $Y$ of $V - U$ and $X$ of $W - V$ by
\be
Y \circ X = \big( Y \otimes_{k[y]} X, d_Y \otimes 1 + 1 \otimes d_X \big)\,.
\ee
This is the analogue of convolution of Fourier-Mukai kernels in the matrix factorisation setting and suffers from the same defect, namely, the resulting matrix factorisation $Y \circ X$ is a free module of \emph{infinite rank} over $k[x,z]$. The problem of describing this \emph{a priori} infinite object in some finite way was studied in \cite{dm1102.2957} and the partial solution provided there was to explicitly describe a finite rank matrix factorisation together with an idempotent, splitting in the homotopy category of matrix factorisations to $Y \circ X$. However this solution is not sufficiently functorial to extent to a coherent description of the entire bicategory $\LG_k$.

The current paper refines \cite{dm1102.2957} by presenting the idempotents developed there as arising from representations of Clifford algebras (see Remark \ref{remark:relation_to_toby_paper}). This leads us to define a new bicategory $\L$ in which the objects are again potentials and the $1$-morphisms $W \lto V$ are finite rank matrix factorisations of $V - W$ additionally equipped with the structure of a representation of a Clifford algebra. This bicategory is equivalent to $\LG_k$, but it is simpler: the composition operation is described by a finite number of polynomial functions of the input data. More precisely, the composition of $1$-morphisms in $\L$ is a functor
\begin{gather}
\L(V,U) \times \L(W,V) \lto \L(W,U)\,,\label{eq:cut_functor_intro}\\
(Y,X) \longmapsto Y \l X
\end{gather}
which we call the \emph{cut operation}, and this operation is polynomial in the sense that the coefficients in the polynomials making up the matrix factorisation $Y \l X$ and its Clifford action are themselves polynomial functions of the coefficients defining $X,Y$.

 Let us describe the cut in the most important case, where $Y, X$ are matrix factorisations as above, not equipped with any additional structure as representations of a Clifford algebra. The Jacobi algebra $J_V = k[y_1,\ldots,y_m]/(\partial_{y_1} V, \ldots, \partial_{y_m} V)$ is a finitely generated free $k$-module by the assumption that $V(y)$ is a potential, and the cut is defined to be the following finite rank matrix factorisation over $k[x,z]$
\be
Y \l X = Y \otimes_{k[y]} J_V \otimes_{k[y]} X
\ee
equipped with a family of odd closed $k[x,z]$-linear operators $\{\ferm_i, \fermc_i\}_{i=1}^m$ defined by
\begin{equation}\label{eq:intro_clifford_act1_intro}
\ferm_i = \At_i\,, \qquad \fermc_i = - \partial_{y_i}(d_X) - \frac{1}{2} \sum_q \partial_{y_q} \partial_{y_i}(V) \At_{q}
\end{equation}
where the $\At_q$ are Atiyah classes (Definition \ref{defn:atiyah}). These operators satisfy Clifford relations
\be\label{eq:clifford_relations_intro}
\ferm_i \ferm_j + \ferm_j \ferm_i = 0, \qquad \fermc_i \fermc_j + \fermc_j \fermc_i = 0, \qquad \ferm_i \fermc_j + \fermc_j \ferm_i = \delta_{ij}
\ee
up to homotopy (see Theorem \ref{theorem:cut_is_rep}). The differential on $Y \l X$ and the Clifford operators can be written explicitly as matrices over $k[x,z]$, where the coefficients of the monomials in each row and column are polynomials in the monomial coefficients of $d_Y, d_X$ and $V$. Thus, the cut operation is a polynomial function of its inputs and, in particular, infinite rank matrix factorisations do not appear in the definition of $\L$.

Let us sketch where the cut operation comes from, why Clifford algebras are involved, and why there is an equivalence $\L \cong \LG_k$. In Section \ref{section:intro_appli} we describe various applications. By the results of \cite{dm1102.2957} there is an isomorphism in the homotopy category of (infinite rank) matrix factorisations of $U - W$ over $k[x,z]$ of the form
\begin{equation}\label{eq:final_finite_model_intro}
\xymatrix@C+4pc{
Y \l X \ar@<-1ex>[r]_-{\Phi^{-1}} & S_m \otimes_{k} ( Y \otimes X )\ar@<-1ex>[l]_-{\Phi}
}
\end{equation}
where $S_m = \bigwedge ( k^{\oplus m}[1] )$ denotes the exterior algebra as a $\nZ_2$-graded module, on the space $k^{\oplus m}$ placed in degree one. The main theorem (Theorem \ref{theorem:htpy_equivalence_main}) identifies the natural action of the Clifford algebra $C_m = \End_{k}(S_m)$ on the right hand side of \eqref{eq:final_finite_model_intro} with the action of $C_m$ via the aforementioned operators $\ferm_i, \fermc_i$ on the left hand side.

There is a category called the \emph{Clifford thickening} of the homotopy category of matrix factorisations of $U - W$, in which objects are matrix factorisations equipped with a $C_l$-action for some $l$ (i.e. $l$ is allowed to be different for different objects). In this category $S_m \otimes_{k} ( Y \otimes X )$ with its $C_m$-action is isomorphic to $Y \otimes X$ with no Clifford action, and so \eqref{eq:final_finite_model_intro} can be read as a natural isomorphism $Y \l X \cong Y \otimes X$. We prove this is suitably functorial in $Y,X$ and therefore defines an equivalence $\L \cong \LG_k$.

\vspace{0.5cm}

The paper is structured as follows: in Section \ref{section:background} we recall the definition of superbicategories and Clifford algebras, and introduce the Clifford thickening of a supercategory. The reason we need the formalism of supercategories is that we make extensive use of Clifford actions on objects of linear categories, and for this to make sense the categories themselves must be $\nZ_2$-graded (that is, supercategories). In Section \ref{section:lg_cut_system} we define $\L$ and in Section \ref{section:equivllg} we prove it is equivalent to $\LG_k$ as a superbicategory without units. 

\subsection{Applications}\label{section:intro_appli}

\subsubsection{$A_\infty$-algebras and minimal models}

As a special case of the deformation retract \eqref{eq:final_finite_model_intro} we obtain in Section \ref{example:computing_homs} for any potential $V(y)$ and matrix factorisations $X,X'$ of $V$ a deformation retract of $\nZ_2$-graded complexes of $k$-modules
\begin{equation}\label{eq:final_finite_model_hom}
\xymatrix@C+4pc{
(X')^{\vee} \l X \ar@<-1ex>[r]_-{\Phi^{-1}} & S_m \otimes_{k} \Hom_{k[y]}(X', X) \ar@<-1ex>[l]_-{\Phi}
}
\end{equation}
where $\Hom_{k[y]}(X',X)$ denotes the Hom-complex. Observe that if $k$ is a field this is infinite-dimensional (with finite-dimensional cohomology) while $(X')^{\vee} \l X$ is finite-dimensional.

In the case $X = X'$ the right hand side is a DG-algebra and the deformation retract may be taken as the input to the standard algorithms for producing an $A_\infty$-minimal model of $\End_{k[y]}(X)$. It turns out that this input is well-suited for actually doing computations, which we hope to return to elsewhere.

\subsubsection{A geometric enrichment of $\LG_k$}

One of our motivations for introducing $\cat{C}$ is that it has an enrichment over affine schemes. This is not true of $\LG_k$, precisely because the composition is infinitary. We hope to return to this elsewhere, but for context we include a sketch assuming $k$ is an algebraically closed field of characteristic zero. Given potentials $W(x), V(y)$, a matrix factorisation of $V(y) - W(x)$ is after choosing a homogeneous basis just a matrix
\be\label{eq:intro_capitalD}
D \in M_{2r}( k[x,y] ) \qquad \text{ satisfying } \qquad D^2 = (V(y) - W(x)) \cdot I_{2r}
\ee
where, dividing the matrix $D$ into $r \times r$ blocks, the upper left and bottom right blocks are zero matrices. If we fix the size $r$ of the blocks and bound the degrees of the monomials appearing in $D$ by an integer $s$, then we can easily parametrise matrix factorisations by the $k$-points of an affine scheme.

We may add further coordinates and equations to encode closed odd operators satisfying Clifford relations \eqref{eq:clifford_relations_intro} and, in this way, for a choice of parameters $\lambda = (r, s, \ldots)$ define a scheme $\mathscr{M}_{\lambda}(W,V)$ such that as sets
\begin{equation}\label{eq:param_frmf}
\operatorname{ob}(\cat{C}(W,V)) = \bigcup_{\lambda} \mathscr{M}_{\lambda}(W,V)_{k}
\end{equation}
where $Z_{k}$ denotes the $k$-points of a scheme $Z$. The cut operation is a polynomial function of its inputs, so there is an indexed family of morphisms of schemes
\be\label{eq:enrichment_morphisms_intro}
\mathscr{M}_\mu(V,U) \times \mathscr{M}_\lambda(W,V) \lto \mathscr{M}_{f(\lambda, \mu)}(W, U)
\ee
lifting the functor \eqref{eq:cut_functor_intro} on the level of objects for some function $f(\lambda, \mu)$ of indices. Finally, using the approach of Section \ref{example:computing_homs}, we can define for each pair of potentials $W,V$ and tuple of parameters $\lambda$ a vector bundle of Clifford representations
\be\label{eq:enrichment_intro_bundles}
\xymatrix{
\mathscr{H}\! om_{W,V,\lambda} \ar[d]\\
\mathscr{M}_\lambda(W,V) \times \mathscr{M}_\lambda(W,V)\,,
}
\ee
such that for matrix factorisations $X,X'$ of $V - W$ the fiber of this bundle over the point $(X',X)$ is the Clifford representation $(X')^{\vee} \l X$ of \eqref{eq:final_finite_model_hom}, which is isomorphic to the Hom-complex $\Hom_{k[x,y]}(X',X)$ tensored with a suitable spinor representation. 

The enrichment of $\cat{C}$ then consists of the family of schemes $\mathscr{M}_\lambda(W,V)$, the composition morphisms \eqref{eq:enrichment_morphisms_intro}, the bundles \eqref{eq:enrichment_intro_bundles}, and various other data satisfying natural constraints which encode the structure of the bicategory.

\subsubsection{Semantics of linear logic}

In forthcoming work we use the above enrichment of $\L$ to define a semantics of linear logic \cite{girard_llogic} in which formulas are interpreted by tuples $(\md{X}, k[x], W)$ where $\md{X}$ is a scheme and $W \in k[x]$ is a potential. Proofs in the logic are interpreted by certain correspondences between these pairs. Let us briefly explain the general point which leads us to use $\L$ rather than $\LG$, for which we will use the denotation of the Church numeral $\underline{2}$ (see \cite{murfet_ll}).

Let $\alpha$ be a variable of the logic and suppose it has denotation $\llbracket \alpha \rrbracket = (\Spec(k), k[x], W)$ in the semantics. Let $\mathscr{M} = \mathscr{M}(W,W)$ be the (indexed) affine scheme parametrising $1$-morphisms $X: W \lto W$ in $\L$, as discussed above. Then the semantics works as follows:
\begin{gather*}
\llbracket \alpha \multimap \alpha \rrbracket = \big(\Spec(k), k[x,x'], W(x) - W(x') \big)\,,\\
\llbracket {!}( \alpha \multimap \alpha ) \rrbracket = \big( \mathscr{M}, k, 0 \big)\,.
\end{gather*}
A proof of the sequent ${!}(\alpha \multimap \alpha) \vdash \alpha \multimap \alpha$ has for its denotation a correspondence between $\llbracket {!}( \alpha \multimap \alpha ) \rrbracket$ and $\llbracket \alpha \multimap \alpha \rrbracket$, which is a matrix factorisation of $W(x) - W(x')$ over the scheme $\mathscr{M} \times \Spec(k[x,x'])$. For example, the Church numeral $\underline{2}$ has for its denotation the matrix factorisation whose fiber over a loop $X: W \lto W$, viewed as a point of $\mathscr{M}$, is the square $X \l X$ as a matrix factorisation of $W(x) - W(x')$ over $\Spec(k[x,x'])$. For these definitions to work, the coefficients of the monomials in the matrix factorisation $X \l X$ need to be given explicitly as polynomial functions of the coefficients in $X$. This means we have to use $\L$ rather than $\LG_k$.

\medskip

\emph{Acknowledgements.} Thanks to Nils Carqueville for helpful comments on the draft.

\section{Background}\label{section:background}

Throughout $k$ is a noetherian $\mathbb{Q}$-algebra.\footnote{The noetherian hypothesis is only needed in Section \ref{section:superbicatLG} when we pass from a ring to its adic completion with respect to the Jacobi ideal, and we wish to know the morphism from the ring to its completion is flat. This completion is only done to ensure the existence of a suitable connection - if a connection can be provided by other means (e.g. a grading) then the noetherian hypothesis could be dropped.} By default categories and functors are $k$-linear. We write $\partial_x$ for the differential operator $\frac{\partial}{\partial x}$.

\subsection{Supercategories}

A superbicategory has for every $1$-morphism $F$ an associated $1$-morphism $\Psi F$ with $\Psi^2 \cong 1$, and in this sense the categories of $1$-morphisms are $\mathbb{Z}_2$-graded. The bicategory of Landau-Ginzburg models is an example of a superbicategory. Since Clifford algebras (which are $\mathbb{Z}_2$-graded) play a fundamental role, superbicategories are the natural language. Our main references for the foundations are \cite{ellis_lauda,kang,kang2}, but we give the full definitions below as these references work only with strict bicategories.

\begin{definition} A \emph{supercategory} is a category $\cat{C}$ together with a functor $\Psi: \cat{C} \lto \cat{C}$ and a natural isomorphism $\xi: \Psi^2 \lto 1_{\cat{C}}$ satisfying the condition
\[
\xi * 1_{\Psi} = 1_{\Psi} * \xi
\]
as natural transformations $\Psi^3 \lto \Psi$. A \emph{superfunctor} $(F, \gamma)$ from a supercategory $(\cat{C}, \Psi_{\cat{C}})$ to a supercategory $(\cat{D}, \Psi_{\cat{D}})$ is a functor $F: \cat{C} \lto \cat{D}$ together with a natural isomorphism $\gamma: F \Psi_{\cat{C}} \lto \Psi_{\cat{D}} F$ satisfying
\[
1_F * \xi = (\xi * 1_F ) ( 1_{\Psi} * \gamma ) ( \gamma * 1_{\Psi} )\,.
\]
A \emph{supernatural transformation} $\varphi: F \lto G$ between superfunctors $(F,\gamma_F), (G,\gamma_G)$ is a natural transformation making the following diagram commute:
\[
\xymatrix{
F \Psi \ar[r]^-{\varphi \Psi}\ar[d]_-{\gamma_F} & G \Psi \ar[d]^-{\gamma_G}\\
\Psi F \ar[r]_-{\Psi \varphi} & \Psi G
}\,.
\] 
\end{definition}

\begin{example}\label{example:Aassup} If $A$ is a $\mathbb{Z}_2$-graded $k$-algebra there is a supercategory $\cat{A}$ with set of objects $|\cat{A}| = \mathbb{Z}_2$ and morphisms $\cat{A}(0,0) = \cat{A}(1,1) = A_0, \cat{A}(0,1) = \cat{A}(1,0) = A_1$ with the obvious composition and addition rules. The functor $\Psi$ is defined on objects by $\Psi(i) = i+1$ and on morphisms by the identity, while $\xi = 1$.
\end{example}

Let $\cat{C}$ be a supercategory. For objects $X,Y \in \cat{C}$ we define the $\mathbb{Z}_2$-graded $k$-module
\[
\cat{C}^*(X,Y) = \prod_{i \in \mathbb{Z}_2} \cat{C}(X, \Psi^i Y)\,.
\]
There is for any triple $X,Y,Z$ of objects a morphism of graded modules
\[
\cat{C}^*(Y,Z) \otimes_k \cat{C}^*(X, Y) \lto \cat{C}^*(X,Z)
\]
given for instance by $\varphi \otimes \psi \mapsto \xi \Psi( \varphi ) \psi$ when $\varphi, \psi$ are both of degree one. This makes $\cat{C}$ into a category enriched over the monoidal category of $\mathbb{Z}_2$-graded $k$-modules.

There are isomorphisms of graded modules $\cat{C}^*(X, \Psi Y) \cong \Psi \cat{C}^*(X, Y) \cong \cat{C}^*(\Psi X, Y)$ where $\Psi V = V [1]$ denotes the grading shift on a $\mathbb{Z}_2$-graded $k$-module. For $\mathbb{Z}_2$-graded $k$-modules $V,W$ we write $\Hom^0_k(V, W)$ for the module of degree zero maps and $\Hom_k^*(V, W)$ for the $\mathbb{Z}_2$-graded module of homogeneous maps. If $A$ is a $\mathbb{Z}_2$-graded $k$-algebra then $A^{\operatorname{op}}$ denotes the same underlying graded module with multiplication $f * g = (-1)^{|f||g|} gf$.

\begin{definition}\label{defn:algebra_modules} Let $A$ be a $\mathbb{Z}_2$-graded $k$-algebra and $\cat{C}$ a supercategory. A \emph{left $A$-module in $\cat{C}$} is an object $X \in \cat{C}$ together with a morphism of $\mathbb{Z}_2$-graded algebras $A \lto \cat{C}^*(X,X)$. A \emph{morphism} of left $A$-modules is a morphism in $\cat{C}$ which commutes with the $A$-action in the obvious sense. The category of left $A$-modules in $\cat{C}$ is denoted $\cat{C}_A$. Right modules are defined similarly, using $A^{\operatorname{op}}$.
\end{definition}

\begin{remark}\label{remark:supercat_idempcomp} Viewing $A$ as a supercategory $\cat{A}$ as in Example \ref{example:Aassup}, there is an equivalence between the category of $A$-modules in $\cat{C}$ and the category of superfunctors $\cat{A} \lto \cat{C}$ and supernatural transformations.
\end{remark}

\begin{remark}\label{remark:idempotent_completion} The \emph{idempotent completion} $\cat{C}^\omega$ of a category $\cat{C}$ has as objects pairs $(X,e)$ consisting of an object $X \in \cat{C}$ and an idempotent endomorphism $e: X \lto X$. In $\cat{C}^\omega$ a morphism $\phi: (X,e) \lto (Y,e)$ is a morphism $\phi: X \lto Y$ satisfying $\phi e = \phi$ and $e \phi = \phi$. The identity on $(X,e)$ is $e$. There is a fully faithful functor $\iota: \cat{C} \lto \cat{C}^{\omega}, \iota(X) = (X,1_X)$ which is an equivalence if $\cat{C}$ is idempotent complete (i.e. all idempotents split in $\cat{C}$).

If $\cat{C}$ is a supercategory then there is a functor $\Psi^\omega: \cat{C}^\omega \lto \cat{C}^\omega$ defined on objects by $\Psi(X,e) = (\Psi X, \Psi e)$ and a natural isomorphism $\xi: \Psi^\omega \circ \Psi^\omega \lto 1_{\cat{C}^\omega}$ defined by $\xi_{(X,e)} = e \xi_X$. The tuple $(\cat{C}^\omega, \Psi^\omega, \xi^\omega)$ is a supercategory.
\end{remark}

For background on bicategories see \cite{bor94, benabou, gray, kellystreet, lack}. We follow the notation of \cite{lgdual}, so that lower case letters $a,b, \ldots$ denote objects of a bicategory, while upper case letters $X,Y, \ldots$ and greek letters $\alpha, \beta, \ldots$ respectively denote $1$-morphisms and $2$-morphisms. Units are denoted $\Delta$, associators are $\alpha$, and unitors are $\lambda, \rho$. The composition of $Y,X$ is denoted $YX$ or $Y \circ X$.

\begin{definition} A \emph{superbicategory} is a bicategory $\cat{B}$ together with the data:
\begin{itemize}
\item For each object $a$ a $1$-morphism $\Psi_a: a \lto a$ and a $2$-isomorphism $\xi_a: \Psi^2_a \lto \Delta_a$.
\item For each $1$-morphism $X: a \lto b$ a natural $2$-isomorphism $\gamma_X: X \Psi_a \lto \Psi_b X$.
\end{itemize}
This data is required to satisfy the following axioms:
\begin{itemize}
\item For each composable pair $X,Y$ of $1$-morphisms the diagram
\[
\xymatrix@C+2pc@R+1pc{
(YX) \Psi \ar[rrr]^-{\gamma_{YX}} \ar[d]_-{\alpha} & & & \Psi ( YX )\\
Y ( X \Psi ) \ar[r]_-{1_Y * \gamma_X} & Y( \Psi X ) \ar[r]_-{\alpha^{-1}} & ( Y \Psi ) X \ar[r]_-{\gamma_Y * 1_X} & (\Psi Y ) X \ar[u]_-{\alpha}
}
\]
commutes.
\item For every object $a$, $\xi_a * 1_\Psi = 1_\Psi * \xi_a$.
\item For every $1$-morphism $X : a \lto b$, $1_X * \xi_a = ( \xi_b * 1_X ) ( 1_\Psi * \gamma_X ) (\gamma_X * 1_\Psi )$.
\end{itemize}
\end{definition}

\begin{example} Small supercategories, superfunctors and supernatural transformations form a superbicategory $\Cat^{\operatorname{sup}}_k$.
\end{example}

A superbicategory can be constructed out of a bicategory $\cat{B}$ in which the categories $\cat{B}(a,b)$ are all equipped with the structure of supercategories; see Appendix \ref{section:constructing_superbicategories}.

\begin{example}\label{example:bicategory_m} There is a bicategory of $\mathbb{Z}_2$-graded $k$-algebras where $1$-morphisms are $\mathbb{Z}_2$-graded bimodules and $2$-morphisms are degree zero bimodule maps. Given a $B$-$A$-module $M$ the shift $M[1]$ has the grading $M[1]_i = M_{i+1}$ and the left and right action given by
\[
b \cdot m = (-1)^{|b|} bm, \qquad m \cdot a = (-1)^{|a|} m a\,.
\]
This is a functor $\Psi = (-)[1]$ on the category of $B$-$A$-bimodules and with $\xi = 1$ this defines the structure of a supercategory on this category of bimodules. The usual isomorphisms of bimodules $\tau$
\begin{align}
N[1] \otimes M &\lto (N \otimes M)[1], \qquad n \otimes m \mapsto n \otimes m \label{eq:shift_iso_a}\\
N \otimes M[1] &\lto (N \otimes M)[1], \qquad n \otimes m \mapsto (-1)^{|n|} n \otimes m\label{eq:shift_iso_b}
\end{align}
satisfy the conditions in Appendix \ref{section:constructing_superbicategories} and therefore give the bicategory of $\mathbb{Z}_2$-graded algebras and bimodules the structure of a superbicategory.
\end{example}

\begin{definition}\label{defn:laxsuperfunctor} Given two superbicategories $\cat{B}, \cat{C}$ a \emph{lax superfunctor} $\Phi: \cat{B} \lto \cat{C}$ is a lax functor together with, for each object $a$ in $\cat{B}$, a $2$-morphism
\[
\kappa_a: \Psi_{\Phi a} \lto \Phi( \Psi_a )
\]
satisfying two coherence conditions:
\begin{itemize}
\item[(a)] for all $a$, commutativity of
\[
\xymatrix@C+2pc{
\Psi_{\Phi a} \Psi_{\Phi a} \ar[dd]_-{\xi}\ar[r]^-{\kappa * \kappa} & \Phi( \Psi_a ) \Phi( \Psi_a ) \ar[d]\\
& \Phi( \Psi_a \Psi_a ) \ar[d]^-{\Phi(\xi)}\\
\Delta_{\Phi a} \ar[r] & \Phi( \Delta_a )
}
\]
\item[(b)] for each $1$-morphism $X: a \lto b$ in $\cat{B}$, commutativity of
\[
\xymatrix@C+2pc{
\Phi(X) \Psi_{\Phi a} \ar[d]_-{\gamma} \ar[r]^-{1 * \kappa} & \Phi(X) \Phi(\Psi_a) \ar[r] & \Phi( X \Psi_a ) \ar[d]^-{\Phi(\gamma)}\\
\Psi_{\Phi b} \Phi(X) \ar[r]_-{\kappa * 1} & \Phi(\Psi_b) \Phi(X) \ar[r] & \Phi( \Psi_b X)
}
\]
\end{itemize}
A \emph{strong superfunctor} is a lax superfunctor with $\kappa_a$ an isomorphism for every object $a$.
\end{definition}

\subsection{The superbicategory of Landau-Ginzburg models}\label{section:superbicatLG}

A polynomial $W \in k[x_1,\ldots,x_n]$ is a \emph{potential} if it satisfies the three conditions set out in \cite[Section 2.2]{lgdual}, the two most important being that the partial derivatives $\partial_{x_1} W, \ldots, \partial_{x_n} W$ form a quasi-regular sequence in $k[x]$ and that the quotient $J_W = k[x]/(\partial_{x_1} W, \ldots, \partial_{x_n} W)$ is a finitely generated free $k$-module. Typical examples are the ADE singularities \cite[I \S 2.4]{greuel} of which the simplest are the $A_N$-singularities $W_{A_N} = x_1^{N+1} + x_2^2 + \cdots + x_n^2$ .

A \emph{matrix factorisation} of $W$ over $k[x]$ is a $\mathbb{Z}_2$-graded free $k[x]$-module $X = X^0 \oplus X^1$ together with an odd operator (the differential) $d_X: X \lto X$ satisfying $d_X^2 = W \cdot 1_X$. A matrix factorisation $(X,d_X)$ is \emph{finite rank} if the underlying free module is finitely generated. In this case we may, after choosing a homogeneous basis, write
\[
d_X = \begin{pmatrix} 0 & d_X^1 \\ d_X^0 & 0 \end{pmatrix}
\]
for matrices of polynomials $d_X^0, d_X^1$. Morphisms of matrix factorisations are degree zero $k[x]$-linear maps which commute with the differentials. There is a homotopy relation on morphisms, and the \emph{homotopy category} $\HMF(k[x],W)$ of matrix factorisations has as objects matrix factorisations and as morphisms the equivalence classes of morphisms of matrix factorisations up to homotopy. By $\hmf(k[x],W)$ we denote the full subcategory of finite rank matrix factorisations. For background we refer to \cite{yoshino98}.

By $\hmf(k[x],W)^{\oplus}$ we denote the full subcategory of $\HMF(k[x],W)$ consisting of matrix factorisations which are direct summands (in the homotopy category) of finite rank matrix factorisations. Since $\HMF(k[x],W)$ is idempotent complete, there is an equivalence (see Remark \ref{remark:idempotent_completion} for the notation)
\be\label{eq:omegavsoplus}
\hmf(k[x],W)^\omega \lto \hmf(k[x],W)^{\oplus}
\ee
sending a pair $(X,e)$ to an infinite rank matrix factorisation splitting the idempotent $e$.

If $(X,d_X)$ is a matrix factorisation then so is $(X[1], -d_X)$, and this defines a functor $\Psi = (-)[1]$ on the homotopy category of matrix factorisations. Together with the identity $\xi: \Psi^2 \lto 1$ this makes $\HMF(k[x],W)$ and $\hmf(k[x],W)$ into supercategories.

The bicategory of Landau-Ginzburg models $\LG = \LG_k$ has for its objects pairs $(x,W)$ consisting of an ordered set of variables $x = (x_1,\ldots,x_n)$ and a potential $W \in k[x] = k[x_1,\ldots,x_n]$. Given potentials $W \in k[x]$ and $V \in k[y]$ the supercategory $\cat{LG}(W,V)$ is
\[
\cat{LG}(W,V) = \hmf(k[x,y], V - W)^{\oplus}\,.
\]
Given a third potential $U \in k[z]$ the composition law is a functor
\[
\LG(W,V) \otimes_k \LG(V, U) \lto \LG(W, U)
\]
which sends a pair of $1$-morphisms $X: W \lto V$ and $Y: V \lto U$ to the tensor product
\begin{equation}\label{eq:tensor_comp}
Y \otimes X = ( Y \otimes_{k[y]} X, d_{Y \otimes X} = d_Y \otimes 1 + 1 \otimes d_X )\,.
\end{equation}
Thus $Y \circ X := Y \otimes X$. This statement requires some care: $k[x,y,z]$ is an infinite rank free module over $k[x,z]$ and so $Y \otimes X$ is an infinite rank matrix factorisation of $U - W$ over $k[x,z]$. However one can prove that $Y \otimes X$ is a direct summand of a finite rank matrix factorisation \cite{dm1102.2957} and therefore a valid object in $\LG(W,U)$.

Let $W \in k[x_1,\ldots,x_n]$ be a potential and $W(x')$ the same polynomial but in a second set of variables $x_1',\ldots,x_n'$. Using formal symbols $\Theta_i$ of odd degree, we define the $k[x,x']$-module
\[
\Delta_W = \bigwedge \big( \bigoplus_{i=1}^n k[x,x'] \cdot \Theta_i \big)\,.
\]
Equipped with a certain differential (see \cite[(2.15)]{lgdual}) this is a matrix factorisation of $W(x) - W(x')$ and defines the unit in $\cat{LG}(W,W)$ for composition of $1$-morphisms. For the associator and unitor maps $\rho: X \otimes \Delta_W \lto X$ and $\lambda: \Delta_V \otimes X \lto X$ we also refer to \emph{ibid.}

\begin{lemma} $\LG$ is a superbicategory.
\end{lemma}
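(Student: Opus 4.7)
The plan is to apply the construction of Appendix \ref{section:constructing_superbicategories}, which builds a superbicategory from a bicategory whose hom-categories are equipped with compatible supercategory structures together with shift isomorphisms analogous to \eqref{eq:shift_iso_a}-\eqref{eq:shift_iso_b}. The bicategory structure on $\LG$ is established in \cite{lgdual}, and each hom-supercategory $\LG(W,V)$ with shift functor $\Psi = (-)[1]$ and $\xi = 1$ has been described in the text immediately above. It therefore remains to exhibit, for each composable pair of $1$-morphisms $X: W \lto V$ and $Y: V \lto U$, natural isomorphisms of matrix factorisations
\[
\tau_1: Y[1] \otimes X \lto (Y \otimes X)[1], \qquad \tau_2: Y \otimes X[1] \lto (Y \otimes X)[1]
\]
and to verify the coherence axioms demanded by the appendix.

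The maps $\tau_1$ and $\tau_2$ are forced by the bimodule template of Example \ref{example:bicategory_m}: take $\tau_1$ to be the identity on underlying bigraded $k[x,z]$-modules, and $\tau_2$ to act on a homogeneous element by $y \otimes x \mapsto (-1)^{|y|} y \otimes x$. Checking that each commutes with the differential is a standard Koszul-sign computation starting from $d_{Y[1]} = -d_Y$, $d_{X[1]} = -d_X$ and the formula $d_{Y \otimes X} = d_Y \otimes 1 + 1 \otimes d_X$. The only real content of the proof is then to verify that $\tau_1, \tau_2$ are compatible with the associator $\alpha$ of $\LG$ and satisfy the expected Koszul compatibility between them on $Y[1] \otimes X[1]$.

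Both verifications proceed formally as in Example \ref{example:bicategory_m}, since the associator in $\LG$ is inherited from the associator for the ordinary tensor product of modules and all the relevant signs are fixed by the standard conventions for $\mathbb{Z}_2$-graded tensor products. The main obstacle is therefore nothing conceptual: it is purely sign bookkeeping, requiring one to match the conventions used in the definition of $d_{Y \otimes X}$ and of the shift functor with those tacitly present in \cite{lgdual}. Once that is done, the lemma follows mechanically from the appendix construction.
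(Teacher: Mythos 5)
Your proposal takes the same route as the paper: invoke the construction of Appendix~\ref{section:constructing_superbicategories} with $\Psi_W = \Delta_W[1]$, use the shift isomorphisms of \eqref{eq:shift_iso_a}--\eqref{eq:shift_iso_b} as the maps $\tau$, and reduce the remaining work to Koszul-sign coherence checks. This matches the paper's (one-line) argument, just with the definitions of $\tau_1,\tau_2$ spelled out a bit more explicitly.
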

\begin{proof}
The categories $\LG(W,V)$ are all naturally supercategories, and the same isomorphisms as in \eqref{eq:shift_iso_a}, \eqref{eq:shift_iso_b} define the necessary natural isomorphisms $\tau$ to equip $\LG$ with the structure of a superbicategory with $\Psi_W = \Delta_W[1]$, using Appendix \ref{section:constructing_superbicategories}.
\end{proof}

The superbicategory structure on $\LG$ is used implicitly in \cite[Section 7]{lgdual}.

\subsection{Partial derivatives as homotopies}\label{section:partial}

Let $R = k[x_1,\ldots,x_n]$ be a polynomial ring and $X$ a finite rank matrix factorisation of $W$ over $R$. Let us fix a homogeneous basis for $X$ so that we may identify $d_X$ with a matrix, and differentiate this matrix entry-wise to obtain the matrix $\lambda_i = \partial_{x_i}(d_X)$. It is easy to check that

\begin{lemma}\label{lemma:homotopy_indept} The odd $R$-linear operator $\lambda_i$ on $X$ is independent, up to homotopy, of the choice of basis.
\end{lemma}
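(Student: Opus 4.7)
The plan is to check the statement directly by computing how $\lambda_i$ transforms under an arbitrary change of homogeneous basis, which is recorded by an even $R$-linear automorphism $g$ of $X$. If $D$ denotes the matrix of $d_X$ in the first basis, then its matrix in the second basis is $D' = g^{-1} D g$, and the two prescriptions produce matrices $\lambda_i = \partial_{x_i}(D)$ and $\lambda_i' = \partial_{x_i}(D')$ for odd $R$-linear operators $\Lambda_i, \Lambda_i'$ on $X$.

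To compare $\Lambda_i$ with $\Lambda_i'$ as operators on $X$, the first step is to apply $\partial_{x_i}$ to $D' = g^{-1} D g$ via the Leibniz rule, and then conjugate by $g$ to express everything in the first basis (since the matrix in the first basis of the operator with matrix $\partial_{x_i}(D')$ in the second basis is $g\, \partial_{x_i}(D')\, g^{-1}$). Using the identity $\partial_{x_i}(g^{-1}) = -g^{-1} \partial_{x_i}(g)\, g^{-1}$, obtained by differentiating $g^{-1} g = 1$, to rewrite the resulting expression, I expect to arrive at
\[
g\, \partial_{x_i}(D')\, g^{-1} - \partial_{x_i}(D) \;=\; d_X\, h - h\, d_X, \qquad h := \partial_{x_i}(g)\, g^{-1}.
\]
Here $h$ is an even $R$-linear endomorphism of $X$, since $g$ and $\partial_{x_i}(g)$ are even and commute with the $R$-action. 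The right-hand side is the graded commutator $[d_X, h]$ (with no sign, as $|h| = 0$), which is precisely the boundary of $h$ with respect to the differential $[d_X, -]$ on endomorphisms of $X$.

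Hence the two odd operators $\Lambda_i, \Lambda_i'$ differ by a null-homotopy, which is the desired statement; since any two homogeneous bases are related by such a $g$, this single computation covers all cases. I do not anticipate any substantive obstacle beyond careful bookkeeping: the change-of-basis matrix must be even (degree-zero) so that the graded Leibniz rule specialises to the ordinary one without extra signs, and one should verify that the $h$ produced by the calculation is genuinely the matrix of an $R$-linear operator and not merely a symbolic expression, which is automatic since $g$ is $R$-linear.
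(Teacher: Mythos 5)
Your computation is correct and complete. Writing the change of basis as an even $R$-linear automorphism $g$, differentiating $D' = g^{-1} D g$ entrywise and conjugating back to the first basis does give
\[
g\,\partial_{x_i}(D')\,g^{-1} - \partial_{x_i}(D) = D h - h D = [d_X, h], \qquad h = \partial_{x_i}(g)\,g^{-1},
\]
with $h$ an even $R$-linear endomorphism, so the two operators differ by a boundary for the differential $[d_X,-]$ on $\End_R(X)$, i.e. by a null-homotopy. The paper itself gives no proof for this lemma (it is flagged as "easy to check"), but your direct change-of-basis argument is precisely the intended one; nothing is missing.
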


\begin{lemma} As operators on $X$ we have $[ d_X, \lambda_i ] = \partial_{x_i} W \cdot 1_X$.
\end{lemma}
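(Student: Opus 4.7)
The statement is an immediate consequence of differentiating the defining identity $d_X^2 = W \cdot 1_X$ with respect to $x_i$, together with the convention that the bracket of odd operators is the anticommutator. So the plan is essentially a one-line computation, with a brief remark on how to make it rigorous.

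First I would fix a homogeneous basis of $X$, as in the paragraph preceding Lemma~\ref{lemma:homotopy_indept}, so that both $d_X$ and $\lambda_i = \partial_{x_i}(d_X)$ are honest matrices with entries in $R$. In this basis the equation $d_X^2 = W \cdot 1_X$ becomes the matrix identity $d_X \cdot d_X = W \cdot I$. Since entrywise partial differentiation satisfies the Leibniz rule with respect to matrix multiplication, applying $\partial_{x_i}$ to both sides yields
\[
\partial_{x_i}(d_X) \cdot d_X + d_X \cdot \partial_{x_i}(d_X) = \partial_{x_i}(W) \cdot I,
\]
that is $\lambda_i d_X + d_X \lambda_i = \partial_{x_i} W \cdot 1_X$. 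Since $d_X$ and $\lambda_i$ are both odd operators, the graded commutator $[d_X, \lambda_i]$ is by definition the anticommutator $d_X \lambda_i + \lambda_i d_X$, and the claim follows.

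The only point worth a second thought is that $\lambda_i$ is only well-defined up to homotopy (Lemma~\ref{lemma:homotopy_indept}), so one should be careful about what "equality of operators" means. However, the identity we have just derived is a genuine equality in the chosen basis, not merely a statement up to homotopy, and since any other representative of $\lambda_i$ differs by a null-homotopic operator $h$ with $[d_X, h] = 0$ in the graded sense, the equality $[d_X, \lambda_i] = \partial_{x_i} W \cdot 1_X$ is preserved. So there is no real obstacle: the proof is a direct calculation.
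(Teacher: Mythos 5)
Your proof is correct and is essentially identical to the paper's: both differentiate the matrix identity $d_X d_X = W \cdot 1_X$ entrywise via the Leibniz rule and read off the anticommutator. The extra remark about basis-independence is harmless (and correct, since $[d_X,[d_X,g]]=0$ because $d_X^2=W$ is central), but the lemma is a genuine operator identity for any fixed choice of basis, so this point is not really needed.
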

\begin{proof}
Applying the Leibniz rule to the equation $d_X d_X = W$ we obtain
\be\label{eq:partials_are_homotopies}
d_X \partial_{x_i}(d_X) + \partial_{x_i}(d_X) d_X = \partial_{x_i} W
\ee
as claimed.
\end{proof}

It follows that any element in the Jacobi ideal $( \partial_{x_1}W , \ldots, \partial_{x_n} W )$ acts null-homotopically on any morphism in the homotopy category $\hmf(R,W)$. Moreover these homotopies $\lambda_i$ are central, in the following sense:

\begin{lemma}\label{lemma:naturalityoflambda} Let $\phi: X \lto X'$ be a morphism of matrix factorisations and write $\lambda_i = \partial_{x_i}(d_X)$ and $\lambda'_i = \partial_{x_i}(d_{X'})$. There is a homotopy $\phi \lambda_i \simeq \lambda'_i \phi$.
\end{lemma}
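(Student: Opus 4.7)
The plan is to mimic the proof of the preceding lemma (that $[d_X,\lambda_i] = \partial_{x_i}W$) by differentiating the defining equation of $\phi$. Fix homogeneous bases for $X$ and $X'$ so that $\phi$, $d_X$, $d_{X'}$ are matrices with entries in $R$, and set $h := \partial_{x_i}(\phi)$, the matrix obtained by differentiating $\phi$ entrywise. This is a degree zero $R$-linear map $X \to X'$, and it is the candidate homotopy.

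Since $\phi$ is a morphism of matrix factorisations, $d_{X'} \phi = \phi\, d_X$. Applying $\partial_{x_i}$ entrywise and using the Leibniz rule gives
\[
\lambda'_i \phi + d_{X'}\, \partial_{x_i}(\phi) \;=\; \partial_{x_i}(\phi)\, d_X + \phi\, \lambda_i,
\]
which rearranges to
\[
\phi \lambda_i - \lambda'_i \phi \;=\; d_{X'} h - h\, d_X.
\]
With our sign conventions on the Hom-complex of matrix factorisations (where the differential on a degree zero map $h$ is $D(h) = d_{X'} h - h\, d_X$) this is exactly the null-homotopy statement $\phi \lambda_i \simeq \lambda'_i \phi$.

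There is essentially no obstacle: the calculation is one application of the Leibniz rule. The only thing worth flagging is that the specific homotopy $h = \partial_{x_i}(\phi)$ depends on the chosen bases, but the statement of the lemma only asserts the existence of a homotopy, so by (the analogue of) Lemma~\ref{lemma:homotopy_indept} this dependence is harmless; alternatively, a different choice of basis alters $h$ by a null-homotopic term. This matches the pattern of Subsection~\ref{section:partial}: the partial-derivative construction always upgrades $R$-linear identities to chain-level identities, while the resulting chain-level data is well-defined only up to homotopy.
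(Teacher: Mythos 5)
Your proposal is correct and is essentially identical to the paper's proof: both differentiate the intertwining relation $d_{X'}\phi = \phi\, d_X$, apply the Leibniz rule, and rearrange to exhibit $\partial_{x_i}(\phi)$ as the required homotopy. The extra remarks about basis-dependence and sign conventions are accurate but not needed for the argument.
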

\begin{proof}
From $d_{X'} \phi = \phi d_X$ we deduce
\[
\partial_{x_i}( d_{X'} ) \phi + d_{X'} \partial_{x_i}( \phi ) = \partial_{x_i}( \phi ) d_X + \phi \partial_{x_i}( d_X )
\]
and hence
\[
\partial_{x_i}( d_{X'} ) \phi - \phi \partial_{x_i}( d_X ) = \partial_{x_i}(\phi) d_X - d_{X'} \partial_{x_i}(\phi)
\]
as claimed.
\end{proof}

\begin{lemma}\label{lemma:commutator_of_lambdas} There is a homotopy $[ \lambda_i, \lambda_j ] \simeq \partial_{x_i} \partial_{x_j}(W) \cdot 1_X$.
\end{lemma}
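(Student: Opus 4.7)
The plan is to differentiate equation \eqref{eq:partials_are_homotopies} with respect to a second variable and read off the anticommutator of the $\lambda$'s, modulo a boundary term that serves as the homotopy. Concretely, I would start from
\[
d_X \lambda_i + \lambda_i d_X = \partial_{x_i} W \cdot 1_X
\]
and apply $\partial_{x_j}$ to both sides. Since partial differentiation with respect to $x_j$ is a degree-zero derivation on $R$-linear operators expressed as matrices, the Leibniz rule gives
\[
\partial_{x_j}(d_X) \lambda_i + d_X \partial_{x_j}(\lambda_i) + \partial_{x_j}(\lambda_i) d_X + \lambda_i \partial_{x_j}(d_X) = \partial_{x_i x_j} W \cdot 1_X.
\]
Recognising $\partial_{x_j}(d_X) = \lambda_j$ and $\partial_{x_j}(\lambda_i) = \partial_{x_i x_j}(d_X)$, and noting that both $\lambda_i$ and $\lambda_j$ are odd (so that $[\lambda_i, \lambda_j]$ means $\lambda_i \lambda_j + \lambda_j \lambda_i$), this rearranges to
\[
[\lambda_i, \lambda_j] = \partial_{x_i x_j} W \cdot 1_X - \bigl[ d_X, \partial_{x_i x_j}(d_X) \bigr],
\]
where the last bracket, involving the odd operator $d_X$ and the odd operator $\partial_{x_i x_j}(d_X)$, is the supercommutator $d_X \partial_{x_i x_j}(d_X) + \partial_{x_i x_j}(d_X) d_X$. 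This exhibits $\partial_{x_i x_j}(d_X)$ as the required null-homotopy for $[\lambda_i, \lambda_j] - \partial_{x_i x_j}(W) \cdot 1_X$, giving the claim.

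A small caveat to address explicitly is the choice of homogeneous basis for $X$ used to make sense of entry-wise differentiation: by Lemma~\ref{lemma:homotopy_indept} the operator $\lambda_i$ is well-defined up to homotopy, and the same argument shows $\partial_{x_i x_j}(d_X)$ is well-defined up to homotopy, so the identity above lives coherently in the homotopy category. There is no real obstacle beyond bookkeeping the four terms produced by Leibniz and identifying two of them as $\{\lambda_i, \lambda_j\}$ and the other two as a $d_X$-supercommutator; the signs are automatic because no odd symbols are permuted past each other when applying $\partial_{x_j}$.
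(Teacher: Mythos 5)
Your proof is correct and follows essentially the same route as the paper: differentiate \eqref{eq:partials_are_homotopies} with respect to $x_j$, apply the Leibniz rule to get four terms, group two of them as $\{\lambda_i,\lambda_j\}$ and the other two as the supercommutator $[d_X, \partial_{x_i x_j}(d_X)]$, and conclude. The only cosmetic difference is that the paper writes the homotopy as $[d, -\partial_{x_jx_i}(d_X)]$ where you write $-[d_X, \partial_{x_i x_j}(d_X)]$, which are identical.
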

\begin{proof}
Differentiating \eqref{eq:partials_are_homotopies} with respect to $j$ leaves
\[
\partial_{x_j}(d_X) \partial_{x_i}(d_X) + d_X \partial_{x_j} \partial_{x_i}(d_X) + \partial_{x_j} \partial_{x_i}(d_X) d_X + \partial_{x_i}(d_X) \partial_{x_j}(d_X) = \partial_{x_i} \partial_{x_j}W
\]
from which we read off
\[
\lambda_i \lambda_j + \lambda_j \lambda_i = \partial_{x_i} \partial_{x_j}W + [d, -\partial_{x_j} \partial_{x_i}(d_X)]\,.
\]
\end{proof}

\subsection{Clifford algebras}\label{section:clifford_algs}

The cut operation in $\L$ produces objects which are representations of Clifford algebras. We briefly recall the basic theory of these algebras and their modules; for a full discussion see \cite{friedrich}. For $n \ge 0$ the Clifford algebra $C_n$ is the associative $\mathbb{Z}_2$-graded $k$-algebra generated by odd elements $\ferm_1,\ldots,\ferm_n$ and $\fermc_1, \ldots, \fermc_n$ subject to the following relations
\be\label{eq:clifford_relations}
[\ferm_i, \ferm_j] = 0, \qquad [\fermc_i, \fermc_j] = 0, \qquad [\ferm_i, \fermc_j] = \delta_{ij}
\ee
for all $1 \le i, j \le n$. In this paper all our commutators are graded, $[a,b] = ab - (-1)^{|a||b|} ba$. We set $C_0 = k$. These Clifford algebras have essentially only one nontrivial representation, sometimes called the \emph{spinor representation} \cite[p.14]{friedrich}, defined as follows:

\begin{definition} We use the $\nZ_2$-graded $k$-module
\be
F_n = k \theta_1 \oplus \cdots \oplus k \theta_n
\ee
where the $\theta_i$ are formal variables of odd degree, and set
\be
S_n = \bigwedge F_n = \bigwedge\big( k \theta_1 \oplus \cdots \oplus k \theta_n \big)\,.
\ee
\end{definition}

\begin{definition}\label{defn:contraction} Left multiplication in the exterior algebra defines an odd operator
\begin{gather*}
\theta_i \wedge (-): S_n \lto S_n\,,\\
\theta_{j_1} \cdots \theta_{j_r} \mapsto \theta_i \theta_{j_1} \cdots \theta_{j_r}\,.
\end{gather*}
Contraction from the left defines an odd operator
\begin{gather*}
\theta_i^*\, \lrcorner\, (-): S_n \lto S_n\,,\\
\theta_{j_1} \cdots \theta_{j_r} \mapsto \sum_{l=1}^r (-1)^{(l-1)} \delta_{i,j_l} \theta_{j_1} \cdots \widehat{ \theta_{j_l} } \cdots \theta_{j_r}\,.
\end{gather*}
\end{definition}

Usually we write simply $\theta_i$ for the operator $\theta_i \wedge (-)$ and $\theta_i^*$ for the operator $\theta_i^*\, \lrcorner\, (-)$. The spinor representation is defined by the next lemma, which also shows that the algebras $C_n$ are Morita trivial in the sense that they are isomorphic to the endomorphism algebras of a finite rank free $\mathbb{Z}_2$-graded $k$-module.

\begin{lemma}\label{defn:spinorrep} The map $C_n \lto \End_k(S_n)$ defined by
\[
\fermc_i \mapsto \theta_i \wedge (-), \qquad \ferm_i \mapsto \theta_i^*\, \lrcorner\, (-)
\]
is an isomorphism of $\mathbb{Z}_2$-graded $k$-algebras.
\end{lemma}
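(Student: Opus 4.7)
The plan is to verify that the proposed assignment respects the defining relations of $C_n$, so giving a well-defined graded algebra map $\rho\colon C_n\lto\End_k(S_n)$, and then to establish bijectivity via a rank count combined with a direct argument.

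First I would check that $\theta_i\wedge(-)$ and $\theta_i^*\lrcorner(-)$ are odd operators on $S_n$ satisfying the Clifford relations \eqref{eq:clifford_relations}. The identity $\theta_i\theta_j+\theta_j\theta_i=0$ is immediate from antisymmetry of the wedge, and a short computation from Definition \ref{defn:contraction} gives $\theta_i^*\theta_j^*+\theta_j^*\theta_i^*=0$ by swapping the two contractions and tracking signs. The mixed relation $\theta_i^*\theta_j+\theta_j\theta_i^*=\delta_{ij}\cdot\mathrm{id}$ is the standard Koszul calculation: evaluate on a basis monomial $\theta_{k_1}\cdots\theta_{k_r}$, split into the cases $j\in\{k_1,\ldots,k_r\}$ and $j\notin\{k_1,\ldots,k_r\}$, and verify that the signs from wedging and contracting telescope to give exactly $\delta_{ij}$ times the identity. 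This step produces the desired homomorphism.

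Next I would count ranks on both sides. Since $F_n$ is free of rank $n$, the exterior algebra $S_n=\bigwedge F_n$ is free of rank $2^n$, so $\End_k(S_n)$ is free of rank $4^n$. On the Clifford side, using the relations \eqref{eq:clifford_relations} to normal-order, every element of $C_n$ is a $k$-linear combination of the $4^n$ monomials $\fermc^I\ferm^J$ indexed by ordered subsets $I,J\subseteq\{1,\ldots,n\}$, so $C_n$ is generated by at most $4^n$ elements over $k$. It therefore suffices to show that the $4^n$ operators $\rho(\fermc^I\ferm^J)$ are $k$-linearly independent in $\End_k(S_n)$.

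For this final step I would induct on $n$. The base case $n=1$ is an immediate inspection: the images of $1,\fermc_1,\ferm_1,\fermc_1\ferm_1$ are readily seen to be a basis of $\End_k(S_1)\cong M_2(k)$ with respect to the basis $\{1,\theta_1\}$ of $S_1$. For the inductive step, the factorisations $S_n\cong S_{n-1}\otimes_k S_1$ and $C_n\cong C_{n-1}\mathbin{\widehat{\otimes}}C_1$ (as graded tensor products of superalgebras) intertwine the representations once the Koszul signs are correctly inserted, and the induction closes by combining the bijectivity in degree $n-1$ with the base case. Alternatively, one may evaluate $\rho(\fermc^I\ferm^J)$ directly on the basis $\{\theta_K\}_{K\subseteq\{1,\ldots,n\}}$ of $S_n$ and observe that the resulting matrix of coefficients, with rows indexed by pairs $(I,J)$ and columns indexed by $K$ and $I\cup(K\setminus J)$, is triangular with unit diagonal entries under a suitable ordering of subsets.

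The main obstacle I anticipate is the careful bookkeeping of signs coming from the $\mathbb{Z}_2$-grading, both in verifying the mixed Clifford relation $[\theta_i^*,\theta_j]=\delta_{ij}$ and in identifying $C_n\cong C_{n-1}\mathbin{\widehat{\otimes}}C_1$ and $S_n\cong S_{n-1}\otimes_k S_1$ as compatible structures; once these signs are set up correctly, everything else reduces to elementary linear algebra over $k$.
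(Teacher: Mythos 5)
The paper does not actually prove this lemma---it is stated as a standard fact (with \cite{friedrich} as the reference a few lines earlier), so there is no proof of record to compare against. Your argument is a complete and correct proof of the standard kind: verifying the Clifford relations for $\theta_i\wedge(-)$ and $\theta_i^*\lrcorner(-)$, observing that the $4^n$ normal-ordered monomials $\fermc^I\ferm^J$ span $C_n$ while $\End_k(S_n)$ is free of rank $4^n$, and then showing injectivity. Both of your two alternatives for the final step work; the tensor-product induction $S_n\cong S_{n-1}\otimes_k S_1$, $C_n\cong C_{n-1}\mathbin{\widehat{\otimes}}C_1$ is clean once the Koszul signs are set correctly.

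One small point of precision about your ``direct triangularity'' alternative: the matrix of coefficients you describe is not unitriangular under an arbitrary ordering of subsets. The right statement is that $\rho(\fermc^{I'}\ferm^{J'})$ has a nonzero matrix entry at position $(I,J)$ (i.e.\ sends $\theta_J$ to a nonzero multiple of $\theta_I$) only when $J'\subseteq J$ and $I = I'\cup(J\setminus J')$; the entry for $(I',J')=(I,J)$ is $\pm1$. Thus if one filters by $|J|$---i.e.\ shows $c_{I,\emptyset}=0$ for all $I$, then $c_{I,J}=0$ for $|J|=1$, and so on---the independence falls out, because each step isolates the coefficient $c_{I,J}$ modulo those with strictly smaller $J$. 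Stated this way the argument is watertight; as written, ``triangular with unit diagonal under a suitable ordering of subsets'' is vague about which ordering does the job. Since your tensor-product induction already closes the argument, this is only a remark on the alternative.

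Also, note that $C_n$ is free of rank exactly $4^n$ as a $k$-module, not just ``generated by at most $4^n$ elements''; linear independence of the $\fermc^I\ferm^J$ in $C_n$ itself is a consequence of the injectivity you establish (or can be proven by the same filtration argument at the level of $C_n$), but you only need the spanning bound for the surjectivity half, which is what you use. This is fine; just be aware of the logical order.
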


In particular this means that every $\mathbb{Z}_2$-graded $C_n$-module is isomorphic to $S_n \otimes_k V$ for some $\mathbb{Z}_2$-graded $k$-module $V$. Using the isomorphism of $k$-modules
\[
F_m \oplus F_n \cong F_{m+n}
\]
which maps the ordered basis $\theta_1,\ldots,\theta_m$ of $F_m$ to the first $m$ basis elements of $F_{m+n}$ in their given order, and the ordered basis of $F_n$ to the last $n$ basis elements, we define
\[
S_m \otimes_k S_n = \bigwedge F_m \otimes_k \bigwedge F_n \cong \bigwedge F_{m+n} = S_{m+n}\,.
\]
From this we deduce an isomorphism of algebras
\begin{equation}\label{eq:algebra_A_additive}
C_m \otimes_k C_n \cong C_{m+n}\,.
\end{equation}
Given $m, n \ge 0$ we introduce notation for the $C_m$-$C_n$-bimodule
\begin{equation}
S_{m,n} = S_m \otimes_k S_n^* \cong \Hom_k(S_n,S_m)
\end{equation}
where $S_n^* = \Hom_k(S_n, k)$. There is an isomorphism of $C_l$-$C_n$-bimodules
\begin{equation}\label{eq:isosbimodule}
S_{l,m} \otimes_{C_m} S_{m,n} = S_l \otimes_k S_m^* \otimes_{C_m} S_m \otimes_k S_n^* \cong S_{l,n}\,.
\end{equation}

\begin{definition}\label{defn:idempotent_e} For $n \ge 0$ let $e_n \in C_n$ denote the element
\[
e_n = \ferm_1 \cdots \ferm_n \fermc_n \cdots \fermc_1\,.
\]
This is the idempotent corresponding to the summand $k \cdot 1$ of $S_n$.
\end{definition}

\begin{definition}\label{defn:idempotent_gen} More generally, given $m, n \ge 0$ consider the $k$-linear map
\[
\xymatrix{
S_n \ar[r] & k \cdot 1 \ar[r]^{\operatorname{inc}} & S_m
}
\]
given by the projection to $k \cdot 1$ followed by the inclusion into $S_m$. This defines an element
\[
\iota_{m,n} \in \Hom_k(S_n, S_m) = S_m \otimes_k S_n^*\,.
\]
Then obviously $\iota_{n,n} = e_n$.
\end{definition}

\begin{remark} Recall that given a finite rank free $k$-module $V$ and a symmetric bilinear form $B: V \otimes_k V \lto k$ the associated Clifford algebra $\operatorname{Cl}(V, B)$ is the associative $k$-algebra generated by the elements of $V$ subject to the relations
\[
vw + wv = B(v,w) \cdot 1\,.
\]
This is a $\mathbb{Z}_2$-graded $k$-algebra when we assign $|v| = 1$ for all $v \in V$ and $|1| = 0$. If we take $F = k^n, V = F \oplus F^*$ with the bilinear form $B$ under which $B(F,F) = 0, B(F^*, F^*) = 0$ and for $x \in F$ and $\nu \in F^*$, $B(x, \nu) = \frac{1}{2} \nu(x)$, then the algebra $C_n$ defined above is the Clifford algebra of the pair $(V,B)$.
\end{remark}

\subsection{The Clifford thickening}\label{section:cliff_thick}

Let $\cat{T}$ be a small idempotent complete supercategory. We construct a new supercategory $\cat{T}^\bullet$ called the \emph{Clifford thickening} in which the objects are pairs $(X,n)$ of an integer $n \ge 0$ and a left $C_n$-module $X$ in $\cat{T}$. Recall that if $A$ is a $\mathbb{Z}_2$-graded $k$-algebra then $\cat{T}_A$ denotes the supercategory of $A$-modules in $\cat{T}$ with $A$-linear maps (see Definition \ref{defn:algebra_modules}).


\begin{definition} Let $\cat{M}$ denote the superbicategory of Morita trivial $\mathbb{Z}_2$-graded $k$-algebras. The $1$-morphisms are $\mathbb{Z}_2$-graded bimodules which are finitely generated and projective over $k$ and the $2$-morphisms are degree zero bimodule maps (see Example \ref{example:bicategory_m}).
\end{definition}

We will need to tensor objects of $\cat{T}$ with $k$-modules, which is recalled in Appendix \ref{section:tensorproduct_supcat}. 

\begin{proposition} The assignment of the supercategory $\cat{T}_A$ to an algebra $A$ and of the superfunctor $\Phi_V = V \otimes_A (-)$ to a $B$-$A$-bimodule $V$ determines a strong superfunctor
\[
\Phi^{\cat{T}}: \cat{M} \lto \Cat^{\operatorname{sup}}_k
\]
to the superbicategory of small supercategories and superfunctors.
\end{proposition}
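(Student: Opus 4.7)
The plan is to assemble $\Phi^{\cat{T}}$ from four pieces---the assignments on objects, $1$-morphisms, and $2$-morphisms, together with the coherence $2$-cells $\kappa_A$---and then check the standard lax-functor axioms along with the two axioms (a), (b) of Definition \ref{defn:laxsuperfunctor}. On objects I set $\Phi^{\cat{T}}(A) = \cat{T}_A$. For a $B$-$A$-bimodule $V$ which is finitely generated projective over $k$, I define $\Phi_V = V \otimes_A (-) : \cat{T}_A \to \cat{T}_B$ using the module structure of Appendix \ref{section:tensorproduct_supcat}; the required natural isomorphism $\Phi_V \Psi \cong \Psi \Phi_V$ making $\Phi_V$ a superfunctor is induced by \eqref{eq:shift_iso_b}. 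On a bimodule map $f: V \to V'$ I take the obvious supernatural transformation $f \otimes \operatorname{id}$.

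For the strong functor structure, the compositor $\Phi_V \circ \Phi_W \Rightarrow \Phi_{V \otimes_B W}$ is the natural associativity isomorphism $V \otimes_B (W \otimes_A X) \cong (V \otimes_B W) \otimes_A X$, and the unit isomorphism at $A$ is $1_{\cat{T}_A} \Rightarrow \Phi_A$ given by $X \cong A \otimes_A X$; both are natural isomorphisms, so the underlying lax functor is strong. For the superfunctor datum, $\Psi$ at $A \in \cat{M}$ is the shift bimodule $A[1]$ while $\Psi$ at $\cat{T}_A$ is the shift functor $(-)[1]$ inherited from $\cat{T}$, and I would take $\kappa_A : (-)[1] \Rightarrow A[1] \otimes_A (-)$ to be the natural isomorphism $X[1] \cong A[1] \otimes_A X$ of \eqref{eq:shift_iso_a}, which is an isomorphism, ensuring strongness of the superfunctor part as well.

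The lax-functor pentagon and triangle axioms are the classical coherence of relative tensor products. Axiom (a) of Definition \ref{defn:laxsuperfunctor} reduces, after expanding $\kappa_A \ast \kappa_A$ and the compositor, to an identity on $A[1] \otimes_A A[1] \otimes_A X \cong X[2]$, where the two sides differ only by the Koszul sign governing $\xi$ on $\cat{M}$ from Example \ref{example:bicategory_m}. Axiom (b), for a bimodule $V: A \to B$, is the commutativity of a square comparing the two natural isomorphisms that describe $V \otimes_A X[1]$ (shifting first on the right factor versus first on the left), and is precisely the sign identity between \eqref{eq:shift_iso_a} and \eqref{eq:shift_iso_b} used in Example \ref{example:bicategory_m} to promote $\cat{M}$ to a superbicategory---now applied to a general object $X \in \cat{T}_A$ rather than to a bimodule.

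The main obstacle is the sign bookkeeping for axioms (a) and (b): the shift on $V \otimes_A X$ can be performed on either factor, and the comparison requires inserting the Koszul sign of \eqref{eq:shift_iso_b}. One must verify that the choice of \eqref{eq:shift_iso_a} used to define $\kappa_A$ is compatible with the definition of $\xi$ on $\cat{M}$; once this convention is fixed, both coherence squares reduce to the identity and no further argument is needed.
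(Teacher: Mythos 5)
Your proposal is correct and follows the same route as the paper, whose entire proof is a single sentence citing Lemma \ref{lemma:assoctensor} for the compositor $\Phi_W \circ \Phi_V \cong \Phi_{W \otimes V}$ and declaring the remaining coherence diagrams straightforward. You have simply written out the omitted data (unitor, $\kappa_A$ via the shift bimodule $A[1]$, the superfunctor structure from \eqref{eq:shift_iso_b}) and located the Koszul sign checks, which is exactly the routine verification the paper waves away.
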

\begin{proof}
The isomorphism $\Phi_W \circ \Phi_V \cong \Phi_{W \otimes V}$ is given by Lemma \ref{lemma:assoctensor}, and apart from that the only checking that needs to be done are straightforward coherence diagrams.
\end{proof}

Next we construct a strong functor into $\cat{M}$ which picks out the Clifford algebras.

\begin{definition}
Let $\mathbb{N}$ denote the category of integers $n \ge 0$ with a unique morphism $\phi_{m,n}: n \lto m$ for each pair $m,n$. 
\end{definition}

We view $\mathbb{N}$ as a bicategory with only identity $2$-morphisms.

\begin{lemma} There is a strong functor $\mathbb{N} \lto \cat{M}$ defined by
\begin{gather*}
n \mapsto C_n = \End_k( S_n ),\\
\phi_{m,n} \mapsto S_{m,n} = S_m \otimes_k S_n^*\,.
\end{gather*}
\end{lemma}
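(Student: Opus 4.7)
Since $\mathbb{N}$ has a unique $1$-morphism between any pair of objects and only identity $2$-morphisms, the plan is to equip the object and $1$-morphism assignments with compositor and unitor $2$-isomorphisms (naturality being automatic) and then verify the pentagon and triangle coherence axioms.

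For the compositor, the composite $\phi_{l,m} \circ \phi_{m,n} = \phi_{l,n}$ in $\mathbb{N}$ calls for a $C_l$-$C_n$-bimodule isomorphism
\[
c_{l,m,n} : S_{l,m} \otimes_{C_m} S_{m,n} \lto S_{l,n},
\]
and I take this to be the isomorphism \eqref{eq:isosbimodule} already recorded; it is induced by the evaluation pairing $S_m^* \otimes_{C_m} S_m \cong k$, which is an isomorphism because $S_m$ is a progenerator witnessing the Morita equivalence between $k$ and $C_m = \End_k(S_m)$. Since $\phi_{n,n}$ is the identity $1$-morphism in $\mathbb{N}$, the unitor is the canonical isomorphism
\[
\iota_n : C_n \lto S_{n,n} = S_n \otimes_k S_n^*
\]
coming from $\End_k(V) \cong V \otimes_k V^*$ for a finite rank free $k$-module $V$.

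The pentagon axiom will compare the two parenthesisations of
\[
S_{l,m} \otimes_{C_m} S_{m,n} \otimes_{C_n} S_{n,p} \lto S_{l,p}.
\]
Under the definition $S_{i,j} = S_i \otimes_k S_j^*$, both composites amount to applying the two evaluation pairings $S_m^* \otimes_{C_m} S_m \to k$ and $S_n^* \otimes_{C_n} S_n \to k$; since these act on disjoint tensor factors, they commute. The right triangle axiom will require that
\[
S_{m,n} \otimes_{C_n} C_n \xrightarrow{1 \otimes \iota_n} S_{m,n} \otimes_{C_n} S_{n,n} \xrightarrow{c_{m,n,n}} S_{m,n}
\]
coincide with the canonical right unitor, which holds because $\iota_n$ is precisely the isomorphism under which the regular $C_n$-action on $S_n$ corresponds to the evaluation map $S_n \otimes_k S_n^* \otimes_{C_n} S_n \to S_n$; the left triangle is handled analogously.

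The main obstacle is merely the bookkeeping of signs and tensor factors in these coherence diagrams. Conceptually, the lemma records that the image of the proposed functor lives inside the Morita bicategory of $k$-algebras, where $S_{m,n} = \Hom_k(S_n, S_m)$ and the compositor is just composition of $k$-linear maps; because the latter is strictly associative and unital, the coherence of the bimodule isomorphisms is essentially forced.
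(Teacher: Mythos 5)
The paper states this lemma without proof, so there is no argument to compare against; your proposal supplies the missing details. It is correct: taking the compositor to be the isomorphism \eqref{eq:isosbimodule} (induced by the Morita evaluation $S_m^* \otimes_{C_m} S_m \cong k$), the unitor to be $\End_k(S_n) \cong S_n \otimes_k S_n^*$, observing that naturality is vacuous since $\mathbb{N}$ has only identity $2$-morphisms, and reducing the hexagon and triangle coherence axioms to the fact that the two evaluation pairings act on disjoint tensor factors (equivalently, to the strict associativity and unitality of composition under the identification $S_{m,n} \cong \Hom_k(S_n, S_m)$) is exactly the standard and correct argument.
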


The composite of these strong functors is a strong functor
\begin{equation}\label{eq:strongfunctorgroth}
\xymatrix@C+1pc{
\mathbb{N} \ar[r] & \cat{M} \ar[r]^-{\Phi^{\cat{T}}} & \Cat^{\operatorname{sup}}_k
}
\end{equation}
sending $n$ to the category of left $C_n$-modules in $\cat{T}$ and $\phi_{m,n}$ to the functor $S_{m,n} \otimes_{C_n} -$. There is a close connection between strong functors into the bicategory of small categories, and what are called cofibered categories, given by the Grothendieck construction \cite{vistoli}.

\begin{definition} The \emph{Clifford thickening} $\cat{T}^\bullet$ of the supercategory $\cat{T}$ is the category which results from the Grothendieck construction applied to the strong functor \eqref{eq:strongfunctorgroth}.
\end{definition}

More concretely $\cat{T}^\bullet$ is the category with objects all pairs $(X,n)$ of an integer $n \ge 0$ and an object $X \in \cat{T}$ together with a left $C_n$-action. A morphism $\alpha: (X,n) \lto (Y,m)$ is a morphism of $C_m$-modules $\alpha: S_{m,n} \otimes_{C_n} X \lto Y$. Composition of a pair of morphisms
\begin{equation}\label{eq:comp_chain0}
\xymatrix{
(X,n) \ar[r]^-{\alpha} & (Y,m) \ar[r]^-{\beta} & (Z,l)
}
\end{equation}
is given by the morphism of $C_l$-modules
\begin{equation}\label{eq:comp_chain1}
\xymatrix@C+1pc{
S_{l, n} \otimes_{C_n} X \ar[r]^-{\cong} & (S_{l,m} \otimes_{C_m} S_{m,n}) \otimes_{C_n} X \ar[d]^-{\cong}\\
& S_{l,m} \otimes_{C_m} ( S_{m,n} \otimes_{C_n} X ) \ar[r]^-{ 1 \otimes \alpha } & S_{l,m} \otimes_{C_m} Y \ar[r]^-{\beta} & Z\,.
}
\end{equation}
There is a canonical functor $p: \cat{T}^\bullet \lto \mathbb{N}$ defined by $p(X,n) = n$, which defines a cofibered category over $\mathbb{N}$ by the definition of the Grothendieck construction. This definition has the benefit of being derived from a general construction, but there is an alternative description of morphisms which is less cumbersome:

\begin{definition} Let $\cat{T}^{\star}$ be the category with the same objects as $\cat{T}^\bullet$ and
\begin{align*}
\cat{T}^{\star}( (X,n), (Y,m) ) &= \Big\{ \alpha \in \cat{T}(X,Y) \l \alpha \fermc_i = 0 \text{ for } 1 \le i \le n \text{ and }\\
&\qquad \qquad\qquad \qquad \;\; \ferm_i \alpha = 0 \text{ for } 1 \le i \le m \Big\}\,.
\end{align*}
Composition is as in $\cat{T}$, but the identity on $(X,n)$ in $\cat{T}^\star$ is the morphism
\[
1^*_{(X,n)} = e_n
\]
where $e_n$ is the action on $X$ of the canonical idempotent from Definition \ref{defn:idempotent_e}.
\end{definition}

\begin{lemma}\label{lemma:equivcliffordthick} $\cat{T}^{\star}$ is a category and there is an equivalence of categories
\[
\Theta: \cat{T}^\bullet \lto \cat{T}^{\star}
\]
\end{lemma}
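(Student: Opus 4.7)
\textit{Proof plan.} The plan is to define $\Theta$ as the identity on objects and to identify $\cat{T}^{\star}$ with $\cat{T}^{\bullet}$ by passing to the ``vacuum restriction.'' The governing observation is the right-$C_n$-module isomorphism $S_n^{*} \cong e_n C_n$, which follows from Lemma~\ref{defn:spinorrep} together with the fact that $e_n$ acts on $S_n$ as the rank-one projection onto $k\cdot 1$; this yields a natural identification
\[
S_{m,n} \otimes_{C_n} X \;=\; S_m \otimes_k S_n^{*} \otimes_{C_n} X \;\cong\; S_m \otimes_k e_n X,
\]
where $e_n X$ is the splitting of the idempotent $e_n \in \cat{T}(X,X)$ (which exists by idempotent completeness) and is characterised as $\bigcap_i \ker(\ferm_i)$.

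Under this identification a $C_m$-linear morphism $\alpha: S_m \otimes_k e_n X \to Y$ is determined by the restriction $\tilde{\alpha} = \alpha(1 \otimes -): e_n X \to Y$, and this restriction automatically lands in $e_m Y$ because $\ferm_j \cdot 1 = 0$ in $S_m$. I would set $\Theta(\alpha)$ to be the morphism $x \mapsto \alpha(1 \otimes e_n x)$ from $X$ to $Y$; the identities $e_n \fermc_i = 0$ (valid in $C_n$ since $\fermc_i^2 = 0$) and $\ferm_j \tilde{\alpha} = 0$ then give $\Theta(\alpha) \in \cat{T}^{\star}$. The inverse sends $\beta \in \cat{T}^{\star}((X,n),(Y,m))$ to the map $\theta_{i_1} \cdots \theta_{i_r} \otimes x \mapsto \fermc_{i_1}\cdots\fermc_{i_r}\,\beta(x)$, extended $k$-linearly. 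The nontrivial check is $\ferm_j$-linearity, for which one commutes $\ferm_j$ through the $\fermc_{i_l}$'s using $[\ferm_j, \fermc_{i_l}] = \delta_{j,i_l}$ and absorbs the tail via $\ferm_j\beta = 0$; the result matches left contraction by $\theta_j^{*}$ from Definition~\ref{defn:contraction}. Mutual inverseness is then immediate.

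Finally I would check that $\cat{T}^{\star}$ is a category and $\Theta$ a functor. The element $e_n$ satisfies both defining relations (since $\ferm_i^2 = \fermc_i^2 = 0$); to see it acts as the identity, note that $1 - e_n$ acts on $S_n$ as the complementary projection onto the span of the $\theta_{i_1}\cdots\theta_{i_r}$ with $r \ge 1$, so as an element of $C_n$ it lies in the left ideal $\sum_i C_n \fermc_i$, whence $\beta(1-e_n) = 0$; symmetrically $e_m \beta = \beta$. Closure under composition is then immediate, and $\Theta(\mathrm{id}_{(X,n)}) = e_n = 1^{*}_{(X,n)}$ is direct; compatibility of $\Theta$ with composition is obtained by unpacking \eqref{eq:comp_chain1} together with the observation that the associator $S_{l,m}\otimes_{C_m} S_{m,n} \cong S_{l,n}$ restricts to the identity on vacuum parts. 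The main obstacle I anticipate is sign bookkeeping in the $C_m$-linearity check and in the composition check, where Definition~\ref{defn:contraction}, the isomorphism \eqref{eq:isosbimodule}, and the spinor description of $S_n^{*}$ must all be brought into alignment; once the conventions are settled, every verification is routine.
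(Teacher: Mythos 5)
Your proposal is correct and defines $\Theta$ by exactly the same formula as the paper ($\Theta(\alpha)(x) = \alpha(\iota_{m,n}\otimes x)$, which under your identification $S_{m,n}\otimes_{C_n}X \cong S_m\otimes_k e_nX$ is precisely $x\mapsto\alpha(1\otimes e_n x)$). Where you diverge is in the verifications. For the bijection on morphism sets, the paper relegates the work to Lemma~\ref{lemma:morphisms_two_forms}, which proceeds by a chain of abstract $\Hom$-isomorphisms via Lemma~\ref{lemma:simplehom} and Lemma~\ref{lemma:morphism_out_1}; you instead write down the inverse explicitly, $\theta_{i_1}\cdots\theta_{i_r}\otimes x\mapsto\fermc_{i_1}\cdots\fermc_{i_r}\beta(x)$, and verify $\ferm_j$-equivariance by the Clifford commutator calculation, which matches contraction as in Definition~\ref{defn:contraction}. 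This buys a concrete formula that the paper's appendix argument keeps implicit. For the unit check, the paper expands $\alpha e_n = \alpha\ferm_1\cdots\ferm_n\fermc_n\cdots\fermc_1$ and inductively peels off $\fermc_n,\fermc_{n-1},\ldots$; you instead observe that $1-e_n$, being the projection with image $\bigoplus_{r\ge1}\bigwedge^r F_n$, lies in the left ideal $\sum_i C_n\fermc_i$, whence $\alpha(1-e_n)=0$. Both arguments are valid; yours is slicker but relies on knowing the left-ideal structure of $C_n\cong\End_k(S_n)$, whereas the paper's is elementary and generator-level. One small imprecision: your parenthetical justification of $e_n\fermc_i=0$ cites only $\fermc_i^2=0$, but the computation also needs the anticommutativity $\fermc_i\fermc_j=-\fermc_j\fermc_i$ to carry $\fermc_i$ to its mate; this is minor but worth stating.
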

\begin{proof}
To prove that $\cat{T}^{\star}$ is a category it only needs to be checked that $1^*_{(X,n)}$ really acts as an identity. Let $\alpha: X \lto Y$ be a morphism in $\cat{T}^{\star}$ and consider
\begin{align*}
\alpha e_n &= \alpha \ferm_1 \cdots \ferm_n \fermc_n \cdots \fermc_1\\
&= \alpha \left( \ferm_1 \cdots \ferm_{n-1} \fermc_{n-1} \cdots \fermc_1 - \ferm_1 \cdots \fermc_n \ferm_n \cdots \fermc_1 \right)\\
&= \alpha \ferm_1 \cdots \ferm_{n-1} \fermc_{n-1} \cdots \fermc_1
\end{align*}
since in the second summand within the brackets, there is nothing to stop $\fermc_n$ from commuting to the left and annihilating with $\alpha$. Proceeding inductively, we see that $\alpha e_n = \alpha$. The proof that $e_n \alpha = \alpha$ is similar.

The functor $\Theta$ is defined to be the identity on objects, and given $\beta: (X,n) \lto (Y,m)$ in $\cat{T}^{\bullet}$ which is a morphism $\beta: S_{m,n} \otimes_{C_n} X \lto Y$ in $\cat{T}$ we define
\begin{gather*}
\Theta(\beta): X \lto Y\,,\\
\Theta(\beta)( x ) = \beta( \iota_{m,n} \otimes x )
\end{gather*}
where $\iota_{m,n}$ is as in Definition \ref{defn:idempotent_gen}. We relegate the proof that this functor gives a bijection on morphism sets to Lemma \ref{lemma:morphisms_two_forms}. Functoriality is easily checked, and $\Theta(1_X) = 1^*_{(X,n)}$ since by definition $\iota_{n,n} = e_n$.
\end{proof}

Every $C_n$-module $X$ in $\cat{T}$ is of the form $S_n \otimes_k V$ for some object $V$. It will be important later to know exactly how to extract the object $V$ from $X$ by splitting an idempotent.

\begin{lemma}\label{lemma:whackamole} Let $X$ be a $C_n$-module in $\cat{T}$. The idempotent $e_n: X \lto X$ splits, say 
\[
\xymatrix@C+2pc{
X \ar@<1ex>[r]^-{f_0} & V \ar@<1ex>[l]^-{g_0}
}
\]
with $f_0 g_0 = 1_V$ and $g_0 f_0 = e_n$ and in $\cat{T}^\bullet$ there are mutually inverse isomorphisms
\[
\xymatrix@C+2pc{
(X, n) \ar@<1ex>[r]^-{f} & (V,0) \ar@<1ex>[l]^-{g}
}
\]
induced by $f_0,g_0$. In particular
\be
S_n^* \otimes_{C_n} X \cong V\,, \qquad S_n \otimes_k V \cong X\,.
\ee
\end{lemma}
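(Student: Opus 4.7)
The plan is to work in the equivalent category $\cat{T}^\star$ of Lemma \ref{lemma:equivcliffordthick}, where the identity $1^\star_{(X,n)}$ is exactly the endomorphism $e_n$ that we wish to split. Since $e_n \in C_n$ is an idempotent (Definition \ref{defn:idempotent_e}), its action on $X$ is an idempotent in $\cat{T}^*(X,X)$, and idempotent completeness of $\cat{T}$ supplies a splitting $X \xrightarrow{f_0} V \xrightarrow{g_0} X$ with $f_0 g_0 = 1_V$ and $g_0 f_0 = e_n$.

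Next I would check that $f_0$ and $g_0$ define morphisms $(X,n) \to (V,0)$ and $(V,0) \to (X,n)$ in $\cat{T}^\star$, which amounts to $f_0 \fermc_i = 0$ and $\ferm_i g_0 = 0$ for $1 \le i \le n$. The key algebraic input is the pair of identities $e_n \fermc_i = 0$ and $\ferm_i e_n = 0$ in $C_n$: anticommuting $\fermc_i$ leftward past $\fermc_1, \ldots, \fermc_{i-1}$ at the right end of $e_n$ accumulates a sign $(-1)^{i-1}$ and brings $\fermc_i$ next to its twin, where $\fermc_i^2 = 0$ forces vanishing; the second identity is symmetric. Now $g_0$ is split mono with retraction $f_0$, so $g_0(f_0 \fermc_i) = e_n \fermc_i = 0$ forces $f_0 \fermc_i = (f_0 g_0) f_0 \fermc_i = 0$, and dually $\ferm_i g_0 = 0$. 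Composition in $\cat{T}^\star$ is ordinary composition in $\cat{T}$, so $g_0 \circ f_0 = e_n = 1^\star_{(X,n)}$ and $f_0 \circ g_0 = 1_V = 1^\star_{(V,0)}$ exhibit $f_0$ and $g_0$ as mutually inverse in $\cat{T}^\star$. Transporting along $\Theta^{-1}$ produces the required mutually inverse isomorphisms $f, g$ in $\cat{T}^\bullet$.

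For the ``in particular'' statement I would just unfold the definition of morphisms in $\cat{T}^\bullet$. A morphism $(X,n) \to (V,0)$ is by construction a $C_0 = k$-linear map $S_{0,n} \otimes_{C_n} X = S_n^* \otimes_{C_n} X \to V$ in $\cat{T}$, so an invertible one yields $S_n^* \otimes_{C_n} X \cong V$. Symmetrically, a morphism $(V,0) \to (X,n)$ is a $C_n$-linear map $S_{n,0} \otimes_{C_0} V = S_n \otimes_k V \to X$, and an invertible such map gives $S_n \otimes_k V \cong X$. The only nonroutine point in the whole argument is the Clifford identity $e_n \fermc_i = 0$ combined with the split-mono manoeuvre; the remainder is formal unwinding of $\cat{T}^\star$ and of the Grothendieck construction.
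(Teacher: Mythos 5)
Your proof is correct and follows essentially the same route as the paper's: pass to the equivalent category $\cat{T}^\star$, split the idempotent $e_n$, verify that $f_0,g_0$ lie in the $\cat{T}^\star$-Hom sets using $e_n\fermc_i = 0$ (the paper does this directly via $f_0 = f_0 e_n$, you via the split-mono retraction, which amounts to the same identity $f_0 = f_0 g_0 f_0$), and then note that $g_0 f_0 = e_n = 1^\star_{(X,n)}$, $f_0 g_0 = 1_V = 1^\star_{(V,0)}$ before transporting along $\Theta$. The paper's proof stops there, leaving the ``in particular'' implicit; your unfolding of it via $S_{0,n} = S_n^*$ and $S_{n,0} = S_n$ is a small and reasonable addition.
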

\begin{proof}
The map $f_0$ is a morphism $(X,n) \lto (V,0)$ in $\cat{T}^{\star}$ since
\[
f_0 \fermc_i = f_0 e_n \fermc_i = f_0 \ferm_1 \cdots \ferm_n \fermc_n \cdots \fermc_1 \fermc_i = 0\,.
\]
Similarly $g_0$ is a morphism $(V,0) \lto (X,n)$ in $\cat{T}^{\star}$. Let $f,g$ be the morphisms in $\cat{T}^\bullet$ corresponding to $f_0,g_0$ under the equivalence of Lemma \ref{lemma:equivcliffordthick}. To prove that $fg = 1_{(X,n)}$ and $gf = 1_{(V,0)}$ in $\cat{T}^{\bullet}$ it suffices to prove that $g_0 f_0 = 1^*_{(X,n)} = e_n$ and $f_0 g_0 = 1^*_{(V,0)} = 1_V$ in $\cat{T}$. But this is true by definition.
\end{proof}

\begin{lemma}\label{lemma:embedcliffordthick} $\cat{T}^\bullet$ is a supercategory and the functor $\iota: \cat{T} \lto \cat{T}^\bullet$ defined by $\iota(X) = (X,0)$ is an equivalence of supercategories.
\end{lemma}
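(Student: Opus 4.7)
The plan is to transport the supercategory structure from $\cat{T}$ to $\cat{T}^\bullet$, upgrade $\iota$ to a superfunctor, and then verify fully faithful and essentially surjective using the description $\cat{T}^\star$ of Lemma \ref{lemma:equivcliffordthick} together with the splitting supplied by Lemma \ref{lemma:whackamole}.

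For the structure on $\cat{T}^\bullet$: working in $\cat{T}^\star$, I would define $\Psi^\bullet(X, n) = (\Psi X, n)$, where $\Psi X$ carries the $C_n$-action obtained by transporting the action on $X$ along the isomorphism of $\mathbb{Z}_2$-graded algebras $\cat{T}^*(X, X) \cong \cat{T}^*(\Psi X, \Psi X)$ induced by the superfunctor $\Psi$. On morphisms, $\Psi^\bullet$ sends $\alpha$ to $\Psi\alpha$; the defining conditions $\alpha \fermc_i = 0$ and $\ferm_j \alpha = 0$ are preserved by this construction. Take $\xi^\bullet_{(X, n)} = \xi_X$, which is Clifford-linear because the action on $\Psi^2 X$ was, by design, the one transported back to the action on $X$ via $\xi$. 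The coherence $\xi^\bullet * 1_{\Psi^\bullet} = 1_{\Psi^\bullet} * \xi^\bullet$ descends directly from the analogous identity in $\cat{T}$.

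To equip $\iota$ with the structure of a superfunctor, note that $\iota \Psi(X) = (\Psi X, 0) = \Psi^\bullet \iota(X)$ on the nose, so we may take $\gamma_\iota = 1$; both coherence axioms in Definition \ref{defn:laxsuperfunctor} become trivial. Fully faithfulness is immediate from the $\cat{T}^\star$-presentation: a morphism $(X, 0) \lto (Y, 0)$ is just an unconditioned morphism $X \lto Y$ in $\cat{T}$, with identity $e_0 = 1_X$. For essential surjectivity, the key input is Lemma \ref{lemma:whackamole}: given any $(X, n)$, splitting the idempotent $e_n$ (possible since $\cat{T}$ is idempotent complete) yields an object $V \in \cat{T}$ together with an isomorphism $(X, n) \cong (V, 0) = \iota(V)$ in $\cat{T}^\bullet$.

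The main bookkeeping obstacle is to check that $\Psi^\bullet$ is well-defined as an endofunctor of $\cat{T}^\bullet$ — as opposed to just of $\cat{T}^\star$ — by being compatible with the composition rule \eqref{eq:comp_chain1} coming from the cofibered description. This reduces via the equivalence $\Theta$ of Lemma \ref{lemma:equivcliffordthick} to the fact that $\Psi$ respects the composition in $\cat{T}^\star$ and preserves the canonical idempotents $e_n$, both of which hold because these are constructed from data (composition in $\cat{T}$ and the transported Clifford action) that $\Psi$ preserves by construction.
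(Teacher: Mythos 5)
Your proof follows the same route as the paper: define $\Psi^\bullet(X,n) = (\Psi X, n)$ by transporting the $C_n$-action to $\Psi X$, observe full faithfulness directly from the $\cat{T}^\star$-presentation, and derive essential surjectivity from Lemma \ref{lemma:whackamole}. The only cosmetic difference is that the paper pins down the transported $C_n$-action on $\Psi X$ by citing the Koszul-sign convention of Definition \ref{defn:psimodule}, whereas you leave the transport implicit as ``the isomorphism $\cat{T}^*(X,X)\cong\cat{T}^*(\Psi X,\Psi X)$ induced by $\Psi$''; either choice yields a valid (and isomorphic) supercategory structure, and everything downstream is identical.
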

\begin{proof}
The supercategory structure is given by the functor $\Psi: \cat{T}^\bullet \lto \cat{T}^\bullet$ where $\Psi(X,n) = ( \Psi X, n )$ and $\Psi X$ has the $C_n$-action given by Definition \ref{defn:psimodule}. The functor $\iota$ is fully faithful and it follows from Lemma \ref{lemma:whackamole} that it is essentially surjective.
\end{proof}

\section{The superbicategory $\L$}\label{section:lg_cut_system}


In this section we define a superbicategory without units $\L$. Later we will prove that this bicategory is equivalent to $\LG$, although this will not be apparent at first. The objects of $\L$ are the same as $\LG$, namely potentials $(x,W)$.

\begin{definition} Given potentials $(x,W)$ and $(y,V)$ we define
\be\label{eq:defnluv}
\L(W,V) = \Big( \hmf( k[x,y], V - W )^{\omega} \Big)^{\bullet}\,,
\ee
where $(-)^\omega$ denotes the idempotent completion (see Remark \ref{remark:idempotent_completion}) and $(-)^{\bullet}$ is the Clifford thickening (see Section \ref{section:cliff_thick}).
\end{definition}

Thus a $1$-morphism $W \lto V$ in $\L$ is a finite rank matrix factorisation of $V(y) - W(x)$ together with an idempotent endomorphism $e$ and a family of odd operators $\ferm_i, \fermc_i$ satisfying Clifford relations and the equations (all up to homotopy)
\begin{gather*}
\ferm_i e = \ferm_i = e \ferm_i\,, \qquad \fermc_i e = \fermc_i = e \fermc_i\,.
\end{gather*}
For matrix factorisations $X,Y$ of $V(y) - W(x)$ with respective Clifford actions $\{ \ferm_i, \fermc_i \}_{i=1}^a$ and $\{ \rho_j, \rho_j^\dagger \}_{j=1}^b$ the $2$-morphisms $\phi: (X,a) \lto (Y,b)$ in $\L$ are by Lemma \ref{lemma:equivcliffordthick} in bijection with morphisms of matrix factorisations $\phi: X \lto Y$ satisfying
\[
\phi \fermc_i = 0\,, \qquad \rho_j \phi = 0\,, \qquad 1 \le i \le a, \quad 1 \le j \le b\,.
\]
Observe that the homotopy category $\LG(W,V)_{fin}$ of finite rank matrix factorisations sits fully faithfully inside $\L(W,V)$ as the subcategory where all idempotents are identities and there are no Clifford actions, and by Lemma \ref{lemma:embedcliffordthick} the inclusion of this subcategory into $\L(W,V)$ is an idempotent completion.

 The first aim of this section is to define, for any object $(z,U)$ of $\LG$, a functor
\begin{gather*}
\L(V,U) \otimes_k \L(W,V) \lto \L(W, U)\,\\
(Y,X) \mapsto Y \l X
\end{gather*}
which we call the \textsl{cut functor}. The cut operation is defined on matrix factorisations $X$ of $V - W$ and $Y$ of $U - V$ as follows, assuming $y = (y_1,\ldots,y_m)$ and writing
\be\label{eq:defnjacobian}
J_V = k[y] / ( \partial_{y_1} V, \ldots, \partial_{y_m} V )\,.
\ee

\begin{lemma} The $\nZ_2$-graded $k[x,z]$-module
\[
Y \l X = Y \otimes_{k[y]} J_V \otimes_{k[y]} X
\]
with the differential $d_Y \otimes 1 + 1 \otimes d_X$ is a finite rank matrix factorisation of $U - W$.
\end{lemma}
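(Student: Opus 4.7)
The plan is to verify three things in sequence: (i) the proposed differential $d=d_Y\otimes 1+1\otimes d_X$ makes sense and squares to $(U-W)\cdot 1$ on $Y\otimes_{k[y]}X$, (ii) it descends to the quotient $Y\otimes_{k[y]}J_V\otimes_{k[y]}X$, and (iii) the quotient is finite rank free over $k[x,z]$.

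For (i), on the $\mathbb{Z}_2$-graded $k[x,y,z]$-module $Y\otimes_{k[y]}X$ I would use the standard Koszul convention so that $d_Y\otimes 1$ and $1\otimes d_X$ are two odd operators that anticommute on the tensor product. Then $d^2=d_Y^2\otimes 1+1\otimes d_X^2=(U-V)\cdot 1+1\cdot(V-W)$, and the two occurrences of $V$ cancel because the tensor product is taken over $k[y]$ and $V\in k[y]$, so $V\otimes 1=1\otimes V$ as operators. The only subtle sign bookkeeping is in the vanishing of the cross term, and this is a routine Koszul calculation.

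For (ii), the point is that $d_X$ is $k[x,y]$-linear and $d_Y$ is $k[y,z]$-linear, hence $d$ is $k[y]$-linear. In particular multiplication by each $\partial_{y_i}V\in k[y]$ commutes with $d$, so $d$ preserves the submodule $(\partial V)(Y\otimes_{k[y]}X)$ and therefore descends to the quotient, where it still squares to $(U-W)\cdot 1$. Note that here we also use that $(U-W)$ is unchanged by passing to the quotient since $U-W\in k[x,z]$ has nothing to do with the $y$-ideal we are killing.

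For (iii), pick homogeneous bases so $X\cong k[x,y]^{\oplus b}$ and $Y\cong k[y,z]^{\oplus a}$ as free modules. Then $Y\otimes_{k[y]}X\cong k[x,y,z]^{\oplus ab}$, and tensoring further with $J_V=k[y]/(\partial_{y_1}V,\ldots,\partial_{y_m}V)$ over $k[y]$ gives
\[
Y\otimes_{k[y]}J_V\otimes_{k[y]}X\;\cong\;\bigl(k[x,z]\otimes_k J_V\bigr)^{\oplus ab}
\]
as $k[x,z]$-modules. Since by hypothesis $J_V$ is a finitely generated free $k$-module, this is a finitely generated free $k[x,z]$-module of rank $ab\cdot\operatorname{rank}_k(J_V)$, and it inherits a $\mathbb{Z}_2$-grading from $X$ and $Y$. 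No step is really an obstacle; the only thing to get right is the Koszul sign in (i), and the mild observation in (ii) that $d$ is $k[y]$-linear so the Jacobi-ideal quotient is compatible with $d$.
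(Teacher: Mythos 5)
Your proof is correct and uses the same key observation as the paper (that the potential hypothesis makes $J_V$ finite rank free over $k$, hence $Y\,|\,X$ finite rank free over $k[x,z]$). The paper's proof is a single sentence addressing only the finite-rank claim; your steps (i) and (ii) are routine verifications the paper leaves implicit, and you carry them out correctly.
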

\begin{proof}
Since $V$ is a potential $J_V$ is a finite rank free $k$-module, and it follows that $Y \l X$ is a finite rank free $k[x,z]$-module. 
\end{proof}

\begin{remark}\label{remark:inflation} If we choose a $k$-basis for $J_V$ then multiplication by $y_i$, as a $k$-linear operator $J_V \lto J_V$, gives a matrix $[y_i]$ over $k$. The cut $Y \l X$ has a differential which, as a matrix, can be described by taking the matrix of $d_Y \otimes 1 + 1 \otimes d_X$ over $k[x,y,z]$ and replacing every occurrence of $y_i$ by the scalar matrix $[y_i]$ and $1 \in k$ by the identity matrix. This ``inflation'' produces a matrix of polynomials over $k[x,z]$. 
\end{remark}

\begin{definition} Given morphisms of matrix factorisations $\phi: X \lto X'$ and $\psi: Y \lto Y'$ we define the morphism of matrix factorisations
\[
\psi \l \phi: Y \l X \lto Y' \l X'
\]
to be induced on the quotient by $\psi \otimes \phi: Y \otimes X \lto Y' \otimes X'$. This is clearly functorial in the sense that for morphisms $\phi': X' \lto X''$ and $\psi': Y' \lto Y''$ we have
\begin{align}
( \psi' \l \phi' ) \circ ( \psi \l \phi ) = ( \psi' \psi ) \l ( \phi' \phi )\,,
\end{align}
and $1_Y \l 1_X = 1_{Y \l X}$.
\end{definition}

The next step is to equip $Y \l X$ with a Clifford action. To do this we need to take the technical step of passing to a completion of the ring $k[y]$.

\begin{definition} For $1 \le i \le m$ set $t_i = \partial_{y_i} V$ and let $I = (t_1,\ldots,t_m) \subseteq k[y]$. Then
\[
\widehat{k[y]} = \varprojlim_r k[y]/I^r
\]
denotes the completion of $k[y]$ at the ideal $I$, that is, along the critical locus of $V$.

We also define the completed tensor product of $Y, X$ to be
\begin{equation}\label{eq:completed_tensor_product}
Y \,\widehat{\otimes}\, X = Y \otimes_{k[y]} \widehat{k[y]} \otimes_{k[y]} X
\end{equation}
\end{definition} 

\begin{remark}\label{remark:flatness} By hypothesis $k$ is noetherian, hence so is $k[y]$ and therefore the canonical ring morphism $k[y] \lto \widehat{k[y]}$ is flat. 
\end{remark}

With $\Omega^1_{k[t]/k} \cong \bigoplus_{i=1}^m k[t] dt_i$ denoting K\"ahler diferentials, there exists by \cite[Appendix B]{dm1102.2957} a $k$-linear flat connection (which is ``standard'' in the sense of \cite[Definition 8.6]{dm1102.2957})
\begin{equation}\label{eq:connection_nabla}
\nabla: \widehat{k[y]} \lto \widehat{k[y]} \otimes_{k[t]} \Omega^1_{k[t]/k}
\end{equation}
the components of which are $k$-linear operators $\partial_{t_i}: \widehat{k[y]} \lto \widehat{k[y]}$. A simple example of such a connection is given in Section \ref{example:computing_homs}. Using a chosen homogeneous basis $\{ e_a \otimes f_b \}_{a,b}$ for $Y \otimes X$ we may extend the operators $\partial_{t_i}$ extend to $k[x,z]$-linear operators
\begin{gather*}
\partial_{t_i}: Y \, \widehat{\otimes}\, X \lto Y \, \widehat{\otimes}\, X\\
e_a \otimes h(y) \otimes f_b \mapsto e_a \otimes \partial_{t_i}( h(y) ) \otimes f_b\,.
\end{gather*}

\begin{lemma} For $1 \le i \le m$ the operator
\be
[ d_{Y \otimes X}, \partial_{t_i} ] = d_{Y \otimes X} \partial_{t_i} - \partial_{t_i} d_{Y \otimes X}: Y \, \widehat{\otimes}\, X \lto Y \, \widehat{\otimes}\, X
\ee
is $k[t]$-linear and passes to a $k[x,z]$-linear operator on $Y \l X \cong \left( Y \, \widehat{\otimes}\, X \right) \otimes_{k[t]} k[t]/(t)$.
\end{lemma}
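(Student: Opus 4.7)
The plan is to verify $k[t]$-linearity by working with an explicit matrix expression for $d_{Y \otimes X}$ and exploiting the defining property of the connection. First I would observe that $k[x,z]$-linearity of the commutator is immediate: the extended operators $\partial_{t_i}$ act only on the middle factor $k\llbracket y \rrbracket$ and hence are $k[x,z]$-linear by construction, while $d_{Y \otimes X} = d_Y \otimes 1 + 1 \otimes d_X$ is a differential of a matrix factorisation over $k[x,z]$. So the real content is $k[t]$-linearity and descent to the quotient.

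For $k[t]$-linearity, fix homogeneous bases $\{e_a\}$ and $\{f_b\}$ of $Y$ and $X$ and decompose
\[
d_{Y \otimes X} = \sum_\alpha M_\alpha \cdot m_{p_\alpha(y)}
\]
where each $M_\alpha$ is a ``constant'' operator on the basis $\{e_a \otimes 1 \otimes f_b\}$ (hence $k\llbracket y \rrbracket$-linear, and in particular commuting with $\partial_{t_i}$) and $m_{p_\alpha(y)}$ is multiplication by some $p_\alpha(y) \in k[y]$ acting on the middle factor. Then
\[
[d_{Y \otimes X}, \partial_{t_i}] = \sum_\alpha \pm M_\alpha [m_{p_\alpha(y)}, \partial_{t_i}]\,,
\]
so it suffices to check that each commutator $[m_{p(y)}, \partial_{t_i}]$ is $k[t]$-linear as an operator on $k\llbracket y \rrbracket$. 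For $f \in k[t]$ the Leibniz rule for the connection gives $[\partial_{t_i}, m_f] = m_{\partial_{t_i}(f)}$, and since $k\llbracket y \rrbracket$ is commutative all multiplication operators commute with one another. A short algebraic manipulation then shows that $[m_{p(y)}, \partial_{t_i}]$ commutes with $m_f$ for every $f \in k[t]$, completing this step.

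Given $k[t]$-linearity, the descent is formal: any $k[t]$-linear endomorphism of $Y \,\check{\otimes}\, X$ preserves the submodule $(t)(Y\,\check{\otimes}\, X)$ and therefore induces a $k[x,z]$-linear endomorphism on the quotient $(Y \,\check{\otimes}\, X) \otimes_{k[t]} k[t]/(t)$. To identify this quotient with $Y \l X$ I would use that, because $V$ is a potential, the ideal $(\partial_{y_1}V, \ldots, \partial_{y_m}V)$ contains a power of the maximal ideal at the origin in $k[y]$, so $k\llbracket y \rrbracket/(t)\cong k[y]/(t) = J_V$, and hence
\[
(Y \otimes_{k[y]} k\llbracket y \rrbracket \otimes_{k[y]} X) \otimes_{k[t]} k[t]/(t) \cong Y \otimes_{k[y]} J_V \otimes_{k[y]} X = Y \l X\,.
\]

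The principal obstacle in this plan is the $k[t]$-linearity verification in the middle step: although morally clear from the Leibniz rule for the connection together with commutativity of $k\llbracket y \rrbracket$, it requires carefully tracking which operators are $k\llbracket y \rrbracket$-linear versus merely $k$-linear. Once that hurdle is cleared, both $k[x,z]$-linearity and the descent to $Y \l X$ are essentially bookkeeping.
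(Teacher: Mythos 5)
Your proof is correct and follows the same underlying idea as the paper's, namely reducing $k[t]$-linearity of the commutator to the Leibniz property $[\partial_{t_i}, t_j] = \delta_{ij}$ of the connection together with a Jacobi-identity manipulation. The paper's version is shorter because it skips the matrix decomposition of $d_{Y \otimes X}$ entirely: since $d_{Y \otimes X}$ is a $k[x,y,z]$-linear operator it already commutes with multiplication by $t_j$, so $[t_j, d_{Y \otimes X}] = 0$, and then a single application of the graded Jacobi identity gives
\[
[[d_{Y \otimes X}, \partial_{t_i}], t_j] = -[[t_j, d_{Y \otimes X}], \partial_{t_i}] - [[\partial_{t_i}, t_j], d_{Y \otimes X}] = -[\delta_{ij}, d_{Y \otimes X}] = 0\,.
\]
Your termwise argument with $d_{Y \otimes X} = \sum_\alpha M_\alpha m_{p_\alpha}$ is a valid but more laboured way of establishing the same commutation relation; the only thing it buys you is an explicit tracking of which operators act on which tensor factor, which the paper handles by noting $k[y]$-linearity once and for all. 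Your remarks on $k[x,z]$-linearity and the descent to $Y \l X$ are correct, though the paper treats these as tacit; one small point is that the identification $k\llbracket y\rrbracket/(t) \cong k[y]/(t) = J_V$ does rely on the potential hypothesis as you say, but the sign $\pm$ in your decomposition $[d_{Y \otimes X}, \partial_{t_i}] = \sum_\alpha \pm M_\alpha [m_{p_\alpha(y)}, \partial_{t_i}]$ is always $+$, since $\partial_{t_i}$ is an even operator on $Y\,\check{\otimes}\,X$ that commutes with the $M_\alpha$.
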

\begin{proof}
The fact that $\nabla$ is a connection over $k[t]$ means that the operators $\partial_{t_i}$ obey the Leibniz rule, that is
\[
\partial_{t_i}( t_j h ) = \partial_{t_i}( t_j ) h + t_j \partial_{t_i} h = \delta_{ij} h + t_j \partial_{t_i} h
\]
so $[ \partial_{t_i}, t_j ] = \delta_{ij}$. We calculate
\[
[[d_{Y \otimes X}, \partial_{t_i}], t_j] = - [[t_j, d_{Y \otimes X}], \partial_{t_i}] - [[\partial_{t_i}, t_j], d_{Y \otimes X}] = - [ \delta_{ij}, d_{Y \otimes X}] = 0
\]
which shows that $[d_{Y \otimes X}, \partial_{t_i}]$ is $k[t]$-linear.
\end{proof}

\begin{definition}\label{defn:atiyah} The $i$th Atiyah class is the odd $k[x,z]$-linear operator
\[
\At^{Y,X}_i = [d_{Y \otimes X}, \partial_{t_i}]: Y \l X \lto Y \l X \,.
\]
Usually we just write $\At_i$ where it will not cause confusion.
\end{definition}

This operator is the $i$th Atiyah class of $Y \,\widehat{\otimes}\, X$ relative to the ring morphism $k[x,y] \lto k[x,y,t]$ as defined in \cite[Section 9]{dm1102.2957}. A reference for Atiyah classes is \cite{buchweitz_flenner}, but we will prove the necessary properties here for the reader's convenience.

\begin{lemma}\label{lemma:atiyahclosed} $\At_i$ is $d_{Y \otimes X}$-closed, and if $\phi: X \lto X'$ and $\psi: Y \lto Y'$ are morphisms of matrix factorisations then there are $k[x,z]$-linear homotopies
\be\label{eq:atiyahclosednatural}
\At^{Y',X'}_i \circ \,( \psi \l \phi ) \simeq ( \psi \l \phi ) \circ \At^{Y,X}_i\,.
\ee
That is, the Atiyah classes are \emph{natural}.
\end{lemma}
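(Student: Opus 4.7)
For closedness, I would compute the graded commutator of $\At_i$ with the differential directly. Since $\At_i = d_{Y\otimes X}\partial_{t_i} - \partial_{t_i} d_{Y\otimes X}$ is odd, ``closed'' means $d_{Y\otimes X}\At_i + \At_i d_{Y\otimes X} = 0$. Expanding this anticommutator, the cross terms $\pm d_{Y\otimes X}\partial_{t_i}d_{Y\otimes X}$ cancel, leaving
\[
d_{Y\otimes X}\At_i + \At_i d_{Y\otimes X} = d_{Y\otimes X}^2\,\partial_{t_i} - \partial_{t_i}\, d_{Y\otimes X}^2.
\]
Now $d_{Y\otimes X}^2 = (U(z) - W(x))\cdot 1$, and $\partial_{t_i}$ is $k[x,z]$-linear by construction (it acts only in the $y$-direction), so this vanishes.

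For naturality, set $f = \psi \otimes 1 \otimes \phi: Y\,\check{\otimes}\, X \lto Y'\,\check{\otimes}\, X'$, which is the scalar extension of $\psi \l \phi$ along $k[y] \lto k\llbracket y \rrbracket$. Since $\psi,\phi$ are morphisms of matrix factorisations, $f$ commutes with differentials: $d_{Y'\otimes X'} f = f d_{Y\otimes X}$. I would then introduce the even $k[x,z]$-linear operator
\[
h = \partial'_{t_i}\circ f - f \circ \partial_{t_i}: Y\,\check{\otimes}\, X \lto Y'\,\check{\otimes}\, X'
\]
and verify by direct manipulation, using $d'f = fd$, that
\[
\At_i^{Y',X'} \circ f - f \circ \At_i^{Y,X} = d_{Y'\otimes X'}\, h - h\, d_{Y\otimes X}\,,
\]
so that $h$ exhibits the desired null-homotopy.

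The remaining point is to check that $h$ descends from $Y\,\check{\otimes}\, X$ to the quotient $Y \l X \cong (Y\,\check{\otimes}\, X)\otimes_{k[t]} k[t]/(t)$, i.e.\ that $h$ is $k[t]$-linear. Since $f$ is $k[y]$-linear (and hence $k[t]$-linear), while $\partial_{t_i}$ obeys the Leibniz rule $[\partial_{t_i}, t_j] = \delta_{ij}$, the computation
\[
h(t_j \xi) = \partial'_{t_i}(t_j f(\xi)) - f(t_j \partial_{t_i}\xi + \delta_{ij}\xi) = t_j h(\xi) + \delta_{ij}f(\xi) - \delta_{ij}f(\xi) = t_j h(\xi)
\]
settles this. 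The differential itself is $k[t]$-linear, so the homotopy equation descends as well.

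The routine part is the symbolic manipulation; the main thing to be careful about is the sign bookkeeping (we are working with graded commutators, and $\At_i$ is odd while $f$ and $h$ are even) and the verification that all auxiliary operators defined on the completed tensor product $Y\,\check{\otimes}\, X$ pass cleanly to the finite-rank quotient $Y \l X$.
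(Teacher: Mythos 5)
Your proof is correct and follows essentially the same route as the paper's: the paper phrases the closedness as $[d,[d,\partial_{t_i}]]=0$ and reads the homotopy $[\partial_{t_i},\kappa]$ off the graded Jacobi identity, whereas you expand the same identities by hand. The additional check that $h$ is $k[t]$-linear and descends to $Y\l X$ is a sensible bit of bookkeeping that the paper leaves implicit.
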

\begin{proof}
The Atiyah class is closed since (with $d = d_{Y \otimes X}$)
\[
[d, \At_i] = [d, [d, \partial_{t_i}]] = 0\,.
\]
If $\kappa: Y \otimes X \lto Y' \otimes X'$ is any $k[x,y,z]$-linear even closed operator, 
\[
[ \kappa, [ d, \partial_{t_i} ] ] + [ \partial_{t_i}, [\kappa, d]] + [ d, [ \partial_{t_i}, \kappa] ] = 0
\]
from which we deduce that $[\partial_{t_i}, \kappa]$ is a homotopy establishing \eqref{eq:atiyahclosednatural}.
\end{proof}

The other ingredient in defining the Clifford action are the partial derivatives $\partial_{y_i}(d_X)$, as discussed in Section \ref{section:partial}. On $Y \otimes X$ we have the odd operator $\partial_{y_i}(d_X) = 1 \otimes \partial_{y_i}(d_X)$ which is a homotopy for the action of $\partial_{y_i}(V - W) = \partial_{y_i}(V)$. Hence when we pass to the quotient, $\partial_{y_i}(d_X)$ is a closed odd operator on $Y \l X$.

\begin{definition}\label{defn:cliffordaction_cut} On $Y \l X$ we define odd $k[x,z]$-linear closed operators
\begin{equation}\label{eq:intro_clifford_act1}
\ferm_i = \At_i\,, \qquad \fermc_i = - \partial_{y_i}(d_X) - \frac{1}{2} \sum_q \partial_{y_q} \partial_{y_i}(V) \At_{q}\,.
\end{equation}
\end{definition}

For a concrete example, see Section \ref{example:computing_homs}.

\begin{theorem}\label{theorem:cut_is_rep} Given matrix factorisations $Y,X$ as above, the data
\be
Y \l X = \left( Y \otimes_{k[y]} J_V \otimes_{k[y]} X, \{ \ferm_i, \fermc_i \}_{i=1}^m \right)
\ee
defines a representation of $C_m$ in the homotopy category of matrix factorisations, that is, the $\ferm_i, \fermc_i$ satisfy the Clifford relations \eqref{eq:clifford_relations}
up to homotopy.
\end{theorem}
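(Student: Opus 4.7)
The plan is to verify the three Clifford relations directly from Definition \ref{defn:cliffordaction_cut}, using repeatedly the graded Jacobi identity together with four inputs: Lemma \ref{lemma:atiyahclosed} ($[d,\At_i] = 0$); the Leibniz rule for the flat connection $\nabla$, giving $[\partial_{t_i}, t_j] = \delta_{ij}$; the identity $[d_{Y\otimes X}, \partial_{y_j}(d_X)] = \partial_{y_j}(V-W) = t_j$, from \eqref{eq:partials_are_homotopies} applied to $X$ and the fact that $W$ does not depend on $y$; and Lemma \ref{lemma:commutator_of_lambdas} for $X$, giving $\partial_{y_i}(d_X)\partial_{y_j}(d_X) + \partial_{y_j}(d_X)\partial_{y_i}(d_X) \simeq \partial_{y_i y_j}(V)$. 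All calculations are performed on $Y\,\check{\otimes}\,X$ and then pushed down to $Y\l X$.

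For $\ferm_i\ferm_j + \ferm_j\ferm_i \simeq 0$, I unfold $\ferm_j = [d,\partial_{t_j}]$ and apply Jacobi to $[\At_i, [d, \partial_{t_j}]]$. Since $[\At_i, d] = 0$, this collapses to $[\At_i, \At_j] = -[d, [\At_i, \partial_{t_j}]]$, which is a boundary. For the mixed relation the key step is the computation
\[
[\At_i, \partial_{y_j}(d_X)] = [d, [\partial_{t_i}, \partial_{y_j}(d_X)]] - [\partial_{t_i}, [d, \partial_{y_j}(d_X)]],
\]
where the first term is a boundary, and the second simplifies via $[d,\partial_{y_j}(d_X)] = t_j$ and $[\partial_{t_i}, t_j] = \delta_{ij}$ to $-\delta_{ij}$. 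The Hessian terms in $\fermc_j$ contribute $\sum_q \partial_{y_qy_j}(V)\,[\At_i,\At_q] \simeq 0$ by the first relation already proved, leaving $\ferm_i\fermc_j + \fermc_j\ferm_i \simeq \delta_{ij}$. For $\fermc_i\fermc_j + \fermc_j\fermc_i \simeq 0$, I expand the anticommutator into four pieces: the pure $\partial_y(d_X)$ term contributes $\partial_{y_iy_j}(V)$ by Lemma \ref{lemma:commutator_of_lambdas}; the two cross terms each contribute $-\tfrac{1}{2}\partial_{y_iy_j}(V)$ via the mixed calculation together with symmetry of the Hessian; and the pure $\At$ term is null-homotopic by the first relation. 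The Hessian contributions cancel exactly.

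The main technical point is that the homotopies $[\At_i, \partial_{t_j}]$ and $[\partial_{t_i}, \partial_{y_j}(d_X)]$ produced on $Y\,\check{\otimes}\,X$ must be $k[t]$-linear in order to descend to genuine homotopies on the quotient $Y\l X$. In both cases a one-line check shows this: although $\partial_{t_j}$ is itself only a $k$-linear derivation, the non-$k[t]$-linear contributions in its graded commutator with $\At_i$ (respectively $\partial_{y_j}(d_X)$) cancel because $\At_i$ is $k[t]$-linear (by the lemma preceding Definition \ref{defn:cliffordaction_cut}) and $\partial_{y_j}(d_X)$ is $k[x,y,z]$-linear. There is no deep obstacle in the proof; the real bookkeeping task is keeping track of signs in the graded Jacobi identity and in the fourfold expansion of $\fermc_i\fermc_j + \fermc_j\fermc_i$.
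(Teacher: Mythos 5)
Your proposal is correct and follows essentially the same route as the paper's proof: unfold $\At_i = [d,\partial_{t_i}]$, apply the graded Jacobi identity, and reduce to the scalars $[\partial_{t_i},t_j]=\delta_{ij}$, $[d,\partial_{y_j}(d_X)]=t_j$, together with Lemma \ref{lemma:commutator_of_lambdas} for the $\lambda$--$\lambda$ anticommutators; the cancellation $1 - \tfrac12 - \tfrac12 = 0$ among Hessian terms is exactly the one that appears in the paper. The one point where you go beyond what the paper writes is the explicit remark that the homotopies produced (e.g.\ $[\partial_{t_j},\At_i]$ and $[\partial_{t_i},\partial_{y_j}(d_X)]$) are $k[t]$-linear and hence descend to $Y\l X$; the paper's proof implicitly takes this for granted, and your observation is a harmless and sensible addition, not a divergence in method.
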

\begin{proof}
This will follow from the considerations of Section \ref{section:theequivalence} but we feel it is worthwhile to give a direct proof, both to demonstrate how straightforward it is and in order to have written down explicit homotopies for the various relations.

With $d = d_{Y \otimes X}$ the differential on $Y \l X$, we have by the graded Jacobi identity
\begin{align*}
[\At_i, \At_j] &= [\At_i, [d, \partial_{t_j}]]\\
&= - [\partial_{t_j}, [\At_i, d]] - [d, [\partial_{t_j}, \At_i]]\\
&= [d, -[\partial_{t_j}, \At_i]]\,.
\end{align*}
Hence $[\ferm_i, \ferm_j] = [d, h_{ij}]$ where $h_{ij} = - [ \partial_{t_j}, \At_i ]$. Next we compute
\begin{align*}
[\partial_{y_i}(d_X), \At_j] &= [\partial_{y_i}(d_X), [d, \partial_{t_j}]]\\
&= -[\partial_{t_j}, [\partial_{y_i}(d_X), d]] + [d, [\partial_{t_j}, \partial_{y_i}(d_X)]]\\
&= -[\partial_{t_j}, t_i] + [d, [\partial_{t_j}, \partial_{y_i}(d_X)]]\\
&= -\delta_{ij} + [d, [\partial_{t_j}, \partial_{y_i}(d_X)]]\,.
\end{align*}
Again we set $g_{ij} = [\partial_{t_j}, \partial_{y_i}(d_X)]$ for later use. We compute that
\begin{align*}
[ \fermc_i, \ferm_j] &= \big[- \partial_{y_i}(d_X) - \frac{1}{2} \sum_q \partial_{y_q} \partial_{y_i}(V) \At_{q}, \At_j \big]\\
&= \delta_{ij} - [d, g_{ij}] - \frac{1}{2} \sum_q \partial_{y_q} \partial_{y_i}(V) [ \At_q, \At_j]\\
&= \delta_{ij} - [d, g_{ij}] - \frac{1}{2} \sum_q \partial_{y_q} \partial_{y_i}(V) [d, h_{qj}]\\
&= \delta_{ij} + \big[d, -g_{ij} - \frac{1}{2} \sum_q \partial_{y_q} \partial_{y_i}(V) h_{qj}\big]\,.
\end{align*}
Finally, using Lemma \ref{lemma:commutator_of_lambdas}
\begin{align*}
[ \fermc_i, \fermc_j ] &= \Big[ \partial_{y_i}(d_X) + \frac{1}{2} \sum_q \partial_{y_q} \partial_{y_i}(V) \At_{q}\; , \; \partial_{y_j}(d_X) + \frac{1}{2} \sum_q \partial_{y_q}\partial_{y_j}(V) \At_{q} \Big]\\
&= [ \partial_{y_i}(d_X), \partial_{y_j}(d_X) ] + \frac{1}{2}\sum_q \partial_{y_q}\partial_{y_j}(V) \big[ \partial_{y_i}(d_X), \At_q \big]\\
&\quad + \frac{1}{2}\sum_q \partial_{y_q} \partial_{y_i}(V) \big[ \At_q, \partial_{y_j}(d_X) \big]\\
&\quad + \frac{1}{4}\sum_{p,q} \partial_{y_p} \partial_{y_i}(V) \partial_{y_q} \partial_{y_j}(V) \big[ \At_p, \At_q \big]\\
&= \partial_{y_iy_j}(V) + [d, - \partial_{y_j} \partial_{y_i}(d_X)]\\
&\qquad + \frac{1}{2}\sum_q \partial_{y_q} \partial_{y_j}(V) \left( -\delta_{iq} + [d, [ \partial_{t_q}, \partial_{y_i}(d_X)]] \right)\\
&\qquad + \frac{1}{2} \sum_q \partial_{y_q} \partial_{y_i}(V) \left( -\delta_{jq} + [d, [\partial_{t_q}, \partial_{y_j}(d_X)]] \right)\\
&\qquad + \frac{1}{4}\sum_{p,q} \partial_{y_p} \partial_{y_i}(V) \partial_{y_q} \partial_{y_j}(V) [d, -[\partial_{t_q}, \At_p]]\\
&= [d, c_{ij}]
\end{align*}
where
\begin{align*}
c_{ij} &= - \partial_{y_j} \partial_{y_i}(d_X) + \frac{1}{2} \sum_q \partial_{y_q} \partial_{y_j}(V)  [\partial_{t_q}, \partial_{y_i}(d_X)]\\
&\qquad + \frac{1}{2} \sum_q \partial_{y_q} \partial_{y_i}(V) [\partial_{t_q}, \partial_{y_j}(d_X)]\\
&\qquad - \frac{1}{4} \sum_{p,q}  \partial_{y_p} \partial_{y_i}(V) \partial_{y_q} \partial_{y_j}(V)  [\partial_{t_q}, \At_p]\,.
\end{align*}
\end{proof}

\begin{lemma}\label{prop:morphismsarecliffordreps} Given morphisms $\phi: X \lto X', \psi: Y \lto Y'$, the morphism of matrix factorisations $\psi \l \phi$ is a morphism of $C_m$-representations in the homotopy category.
\end{lemma}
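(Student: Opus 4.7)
The plan is to verify that $\psi \l \phi$ commutes up to $k[x,z]$-linear homotopy with each of the Clifford operators $\ferm_i$ and $\fermc_i$ of Definition \ref{defn:cliffordaction_cut}. Both cases can be reduced to naturality statements already established in the paper. Since the Clifford operators are defined on the quotient $Y \l X = Y \otimes_{k[y]} J_V \otimes_{k[y]} X$, it suffices to exhibit $k[y]$-linear (in fact, $k[x,y,z]$-linear) homotopies on $Y \otimes_{k[y]} X$ and observe that they descend.

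The case of $\ferm_i = \At_i^{Y,X}$ is nothing but Lemma \ref{lemma:atiyahclosed}, which directly provides the homotopy $(\psi \l \phi) \circ \At_i^{Y,X} \simeq \At_i^{Y',X'} \circ (\psi \l \phi)$. For $\fermc_i = -\partial_{y_i}(d_X) - \tfrac{1}{2}\sum_q \partial_{y_q y_i}(V)\, \At_q^{Y,X}$, the sum over $q$ is again handled by Lemma \ref{lemma:atiyahclosed}, noting that the coefficients $\partial_{y_q y_i}(V) \in k[y]$ act identically on the source and target of $\psi \l \phi$ after passing to the Jacobi quotient and thus commute strictly with $\psi \l \phi$. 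For the remaining term, which acts on $Y \otimes_{k[y]} X$ as $1_Y \otimes \partial_{y_i}(d_X)$ on the source and $1_{Y'} \otimes \partial_{y_i}(d_{X'})$ on the target, I would apply Lemma \ref{lemma:naturalityoflambda}: since $\phi : X \lto X'$ is a morphism of matrix factorisations, there is a $k[x,z]$-linear homotopy $\phi \circ \partial_{y_i}(d_X) \simeq \partial_{y_i}(d_{X'}) \circ \phi$ witnessed (up to sign) by $\partial_{y_i}(\phi)$. Tensoring with $\psi$ on the $Y$-side yields the required homotopy $\psi \otimes \partial_{y_i}(\phi)$ on $Y \otimes_{k[y]} X$.

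The only bookkeeping hazards are the Koszul signs arising from the graded tensor product and the check that the homotopy $\partial_{y_i}(\phi)$ is $k[y]$-linear, so that it genuinely descends to the quotient $Y' \l X'$; both are immediate once one observes that $\phi$ and $\psi$ are even and that $\partial_{y_i}(\phi)$ is defined by entrywise differentiation of a polynomial matrix. I do not anticipate any serious obstacle: the proof amounts to taking a $k[y]$-linear combination of the two naturality results already proved in Sections \ref{section:partial} and the Atiyah-class naturality of Lemma \ref{lemma:atiyahclosed}, and noting that everything passes to the Jacobi quotient.
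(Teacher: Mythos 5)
Your proof is correct and follows exactly the route the paper takes: the paper's proof of this lemma is the one-line observation that it "follows easily from the fact that Atiyah classes are natural (Lemma \ref{lemma:atiyahclosed}) and the homotopies $\partial_{y_i}(d_X)$ are central (Lemma \ref{lemma:naturalityoflambda})." You have simply unwound that remark: $\ferm_i$ is an Atiyah class so Lemma \ref{lemma:atiyahclosed} applies directly, the $\partial_{y_q y_i}(V)\At_q$ summands in $\fermc_i$ are handled by combining Atiyah naturality with $J_V$-linearity of $\psi\l\phi$, and the leading term $-\partial_{y_i}(d_X)$ is handled by Lemma \ref{lemma:naturalityoflambda}, which produces a homotopy (namely $\partial_{y_i}(\phi)$, a $k[x,y]$-linear — not $k[x,z]$-linear as you briefly write — matrix that descends to the Jacobi quotient after tensoring with $\psi$).
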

\begin{proof}
This follows easily from the fact that Atiyah classes are natural (Lemma \ref{lemma:atiyahclosed}) and the homotopies $\partial_{y_i}(d_X)$ are central (Lemma \ref{lemma:naturalityoflambda}).
\end{proof}

\begin{proposition}\label{prop:cutisafunctor} The cut operation defines a functor
\be\label{eq:cutisafunctor}
\L(V,U) \otimes_k \L(W,V) \lto \L(W, U)
\ee
\end{proposition}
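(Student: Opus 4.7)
The plan is to promote the cut operation, which was just verified to be functorial on morphisms of finite rank matrix factorisations, through both the idempotent completion and the Clifford thickening appearing in the definition $\L(W,V) = (\hmf(k[x,y], V - W)^\omega)^\bullet$. The essential technical work has already been done: Theorem \ref{theorem:cut_is_rep} supplies a $C_m$-representation structure on $Y \l X$ coming from the $y$-variables, while naturality of the Atiyah classes (Lemma \ref{lemma:atiyahclosed}) and centrality of the homotopies $\partial_{y_i}(d_X)$ (Lemma \ref{lemma:naturalityoflambda}) control their interaction with morphisms of the inputs.

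First I would define the cut on objects. Given $(X, a) \in \L(W,V)$ with Clifford generators $\{\alpha_i, \alpha_i^\dagger\}_{i=1}^a$ and $(Y, b) \in \L(V, U)$ with generators $\{\beta_j, \beta_j^\dagger\}_{j=1}^b$, I would equip the matrix factorisation $Y \l X$ with the idempotent $e_Y \l e_X$ (lifted through cut by its functoriality on morphisms of matrix factorisations) together with the $a+m+b$ families of odd closed operators $1_Y \l \alpha_i, 1_Y \l \alpha_i^\dagger$, the Atiyah operators $\ferm_k, \fermc_k$ from Definition \ref{defn:cliffordaction_cut}, and $\beta_j \l 1_X, \beta_j^\dagger \l 1_X$. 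Using the algebra isomorphism $C_a \otimes_k C_m \otimes_k C_b \cong C_{a+m+b}$ from \eqref{eq:algebra_A_additive}, these data would exhibit $(Y \l X, a+m+b)$ as an object of $\L(W,U)$, provided the generators satisfy Clifford relations up to homotopy.

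Checking those Clifford relations is the substance of the argument. Relations within each block are immediate: for the two outer blocks, from functoriality of cut applied to the pre-existing relations on the $\alpha$ and $\beta$ generators; for the middle Atiyah block, from Theorem \ref{theorem:cut_is_rep}. Cross-relations between the $\alpha$ block and the $\beta$ block vanish strictly, since these operators live on different tensor factors. The interesting cross-relations are those between the Atiyah operators and the inherited operators, and here naturality of Atiyah classes (Lemma \ref{lemma:atiyahclosed}) applied to the $\alpha_i, \beta_j$ themselves (viewed as $2$-morphisms of matrix factorisations) produces the required homotopies for the $\ferm_k$ part, while Lemma \ref{lemma:naturalityoflambda} handles the $\partial_{y_i}(d_X)$ contribution to $\fermc_k$.

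On $2$-morphisms, by Lemma \ref{lemma:equivcliffordthick} a morphism $\phi: (X, a) \lto (X', a')$ in $\L(W,V)$ may be represented as a morphism of matrix factorisations $\phi: X \lto X'$ annihilating certain Clifford generators, and similarly for $\psi: (Y,b) \lto (Y', b')$. I would take $\psi \l \phi$ to be the underlying cut of matrix factorisations; the required annihilation conditions then follow from functoriality of cut for the $\alpha$ and $\beta$ blocks, and from Lemma \ref{prop:morphismsarecliffordreps} for the Atiyah block. Preservation of composition reduces via Lemma \ref{lemma:equivcliffordthick} to the already established identity $(\psi' \l \phi')(\psi \l \phi) = (\psi'\psi) \l (\phi'\phi)$; preservation of identity and $k$-bilinearity are then automatic. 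The principal obstacle is the cross-relation step above: the bookkeeping will resemble the proof of Theorem \ref{theorem:cut_is_rep} and will likely require explicit graded Jacobi identities combined with the concrete form of $\fermc_k$.
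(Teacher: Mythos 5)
Your proposal is correct and follows essentially the same route as the paper: lift through the idempotent completion via functoriality of the cut, assemble the $C_a \otimes C_m \otimes C_b \cong C_{a+m+b}$-action from the inherited generators on the two tensor factors plus the new Atiyah/$\partial_y$ generators, and use naturality of Atiyah classes and centrality of the $\partial_{y_i}(d_X)$ to get the cross-block Clifford relations. The paper compresses the anticommutation checks into a single citation of Lemma~\ref{prop:morphismsarecliffordreps}, whereas you unfold them back to Lemmas~\ref{lemma:atiyahclosed} and~\ref{lemma:naturalityoflambda}; this is a matter of exposition rather than of substance (note only that those lemmas are stated for degree-zero morphisms and must be rerun for the odd Clifford operators using graded commutators, which both your sketch and the paper implicitly do).
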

\begin{proof}
What we have done so far in this section defines a functor
\[
\hmf( k[y,z], U - V ) \otimes \hmf( k[x,y], V - W ) \lto \hmf( k[x,z], U - W )^{\omega \bullet}
\]
and since the latter category is idempotent complete, this lifts to a functor
\[
\hmf( k[y,z], U - V )^{\omega} \otimes \hmf( k[x,y], V - W )^{\omega} \lto \hmf( k[x,z], U - W )^{\omega \bullet}\,.
\]
Explicitly, this functor is given for $(Y,e_Y) \in \L(V,U)$ and $(X,e_X) \in \L(W,V)$ by
\[
(Y, e_Y) \l (X, e_X) = (Y \l X, e_Y \l e_X)\,.
\]
Now suppose we have representations $( Y, \{ \rho_j, \rho_j^\dagger \}_{j=1}^r )$ of $C_r$ in $\L(V,U)$ and $( X, \{ \tau_l, \tau^\dagger_l \}_{l=1}^s )$ of $C_s$ in $\L(W,V)$. Here $Y,X$ may come equipped with idempotents but these are easily handled so we ignore them in what follows. Since the cut operation is functorial, we have representations of $C_r$ and $C_s$ on $Y \l X$ given by
\[
\big\{ \rho_j \l 1_X, \rho_j^\dagger \l 1_X \big\}_{j=1}^r\,, \qquad \big\{ 1_Y \l \tau_l, 1_Y \l \tau_l^\dagger \big\}_{l=1}^s\,.
\]
The operators from the action of $C_r$ anticommute with the operators from the action of $C_s$ because they act on different components of a tensor product. Moreover, both actions anticommute with the new $C_m$ action $\{ \ferm_i, \fermc_i \}_{i=1}^m$ coming from the cut, by Lemma \ref{prop:morphismsarecliffordreps}. Thus $Y \l X$ is equipped with the structure of a $C_r \otimes C_m \otimes C_s \cong C_{r+m+s}$-representation.
\end{proof}


\begin{definition} A \textsl{(super)bicategory without units} has all the data and satisfies all the conditions of a (super)bicategory, with the exception of the existence of unit $1$-morphisms, unitors, and their coherence diagram.
\end{definition}

\begin{definition} The superbicategory without units $\L$ has
\begin{itemize}
\item \textbf{Objects:} same as $\LG$, that is, potentials $(x, W)$.
\item \textbf{$1$- and $2$-morphisms:} defined by \eqref{eq:defnluv}.
\item \textbf{Composition:} defined by the cut operation \eqref{eq:cutisafunctor}.
\item \textbf{Associators:} given a triple of composable $1$-morphisms
\begin{equation}\label{eq:composable_triple}
\xymatrix@C+2pc{
(q, Q) & (y, U) \ar[l]_-{Z} & (z,V) \ar[l]_-{Y} & (x,W) \ar[l]_-{X}
}\,.
\end{equation}
with $V \in k[z_1,\ldots,z_m]$ and $U \in k[y_1,\ldots,y_p]$ the isomorphism
\begin{gather*}
\alpha: Z \l (Y \l X) \lto (Z \l Y) \l X,\\
z \otimes (y \otimes x) \mapsto (z \otimes y) \otimes x
\end{gather*}
is $C_m \otimes C_p$-linear (see Lemma \ref{lemma:associator} below) and is the associator for $\L$.
\end{itemize}
\end{definition}

\begin{lemma}\label{lemma:associator} The morphism $\alpha: (Z \l Y) \l X \lto Z \l (Y \l X)$ is $C_m \otimes C_p$-linear.
\end{lemma}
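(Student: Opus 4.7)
The plan is to exploit the fact that $\alpha$ is essentially the identity on the underlying $\mathbb{Z}_2$-graded $k[x,q]$-module: both sides are canonically identified with $Z \otimes_{k[y]} J_U \otimes_{k[y]} Y \otimes_{k[z]} J_V \otimes_{k[z]} X$, equipped with differential $d_Z \otimes 1 \otimes 1 + 1 \otimes d_Y \otimes 1 + 1 \otimes 1 \otimes d_X$. So $\alpha$ is automatically a morphism of matrix factorisations, and the content of the lemma is that each generator of $C_m \otimes C_p$ acts the same way on both bracketings. The two key geometric inputs are: (i) $X$ lives over $k[x,z]$ so $d_X$ does not involve the $y$-variables, and (ii) $Z$ lives over $k[y,q]$ so $d_Z$ does not involve the $z$-variables.

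For the $C_p$-action (coming from the $U$-cut, so using the connection with components $\partial_{s_j}$ relative to $s_j = \partial_{y_j}U$), on $(Z \l Y) \l X$ the generators are $\rho_j \l 1_X$ and $\rho_j^\dagger \l 1_X$, where $\rho_j = \At^{Z,Y}_j$ and $\rho_j^\dagger = -\partial_{y_j}(d_Y) - \tfrac{1}{2}\sum_r \partial_{y_ry_j}(U) \At^{Z,Y}_r$. Extending $\partial_{s_j}$ to act trivially on $X$, one obtains using (i) that $[d_X, \partial_{s_j}] = 0$ and $\partial_{y_j}(d_X) = 0$, so
\[
\At^{Z, Y \l X}_j = [d_Z + d_Y + d_X, \partial_{s_j}] = [d_Z + d_Y, \partial_{s_j}] \otimes 1_X = \At^{Z,Y}_j \otimes 1_X,
\]
and likewise $\partial_{y_j}(d_{Y \l X}) = \partial_{y_j}(d_Y) \otimes 1_X$. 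Therefore the generators $\ferm_j, \fermc_j$ of the $U$-cut Clifford action on $Z \l (Y\l X)$ coincide under $\alpha$ with $\rho_j \l 1_X, \rho_j^\dagger \l 1_X$ on $(Z \l Y) \l X$.

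The $C_m$-linearity is entirely symmetric: using (ii), the $V$-cut Atiyah class on $(Z \l Y) \otimes X$ reduces to $1_Z \otimes \At^{Y,X}_i$, and $\partial_{z_i}(d_{Z \l Y}) = 1_Z \otimes \partial_{z_i}(d_Y)$, so the $V$-cut generators on $(Z \l Y) \l X$ match $1_Z \l \gamma_i, 1_Z \l \gamma^\dagger_i$ on $Z \l (Y \l X)$. Anticommutation of the $C_m$- and $C_p$-actions on each side holds because the two actions act on disjoint tensor factors, exactly as in the proof of Proposition \ref{prop:cutisafunctor}. The main obstacle is bookkeeping of connections: the Atiyah classes $\At^{Z,Y}_j$ and $\At^{Z, Y\l X}_j$ are a priori defined using possibly different choices of standard connection on $k\llbracket y \rrbracket$, and similarly for the $V$-cut. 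However, different standard connections yield Atiyah classes that agree up to homotopy by \cite[Section 9]{dm1102.2957}, so it suffices to choose once and for all one connection for $U$ and one for $V$ and extend them trivially to the additional tensor factors; the identifications above then hold on the nose, proving the $C_m \otimes C_p$-linearity of $\alpha$ as required.
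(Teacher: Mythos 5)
Your approach matches the paper's in substance: you observe that $\alpha$ is the identity on the underlying module, and you verify that the Clifford generators given by Definition \ref{defn:cliffordaction_cut} on the two bracketings coincide, using the two ``one-sided'' facts that $d_X$ has no $y$-dependence and $d_Z$ has no $z$-dependence. The paper phrases the Atiyah-class comparison more structurally — identifying both $\At$'s with a single Atiyah class of the doubly-completed object $Z \otimes_{k[y]} k\llbracket y \rrbracket \otimes_{k[y]} Y \otimes_{k[z]} k\llbracket z \rrbracket \otimes_{k[z]} X$ relative to the ring map with both $\partial_y U$ and $\partial_z V$ adjoined — which sidesteps the bookkeeping about bases and connections that you handle explicitly at the end; otherwise your computation is the same one.

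One small slip in the $C_m$ case: Definition \ref{defn:cliffordaction_cut} uses the partial derivative of the \emph{right-hand} factor, so the $V$-cut operator $\fermc_i$ on $(Z \l Y) \l X$ is built from $\partial_{z_i}(d_X)$, not $\partial_{z_i}(d_{Z \l Y})$. Since the inherited generator $1_Z \l \gamma_i^\dagger$ on $Z \l (Y \l X)$ is also built from $\partial_{z_i}(d_X)$, this part of the comparison is literally an equality and your computation of $\partial_{z_i}(d_{Z \l Y})$ is both unnecessary and not the term that actually appears. (Had the formula really used $d_{Z\l Y}$ you would also have to note that $-\partial_{z_i}(d_{Z\l Y})$ and $\partial_{z_i}(d_X)$ are merely homotopic, not equal.) The $C_p$ half of your argument is fine as written, since there it is indeed $\partial_{y_j}(d_Y)$ (resp. $\partial_{y_j}(d_{Y\l X}) = \partial_{y_j}(d_Y) \otimes 1_X$) that appears on both sides. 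With this correction the proof is sound.
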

\begin{proof}
We prove the $C_p$-linearity of $\alpha$, the proof for $C_m$ is similar. The algebra $C_p$ acts on the cut between $Z$ and $Y$ involving the variables $y_1,\ldots,y_p$. The action on $Z \l ( Y \l X ) = Z \otimes J_U \otimes (Y \otimes J_V \otimes X)$ is via $1 \otimes (\partial_{y_i}(d_Y) \otimes 1)$ and the Atiyah classes of 
\[
M = Z \otimes_{k[y]} \widehat{k[y]} \otimes_{k[y]} \big( Y \otimes_{k[z]} J_V \otimes_{k[z]} X \big)
\]
relative to the ring morphism $k[q,x] \lto k[q,x] [ \partial_{y_1} U, \ldots, \partial_{y_p} U ]$. Specifically, the generator $\ferm_i$ of $C_p$ is the commutator $[d_{M}, \partial_s]$ where $s = \partial_{y_i} U$. But this operator can be identified with an Atiyah class of
\begin{equation}\label{eq:assoc_proof_M}
Z \otimes_{k[y]} \widehat{k[y]} \otimes_{k[y]} \big( Y \otimes_{k[z]} \widehat{k[z]} \otimes_{k[z]} X \big)
\end{equation}
relative to the ring morphism
\begin{equation}\label{eq:assoc_proof_M2}
k[q,x] \lto k[q,x] [ \partial_{y_1} U, \ldots, \partial_{y_p} U, \partial_{z_1} V, \ldots, \partial_{z_m} V ]\,.
\end{equation}
By the same argument the action of $C_p$ on $(Z \l Y ) \l X$ is via $(1 \otimes \partial_{y_i}(d_Y)) \otimes 1$ and the Atiyah classes of $\big( Z \otimes_{k[y]} \widehat{k[y]} \otimes_{k[y]} Y \big) \otimes_{k[z]} \widehat{k[z]} \otimes_{k[z]} X$ relative to the same ring morphism \eqref{eq:assoc_proof_M2}. It is therefore clear that the associator $\alpha$ identifies these two actions.
\end{proof}

\begin{proposition} Thus defined $\L$ is a superbicategory without units.
\end{proposition}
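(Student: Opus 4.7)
The plan is to combine three ingredients: the pentagon axiom for the associator $\alpha$, compatibility of the cut with the grading shift in each variable, and the general assembly machinery of Appendix \ref{section:constructing_superbicategories} which manufactures a superbicategory (without units here) from a bicategory whose hom-categories are supercategories and whose composition respects the shift. The necessary input is already in place: $\L(W,V)$ is a supercategory by Lemma \ref{lemma:embedcliffordthick}, composition by the cut is a functor by Proposition \ref{prop:cutisafunctor}, and the associator $\alpha$ exists and is $C_m \otimes C_p$-linear by Lemma \ref{lemma:associator}.

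First I would verify the pentagon axiom on four composable $1$-morphisms $W \xlto{X} V \xlto{Y} U \xlto{Z} Q \xlto{T} P$. At the level of underlying $k[x,p]$-linear maps the pentagon commutes strictly, because $\alpha$ is the canonical reassociator of tensor products pushed through the Jacobi quotients and tensor product of modules is strictly associative; the three Jacobi quotients (at $V$, $U$, $Q$) involve disjoint sets of variables and so do not interfere with one another. What remains is $C$-linearity: the pentagon must commute as a diagram of $2$-morphisms carrying the three Clifford actions generated by the three cuts plus any preexisting actions on $X,Y,Z,T$. Each such compatibility is an iterated version of the argument of Lemma \ref{lemma:associator}, reinterpreting a commutator $[d,\partial_s]$ on an intermediate cut as an Atiyah class of the single multi-completed tensor product relative to an enlarged ring morphism of the form $k[q,x] \lto k[q,x][\partial U, \partial V, \partial Q]$; any preexisting actions commute trivially with $\alpha$ since they act on single tensor factors which $\alpha$ merely reparenthesises.

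Next I would supply the shift-compatibility data for the cut: natural $2$-isomorphisms
\[
\tau_1: Y \l X[1] \lto (Y \l X)[1]\,, \qquad \tau_2: Y[1] \l X \lto (Y \l X)[1]
\]
induced by the tensor-product shift isomorphisms \eqref{eq:shift_iso_a} and \eqref{eq:shift_iso_b}. These descend from $Y \otimes X$ to the cut because the Jacobi ideal $(\partial V)$ is generated in even degree, hence is respected by the $\mathbb{Z}_2$-grading. The $C_m$-linearity of $\tau_1, \tau_2$ with respect to the Clifford action of Definition \ref{defn:cliffordaction_cut} is immediate from the naturality of Atiyah classes (Lemma \ref{lemma:atiyahclosed}) and the centrality up to homotopy of the partials $\partial_{y_i}(d_X)$ (Lemma \ref{lemma:naturalityoflambda}), since $\ferm_i$ and $\fermc_i$ are constructed from these ingredients. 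Compatibility of $\tau_1, \tau_2$ with preexisting Clifford actions on $Y$ or $X$ is a direct sign check. With the pentagon and these $\tau$'s verified, the Appendix machinery assembles the superbicategory structure (without units) on $\L$ directly, providing all the remaining coherence for $\gamma_X$ and the $\Psi$-data as in the proof for $\LG$.

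The main obstacle will be combinatorial rather than conceptual: while every individual calculation is short, the pentagon involves three simultaneously acting Clifford actions (plus possibly preexisting ones) whose compatibility with $\alpha$ must be checked componentwise, and the coherence hexagons attached to $\tau_1, \tau_2$ in the Appendix must each be verified. No new technique is required beyond the graded Jacobi identity manipulation already displayed in Theorem \ref{theorem:cut_is_rep}, the naturality results of Section \ref{section:partial}, and the reinterpretation of commutators of the form $[d, \partial_s]$ as Atiyah classes of a multi-completed tensor product exploited in Lemma \ref{lemma:associator}.
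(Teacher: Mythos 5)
Your proposal is correct and follows essentially the same route as the paper, which simply observes that the supercategory structures, functoriality of the cut, and $C$-linearity of $\alpha$ are already in place and that the only remaining check is coherence of the associator, declared trivial. Your text merely unpacks what the paper compresses into that one sentence (pentagon via strict associativity of tensor products over disjoint variable sets, the shift isomorphisms descending to the cut, and the Appendix assembly), with the minor caveat that your separate worry about ``$C$-linearity of the pentagon'' is already subsumed by knowing each arrow in the pentagon is a $2$-morphism in the Clifford thickening, so strict commutativity of the underlying maps suffices.
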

\begin{proof}
The only thing to check is the coherence of the associator, which is trivial.
\end{proof}

\begin{remark}\label{remark:relation_to_toby_paper} The cut operation presented here refines earlier work in \cite{dm1102.2957}. To see the connection, recall the idempotent $e_m \in C_m$ of Definition \ref{defn:idempotent_e} which corresponds to projection onto $k \cdot 1 \subset S_m$. The projector onto top degree $k \cdot \theta_1 \cdots \theta_m \subset S_m$ is 
\[
e_m' = \fermc_1 \cdots \fermc_m \ferm_m \cdots \ferm_1\,.
\]
Observe that each of the $\fermc_i$'s acts as $- \partial_{y_i}(d_X)$ because all of the $\ferm_j$'s to the right. Hence
\begin{align*}
e_m' &= (-1)^{\binom{m}{2}} \fermc_1 \cdots \fermc_m \ferm_1 \cdots \ferm_m \\
&= (-1)^{\binom{m+1}{2}} \lambda_1 \cdots \lambda_m \At_1 \cdots \At_m\,.
\end{align*}
which is precisely the idempotent which appears in \cite[Corollary 10.4]{dm1102.2957}.

In \emph{ibid.} we constructed finite models of convolutions $Y \circ X$ one pair $(Y,X)$ at time. To make this assignment of finite models coherently for all $1$-morphisms in $\LG$ simultaneously it is natural to identify the Clifford actions which underlie the idempotents, as we do here.
\end{remark}

\begin{remark} There is a question of whether or not the $C_m$-action on $Y \l X$ is canonical, since it depends on the choice of connection $\nabla$ and on a choice of homogeneous basis for $X$ and $Y$. But Atiyah classes are independent, up to homotopy, of the choice of connection.
\end{remark}

\section{The equivalence of $\L$ and $\LG$}\label{section:equivllg}

We prove that the cut functor defined in Section \ref{section:lg_cut_system} models composition in $\LG$ by providing an equivalence of this superbicategory with $\L$. This is done by presenting the cut $Y \l X$ with its Clifford action as the solution of the problem of finding a finite model for the matrix factorisation $Y \otimes X$ over $k[x,z]$.

As was mentioned in Section \ref{section:superbicatLG}, $Y \otimes X$ is a matrix factorisation of infinite rank over $k[x,z]$. The obvious notion of a ``finite model'' would be a finite rank matrix factorisation to which $Y \otimes X$ is homotopy equivalent, but providing such a finite model generically seems impossible. We propose a different notion of finite model based on Clifford representations, and solve the problem in this setting. The solution we give here builds on earlier progress on this problem with Dyckerhoff \cite{dm1102.2957}. 

\begin{setup}\label{setupforfusion} The situation is as follows:
\begin{itemize}
\item $(x,W), (y, V), (z,U)$ are objects of $\LG$ with $k[y] = k[y_1,\ldots,y_m]$.
\item $Y$ is a finite rank matrix factorisation of $U - V$ over $k[y,z]$.
\item $X$ is a finite rank matrix factorisation of $V - W$ over $k[x,y]$.
\end{itemize}
\end{setup}

The strategy is to consider the direct sum of (shifted) copies
\begin{equation}\label{eq:larger_object}
S_m \otimes_k ( Y \otimes X ) = (Y \otimes X) \oplus (Y \otimes X)[1] \oplus (Y \otimes X) \oplus \cdots
\end{equation}
where $S_m$ is the exterior algebra viewed as a $\nZ_2$-graded $k$-module
\[
S_m = \bigwedge( k[1]^{\oplus m} )\,.
\]
We find a finite model of this larger object, and in addition record a method of extracting the subobject $Y \otimes X$. Some necessary background is presented in Section \ref{section:homolog_fix} and Section \ref{section:cliffordactkos}, and finally the full construction is presented in Section \ref{section:theequivalence}. The overall strategy is:

\begin{strat}\label{strategy} We show there is a differential $d_K$ ($K$ stands for Koszul) such that
\begin{itemize}
\item[1)] The complex $( S_m \otimes_k ( Y \otimes X ), d_K )$ has a finite model: there is a homotopy equivalence
\be
(Y \l X, 0) \cong \big( S_m \otimes_k ( Y \otimes X ), d_K \big)
\ee
 over $k[x,z]$ where $Y \l X$ is the graded module from Section \ref{section:lg_cut_system}.
\item[2)] The perturbation lemma (Section \ref{section:homolog_fix}) promotes this to a homotopy equivalence
\be\label{eq:woofeif}
(Y \l X, d_{Y \otimes X}) \cong \big( S_m \otimes_k ( Y \otimes X ), d_K + d_{Y \otimes X}\big)
\ee
by mixing in the differential $d_{Y \otimes X}$ on both sides.
\item[3)] Using Section \ref{section:cliffordactkos} we show there is a homotopy equivalence
\be\label{eq:woofeif2}
\big( S_m \otimes_k ( Y \otimes X ), d_K + d_{Y \otimes X}\big) \cong \big( S_m \otimes_k ( Y \otimes X ), d_{Y \otimes X}\big)\,.
\ee
\item[4)] Combining \eqref{eq:woofeif} and \eqref{eq:woofeif2} we have the desired finite model
\be\label{eq:iso_of_clifford_modelfinite}
(Y \l X, d_{Y \otimes X}) \cong ( S_m \otimes_k ( Y \otimes X ), d_{Y \otimes X})\,.
\ee
There is a canonical action of the Clifford algebra $C_m$ on $S_m$ and thus on the right hand side, which ``remembers'' how to extract $Y \otimes X$ from this larger object. Transferring this action yields the operators $\{ \ferm_i, \fermc_i \}_{i=1}^m$ on $Y \l X$ from Section \ref{section:lg_cut_system}.
\end{itemize}
\end{strat}

In conclusion, there is an isomorphism of Clifford representations \eqref{eq:iso_of_clifford_modelfinite} in the homotopy category of finite rank matrix factorisations of $U - W$ over $k[x,z]$, and this is our solution to the problem of finding a finite model of $Y \otimes X$. Once we have established all of this, proving that $\L$ and $\LG$ are equivalent bicategories will be straightforward.

\subsection{Homological perturbation and fixed points}\label{section:homolog_fix}

A fundamental role in this paper is played by the homological perturbation lemma. While this result is usually stated for complexes the standard results generalise immediately to linear factorisations, and in this section we collect these standard results. But we begin by recalling the motivating problem for which the perturbation lemma is the solution.

Let $Q$ be a complex of vector spaces (possibly infinite dimensional) with finite dimensional cohomology, so that $Q$ contains in some sense only a ``finite'' amount of information. One way of making this finiteness manifest is to simply list the cohomology groups of $Q$, but a more flexible and categorical solution is to find a finite representative in the homotopy equivalence class of $Q$.

A \emph{deformation retract} of complexes is a pair of morphisms
\begin{equation}\label{eq:example_defo_retract}
\xymatrix@C+2pc{
C \ar@<-1ex>[r]_\sigma & Q, \ar@<-1ex>[l]_\pi
} \quad \phi
\end{equation}
and a degree $-1$ operator $\phi: Q \lto Q$ satisfying $\pi \sigma = 1_C, \sigma \pi = 1_Q - [d_Q, \phi]$ so that $\pi, \sigma$ are mutually inverse homotopy equivalences. A deformation retract is \emph{strong} if it satisfies some additional conditions, listed below. The point is that if $C$ is finitely generated, then \eqref{eq:example_defo_retract} gives the desired finite representative in the homotopy equivalence class of $Q$. 

\begin{example}\label{example:koszulsplit} Consider the following diagram of complexes of $k$-vector spaces
\[
\xymatrix@C+2pc@R+1pc{
0 \ar[d] \ar@<-1ex>[r] & k[x] \ar@<-1ex>[l] \ar[d]^-{x}\\
k \ar@<-1ex>[r]_-{\sigma} & k[x] \ar@<-1ex>[l]_-{\pi} \ar@/_3pc/@{.>}[u]_-{\phi}
}
\]
in which the first column is the complex $C$ which has $k$ concentrated in degree zero, and the second column $Q$ is the Koszul complex on $x$. The map $\pi: k[x] \lto k$ sends every polynomial to its constant term, while $\sigma: k \lto k[x]$ is the inclusion of the constants. The operator $\phi$ is defined by $\phi( x^n ) = x^{n-1}$ and $\phi(1) = 0$. Together these maps are a deformation retract giving $k$ as the ``finite model'' of $Q$.
\end{example}

Suppose now that a deformation retract computing a finite representative for $Q$ cannot be found immediately, but that we know how to decompose the differential as $d_Q = d + \tau$ in such a way that there is a finite representative for the complex $(Q,d)$. The perturbation lemma allows us, with some conditions, to ``mix in'' the operator $\tau$ to obtain a finite model for the original complex $Q$.
\\

Let $R$ be a commutative ring and $W \in R$. Everything we say applies, in the case where $W = 0$, to both $\mathbb{Z}_2$-graded and $\mathbb{Z}$-graded complexes.  For our purposes the reformulation of the perturbation lemma in terms of fixed points by Barnes and Lambe is more natural, so we emphasise the ``splitting homotopies'' of \cite{barneslambe}.

\begin{definition} A \emph{splitting homotopy} on a linear factorisation $(A,d)$ of $W$ is a degree $-1$ operator $\phi$ on $A$ satisfying
\begin{itemize}
\item[(i)] $\phi^2 = 0$,
\item[(ii)] $\phi d \phi = \phi$.
\end{itemize}
A \emph{morphism} $(A,d, \phi) \lto (A,d',\phi')$ of splitting homotopies is a morphism $\alpha: A \lto A'$ of linear factorisations satisfying $\phi' \alpha = \alpha \phi$.
\end{definition}

The condition (ii) says that $\phi$ is a \emph{fixed point} of the operator $F(x) = x d x$.

\begin{definition} A \emph{strong deformation retract} of linear factorisations of $W$ is a pair of morphisms of linear factorisations
\begin{equation}\label{eq:defn_sdr}
\xymatrix@C+2pc{
(M,d) \ar@<-1ex>[r]_\sigma & (A,d), \ar@<-1ex>[l]_\pi
} \quad \phi
\end{equation}
together with a degree $-1$ operator $\phi: A \lto A$ satisfying
\begin{itemize}
\item[(i)] $\pi \sigma = 1_M$,
\item[(ii)] $\sigma \pi = 1_A - [d, \phi]$,
\item[(iii)] $\phi^2 = 0$,
\item[(iv)] $\phi \sigma = 0$,
\item[(v)] $\pi \phi = 0$.
\end{itemize}
A \emph{morphism} of strong deformation retracts is a a pair of morphisms of linear factorisations $\alpha: (M,d) \lto (M',d')$ and $\beta: (A,d) \lto (A',d')$ satisfying $\beta \sigma = \sigma' \alpha, \alpha \pi = \pi' \beta$, that is, making the two squares implicit in the diagram
\[
\xymatrix@C+2pc{
(M,d) \ar[d]_{\alpha} \ar@<-1ex>[r]_\sigma & (A,d)\ar[d]^\beta \ar@<-1ex>[l]_\pi\\
(M',d') \ar@<-1ex>[r]_{\sigma'} & (A',d') \ar@<-1ex>[l]_{\pi'}
}
\]
commute, and in addition satisfying $\phi' \beta = \beta \phi$.
\end{definition}

\begin{remark}\label{remark:strongdefosigns} We follow the conventions of \cite{barneslambe} in defining strong deformation retracts; up to a sign this the same as the special deformation retracts of \cite{crainic} and \cite{dm1102.2957}. Namely, a strong deformation retract in the above sense is the same as a special deformation retract in the sense of \emph{ibid.} with $h = - \phi$.
\end{remark}

\begin{lemma}\label{lemma:equivocate} There is an equivalence between the category of splitting homotopies $\cat{C}_1$ and the category of  strong deformation retracts $\cat{C}_2$.
\end{lemma}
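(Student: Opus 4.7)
I would set up the equivalence as a pair of quasi-inverse functors $U: \cat{C}_2 \lto \cat{C}_1$ and $S: \cat{C}_1 \lto \cat{C}_2$, the first being straightforward forgetting and the second using an idempotent-splitting construction that recovers the retract data from the homotopy.

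For $U$, given an SDR $(M,A,\sigma,\pi,\phi)$ I send it to the splitting homotopy $(A,d,\phi)$. Condition (iii) already gives $\phi^2 = 0$, so I only need $\phi d \phi = \phi$. Here I would compute
\[
\phi d \phi = \phi ( 1_A - \sigma \pi - \phi d ) = \phi - (\phi \sigma) \pi - \phi^2 d = \phi
\]
using (ii), (iv), and (iii). On morphisms $U$ just keeps the map $\beta: A \lto A'$, and the condition $\phi' \beta = \beta \phi$ is part of the data of a morphism of SDRs.

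For the inverse functor $S$, given a splitting homotopy $(A,d,\phi)$ I form the operator $e = 1_A - [d,\phi] = 1_A - d\phi - \phi d$ and show it is an even, closed, idempotent which I then split. For idempotence I would expand $(d\phi + \phi d)^2$, use $\phi^2 = 0$, $d^2 = W \cdot 1_A$ (so $\phi d^2 \phi = W \phi^2 = 0$), and then $\phi d \phi = \phi$ to collapse $d\phi d\phi + \phi d \phi d$ to $d\phi + \phi d$, giving $e^2 = e$. To see $[d,e] = 0$, I would use the graded Jacobi identity on $[d,[d,\phi]]$ together with $[d,d] = 2 d^2 = 2 W$ central; here the hypothesis that $k$ is a $\mathbb{Q}$-algebra enters to divide by $2$. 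Assuming the ambient category of linear factorisations is idempotent complete (via Remark \ref{remark:idempotent_completion} if necessary), I split $e = \sigma \pi$ with $\pi \sigma = 1_M$ and define $d_M = \pi d \sigma$; the computation $d_M^2 = \pi d e d \sigma = \pi (d^2 e) \sigma = W \cdot 1_M$ shows this is a linear factorisation, and $[d,e]=0$ makes $\sigma, \pi$ chain maps. The remaining SDR conditions (iv) and (v) reduce to $\phi e = 0$ and $e \phi = 0$, which I would verify directly:
\[
\phi e = \phi - \phi d \phi - \phi^2 d = \phi - \phi - 0 = 0, \qquad e \phi = \phi - d \phi^2 - \phi d \phi = 0,
\]
and then $\phi \sigma = \phi \sigma \pi \sigma = \phi e \sigma = 0$, similarly for $\pi \phi$. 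On morphisms $\alpha: (A,d,\phi) \lto (A',d',\phi')$ I note that $\alpha$ commutes with $e$ (since it commutes with $\phi$ hence with $[d,\phi]$), hence descends to $\bar\alpha = \pi' \alpha \sigma: M \lto M'$, and the three compatibilities $\sigma' \bar\alpha = \alpha \sigma$, $\bar\alpha \pi = \pi' \alpha$, $\phi' \alpha = \alpha \phi$ follow from $\sigma' \pi' = e'$ and the hypothesis.

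Finally I would verify $U \circ S = \mathrm{id}_{\cat{C}_1}$ on the nose (the forgetful functor discards exactly the data introduced by $S$) and exhibit a natural isomorphism $S \circ U \cong \mathrm{id}_{\cat{C}_2}$: starting from an SDR, the composite $S \circ U$ produces a possibly different splitting $(\sigma', \pi')$ of the same idempotent $e = \sigma \pi$, and the canonical map $\pi' \sigma: M \lto M'$ is an isomorphism of linear factorisations intertwining all the SDR data. The main technical nuance I expect is the bookkeeping needed to ensure that the idempotent $e$ is closed (the appeal to the graded Jacobi identity and the $\mathbb{Q}$-algebra hypothesis), and the need to work inside an idempotent complete ambient category so that the splitting $(M,\sigma,\pi)$ actually exists; everything else is routine diagram chasing using $\phi d \phi = \phi$ and $\phi^2 = 0$.
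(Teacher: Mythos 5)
Your proof is correct and follows essentially the same route as the paper's (which just recalls the Barnes--Lambe construction and notes it is obviously functorial): forget the retract data to get the splitting homotopy, and recover the strong deformation retract by splitting the idempotent $e = 1_A - [d,\phi]$ with $M = \operatorname{im}(e)$. One small simplification: the closedness $[d,e]=0$ follows directly from $[d,\,d\phi+\phi d] = d^2\phi - \phi d^2 = W\phi - \phi W = 0$, so no appeal to the graded Jacobi identity or to the $\mathbb{Q}$-algebra hypothesis is actually needed for that step.
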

\begin{proof}
We briefly recall the construction from \cite[p.883]{barneslambe}, which is not stated in terms of categories but is obviously functorial. Given a strong deformation retract \eqref{eq:defn_sdr} the data $(A, \phi)$ is a splitting homotopy. In the reverse direction, if $(A, \phi)$ is a splitting homotopy then $e = 1_A - [d, \phi]$ is idempotent and we define the linear factorisation $M = \operatorname{im}(e)$ with the associated projection $\pi$ and inclusion $\sigma$. Then this is a strong deformation retract, together with the original $\phi$.
\end{proof}

Here is the problem that the formalism is designed to solve: suppose $(A,d)$ is a linear factorisation of $W$ and that $\tau$ is a degree $+1$ operator on $A$ such that $(A, d + \tau)$ is a linear factorisation of $V$ (possibly different to $W$). This $\tau$ is called the \emph{perturbation}.

\begin{definition} Given a splitting homotopy $\phi$ on $(A,d)$ and a perturbation $\tau$ in the above sense, the \emph{transference problem} is to find a splitting homotopy $\phi'$ on $(A, d + \tau)$ such that $\im(e) \cong \im(e')$ as graded $R$-modules, where $e = 1_A - [d, \phi]$ and $e' = 1_A - [d + \tau, \phi']$. 
\end{definition}

That is, with $\xi = d + \tau$ the problem is to find a fixed point of the operator
\[
F(x) = x \xi x
\]
among operators with $x^2 = 0$ satisfying the boundary condition $\im(1_A - [d,x]) \cong \im(e)$. If $\phi \tau$ has finite order, i.e. $(\phi \tau)^m = 0$ for some $m$, then
\begin{equation}\label{eq:formula_phi_infty}
\phi_\infty = \sum_{m \ge 0} (-1)^m (\phi \tau)^m \phi = \phi - \phi \tau \phi + \phi \tau \phi \tau \phi - \cdots
\end{equation}
is a solution of this fixed point problem:

\begin{theorem}[Perturbation lemma]\label{theorem:pertlemma} Suppose $\phi \tau$ has finite order. Then $\phi_\infty$ is a splitting homotopy on the linear factorisation $(A, \xi = d + \tau)$ and satisfies the isomorphism condition in the transference problem.
\end{theorem}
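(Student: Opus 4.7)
The plan is to verify the two defining conditions of a splitting homotopy for $\phi_\infty$ on $(A, \xi)$ — namely $\phi_\infty^2 = 0$ and $\phi_\infty \xi \phi_\infty = \phi_\infty$ — and then separately establish the isomorphism condition in the transference problem. The finite-order hypothesis on $\phi\tau$ makes the sum $\phi_\infty = \sum_{m\geq 0}(-1)^m(\phi\tau)^m \phi$ a well-defined degree $-1$ operator on $A$; equivalently $\phi_\infty = (1+\phi\tau)^{-1}\phi = \phi(1+\tau\phi)^{-1}$, from which one reads off the recurrences $\phi_\infty = \phi - \phi\tau\phi_\infty = \phi - \phi_\infty \tau\phi$ that will drive everything below.

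Condition $\phi_\infty^2 = 0$ is immediate: every term $(\phi\tau)^m \phi \cdot (\phi\tau)^n \phi$ in the expansion contains two consecutive $\phi$'s (directly when $n=0$, via the factor $\phi \cdot \phi\tau$ when $n\geq 1$), and so vanishes by $\phi^2 = 0$. For the fixed point condition, the engine is the termwise identity
\[
\phi\, d\, \phi_\infty = \phi_\infty = \phi_\infty\, d\, \phi,
\]
which follows from $\phi d \phi = \phi$ applied summand by summand to the series for $\phi_\infty$. Combining this with the recurrence $\phi_\infty = \phi - \phi_\infty \tau\phi$ yields
\[
\phi_\infty\, d\, \phi_\infty = \phi\, d\, \phi_\infty - \phi_\infty \tau \phi\, d\, \phi_\infty = \phi_\infty - \phi_\infty \tau\phi_\infty,
\]
so $\phi_\infty \xi \phi_\infty = \phi_\infty d\phi_\infty + \phi_\infty \tau \phi_\infty = \phi_\infty$, as required. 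In particular $e' = 1 - [\xi, \phi_\infty]$ is then automatically an idempotent by (the argument in the proof of) Lemma \ref{lemma:equivocate}.

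For the isomorphism $\operatorname{im}(e) \cong \operatorname{im}(e')$ I would produce an explicit perturbed strong deformation retract. Starting from the SDR $(M = \operatorname{im}(e), \pi, \sigma, \phi)$ corresponding to $(A, d, \phi)$ under Lemma \ref{lemma:equivocate}, set $\sigma' = \sigma - \phi_\infty \tau\sigma$ and $\pi' = \pi - \pi\tau\phi_\infty$. Since every term of $\phi_\infty$ begins and ends with $\phi$, the identities $\phi\sigma = 0$ and $\pi\phi = 0$ upgrade to $\phi_\infty \sigma = 0$ and $\pi\phi_\infty = 0$, which immediately give $\pi'\sigma' = \pi\sigma = 1_M$. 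A longer calculation using all the identities collected above, together with $\sigma\pi = 1 - [d,\phi]$ and the consequences $\phi\tau\phi_\infty = \phi - \phi_\infty$ and $\phi_\infty \tau\phi = \phi - \phi_\infty$ of the recurrences, shows that $\sigma'\pi' = 1 - [\xi, \phi_\infty] = e'$. Thus $\sigma'$ and $\pi'$ restrict to mutually inverse isomorphisms of graded $R$-modules $\operatorname{im}(e) \cong \operatorname{im}(e')$.

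The main obstacle is the final verification $\sigma'\pi' = e'$: it is a routine but bookkeeping-heavy calculation in which the relations $\phi d\phi = \phi$, $\pi\phi_\infty = 0$, $\phi_\infty \sigma = 0$, $\phi\tau\phi_\infty = \phi - \phi_\infty$, $\phi_\infty \tau\phi = \phi - \phi_\infty$, and $\phi_\infty d\phi_\infty = \phi_\infty - \phi_\infty \tau\phi_\infty$ must all be deployed simultaneously with careful attention to signs. Everything else is formal manipulation of nilpotent geometric series.
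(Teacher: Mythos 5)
Your proof is correct and is essentially the standard Barnes--Lambe argument that the paper defers to: direct verification of $\phi_\infty^2 = 0$ and of the fixed-point equation $\phi_\infty \xi \phi_\infty = \phi_\infty$ via the geometric series, followed by the explicit formulas $\sigma_\infty = \sigma - \phi_\infty \tau \sigma$ and $\pi_\infty = \pi - \pi \tau \phi_\infty$ for the perturbed strong deformation retract, which indeed agree with the paper's $\sigma - \phi A \sigma$ and $\pi - \pi A \phi$ since $\phi A = \phi_\infty \tau$ and $A\phi = \tau\phi_\infty$. One point is worth making explicit rather than folding into ``routine bookkeeping'': expanding $\sigma_\infty \pi_\infty$ and applying the identities you list (together with $\sigma\pi = 1 - [d,\phi]$) leaves, after all the linear cancellations, a residual quartic term $\phi_\infty(\tau^2 + d\tau + \tau d)\phi_\infty$. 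This does not vanish by formal manipulation of geometric series alone; it vanishes because $\tau^2 + d\tau + \tau d = \xi^2 - d^2 = (V - W)\cdot 1_A$ is a central scalar and $\phi_\infty^2 = 0$. In the chain complex case $\xi^2 = d^2 = 0$ kills this term automatically; in the linear factorisation setting it is exactly the ``minor modification'' the paper alludes to when citing Barnes--Lambe and Carqueville--Murfet.
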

\begin{proof}
The proof for complexes \cite[p.886]{barneslambe} goes through unchanged for linear factorisations, with some minor modifications that are elaborated in \cite[\S 2.5]{lgdual}.
\end{proof}

The more common statement of the basic perturbation lemma involves the deformation retract corresponding to the splitting homotopy $\phi_\infty$ under the equivalence of Lemma \ref{lemma:equivocate}. The statement is that given a splitting homotopy $\phi$ as above with associated deformation retract \eqref{eq:defn_sdr} there is a strong deformation retract
\begin{equation}\label{eq:perturbedsdr}
\xymatrix@C+2pc{
(M,d_\infty) \ar@<-1ex>[r]_{\sigma_\infty} & (A, \xi = d + \tau), \ar@<-1ex>[l]_{\pi_\infty}
} \quad \phi_\infty
\end{equation}
where $C = \tau( 1 + \phi \tau )^{-1} = \sum_{m \ge 0} (-1)^m \tau (\phi \tau)^m$ and
\begin{align*}
\sigma_\infty &= \sigma - \phi C \sigma,\\
\pi_\infty &= \pi - \pi C \phi,\\
d_\infty &= d + \pi C \sigma\,.
\end{align*}
As explained in \cite{barneslambe}, to derive these formulas one has only to notice that the sub-complex in the deformation retract associated to $\phi_\infty$ can, by the isomorphism
\[
\im(1 - [\xi, \phi_\infty]) \cong \im(e) = M
\]
be identified with $M$, and the induced maps are the $\sigma_\infty, \pi_\infty, d_\infty$ given above. It is also possible to give a direct proof as in \cite{crainic}. In applications the most important output of the construction are the formulas \eqref{eq:formula_phi_infty} for $\phi_\infty$ and
\be
\sigma_\infty = \sum_{m \ge 0} (-1)^m (\phi \tau)^m \sigma\,,
\ee
which follows easily from the above.

\subsection{Clifford actions and Koszul complexes}\label{section:cliffordactkos}

Let $R$ be a $k$-algebra and $(X,d)$ a complex of $R$-modules on which $t_1,\ldots,t_m \in R$ act null-homotopically. Tensoring $X$ with the Koszul complex $K(t_i)$ yields the same complex as taking the cone of $t_i \cdot 1_X \simeq 0: X \lto X$, and therefore we have a homotopy equivalence
\be
K(t_i) \otimes_R X \cong \operatorname{cone}( t_i \cdot 1_X ) \cong X \oplus X[1] \cong ( k \oplus k[1] ) \otimes_k X\,.
\ee
Iterating this operation we obtain a homotopy equivalence with $K = K(t_1,\ldots,t_m)$,
\be\label{eq:cliffordactkos_1}
K \otimes_R X \cong \bigwedge( k[1]^{\oplus m} ) \otimes_k X
\ee
where the left hand side is a complex with differential $d_K + d$ and the right hand side has only the differential $d$. On the right hand side we have an obvious action of
\be
C_m = \End_k\left( \bigwedge( k[1]^{\oplus m} ) \right)
\ee
and the purpose of this section is to compute the corresponding action of $C_m$ via closed odd endomorphisms of the complex $K \otimes_R X$. In fact we will do this more generally:

\begin{setup} Let $R$ be a $k$-algebra and write $\otimes$ for $\otimes_R$. Let $(X,d)$ be a linear factorisation of $W \in R$ and $t_1,\ldots,t_m$ a sequence of elements of $R$ acting null-homotopically on $X$. We choose odd $R$-linear operators $\lambda_i$ on $X$ with $[\lambda_i, d] = t_i \cdot 1_X$ for $1 \le i \le m$.
\end{setup}

\begin{remark} All equalities in this section mean equalities of linear maps. If we mean homotopy, we will explicitly write $\simeq$.
\end{remark}

We introduce formal variables $\theta_1,\ldots,\theta_m$ of odd degree and set
\[
S_m = \bigwedge\left( k \theta_1 \oplus \cdots \oplus k \theta_m \right)\,.
\]
On this $\nZ$-graded $k$-module there are canonical odd $k$-linear operators $\theta_i, \theta_i^*$ defined as in Definition \ref{defn:contraction}. The Koszul complex $K$ of the sequence $t_1,\ldots,t_m$ is defined by
\begin{equation}\label{defn:koszul}
K = S_m \otimes_k R, \qquad d_K = \sum_{i=1}^m t_i \theta_i^*\,.
\end{equation} 
The tensor products $K \otimes X = (K \otimes X, d + d_K)$ and $S_m \otimes_k X = ( S_m \otimes_k X, d )$ are both linear factorisation of $W$ with isomorphic underlying graded modules. On this underlying graded module $K \otimes X \cong S_m \otimes_k X$ we have the odd $R$-linear operators $\theta_k^* = \theta_k^* \otimes 1$ and $\theta_k = \theta_k \otimes 1$ and we introduce the following even $R$-linear operators
\begin{align*}
\delta &= \sum_{i=1}^m \lambda_i \theta_i^*,\\
\exp(-\delta) &= \sum_{n \ge 0} (-1)^n \frac{1}{n!} \delta^n\,.
\end{align*}
Observe that $\delta$ is nilpotent so the exponential makes sense.

\begin{proposition}\label{prop:equivalencekoszul} There are mutually inverse isomorphisms of linear factorisations
\be\label{eq:equivalencekoszul}
\xymatrix@C+5pc{ K \otimes X\; \ar@<1ex>[r]^-{ \exp(\delta) } & \;S_m \otimes_k X \ar@<1ex>[l]^-{ \exp(-\delta) } }
\ee
where the left hand side has differential $d + d_K$ and the right hand side has $d$.
\end{proposition}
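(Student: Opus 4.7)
The plan. The operator $\delta = \sum_i \lambda_i \theta_i^*$ is even (as a sum of products of two odd operators) and nilpotent: since the $\theta_i^*$ mutually anticommute and square to zero, we have $\delta^{m+1}=0$. Hence $\exp(\pm\delta)$ are well-defined finite polynomial expressions in $\delta$, are manifestly mutually inverse, and are even $R$-linear automorphisms of the common underlying graded module $S_m \otimes_k X$. The proposition therefore reduces to the intertwining identity
\[
\exp(-\delta)\, d\, \exp(\delta) \;=\; d + d_K
\]
on $S_m\otimes_k X$.

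Because $\delta$ is even, expanding both exponentials and using $\delta^{m+1}=0$ yields the finite sum
\[
\exp(-\delta)\, d\, \exp(\delta) \;=\; \sum_{n\ge 0}\frac{(-1)^n}{n!}\, \operatorname{ad}_\delta^n(d), \qquad \operatorname{ad}_\delta(X) := [\delta, X].
\]
So it suffices to establish the two identities
\[
[\delta, d] = -d_K \qquad\text{and}\qquad [\delta, d_K] = 0,
\]
since then $\operatorname{ad}_\delta^n(d)$ vanishes for $n\ge 2$ and the series collapses to its $n=0,1$ terms, namely $d + d_K$.

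Both identities follow from the graded Leibniz rule $[d,AB] = [d,A]\,B + (-1)^{|d||A|}A\,[d,B]$ together with three elementary commutation relations: $[d,\theta_i^*]=0$, $[\lambda_i,\theta_j^*]=0$, and $[\theta_i^*,\theta_j^*]=0$. The first two hold because $d$ and $\lambda_i$ are odd operators acting only on the $X$ tensor factor while $\theta_j^*$ is an odd operator acting only on $S_m$, and the third holds by construction of the exterior algebra. Feeding these into the Leibniz expansion of $[d,\lambda_i\theta_i^*]$ leaves only the term $[d,\lambda_i]\,\theta_i^* = t_i\theta_i^*$, so $[d,\delta]=d_K$ and hence $[\delta,d]=-d_K$. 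An analogous expansion of each summand of $[\delta,d_K] = \sum_{i,j}[\lambda_i\theta_i^*,\,t_j\theta_j^*]$ reduces it to an $R$-multiple of $[\theta_i^*,\theta_j^*]$ or $[\lambda_i,\theta_j^*]$, both zero, giving $[\delta,d_K]=0$.

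The only nontrivial bookkeeping is keeping Koszul signs straight while applying the graded Leibniz rule; once the three vanishing relations above are recorded, both commutator computations and the truncation of the series are routine, and the proposition follows.
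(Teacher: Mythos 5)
Your argument is correct and rests on the same two commutator facts as the paper's: $[\delta,d]=-d_K$ and $[\delta,d_K]=0$. The packaging is mildly different: the paper establishes $[d,\exp(-\delta)]=-\exp(-\delta)d_K$ (Lemma~\ref{lemma:pert2}, via the computation of $[d,\delta^n]$ in Lemma~\ref{lemma:pert1}) and then verifies directly that $(d+d_K)\exp(-\delta)-\exp(-\delta)d=0$, whereas you conjugate $d$ and invoke the finite adjoint expansion $\exp(-\delta)\,d\,\exp(\delta)=\sum_n \tfrac{(-1)^n}{n!}\operatorname{ad}_\delta^n(d)$. What your route buys is that the role of $[\delta,d_K]=0$ is made fully explicit -- in the paper it is used silently in the step $\sum_i \delta^i d_K\,\delta^{n-i-1}=n\delta^{n-1}d_K$ of Lemma~\ref{lemma:pert1}, and again in concluding $[d,\exp(-\delta)]+d_K\exp(-\delta)=[d_K,\exp(-\delta)]=0$ in the proof of the proposition. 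Your observation that $\delta$ is even (so $\operatorname{ad}_\delta$ preserves parity, the conjugation formula applies without extra signs, and $\exp(\pm\delta)$ are honest finite even operators) is the same structural point that underlies the paper's unsigned Leibniz expansion. Both proofs are valid; the two $\operatorname{ad}$-identities you isolate are exactly the content of the paper's two lemmas unwound.
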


We break the proof into a series of lemmas:

\begin{lemma}\label{lemma:pert1} $[d, \delta^n] = n \delta^{n-1} d_K$ for $n \ge 1$.
\end{lemma}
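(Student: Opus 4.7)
The plan is to proceed by induction on $n$, reducing everything to two base computations: $[d, \delta] = d_K$ and $[d_K, \delta] = 0$. Since $\delta = \sum_i \lambda_i \theta_i^*$ is a sum of products of two odd operators it is even, so $[d, \delta^n]$ is just $d \delta^n - \delta^n d$ and the graded Leibniz rule delivers the recursion
\[
[d, \delta^n] = [d, \delta] \delta^{n-1} + \delta [d, \delta^{n-1}].
\]

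For the base case $n = 1$, I would expand term by term using $[d, \lambda_i \theta_i^*] = [d, \lambda_i] \theta_i^* - \lambda_i [d, \theta_i^*]$. The first bracket equals $t_i \cdot 1_X$ by the hypothesis $[\lambda_i, d] = t_i$. For the second bracket, $d = 1 \otimes d_X$ and $\theta_i^* = \theta_i^* \otimes 1$ act on different tensor factors of $S_m \otimes_k X$; the Koszul sign rule makes these two odd operators anticommute, so $[d, \theta_i^*] = 0$. Summing over $i$ gives $[d, \delta] = \sum_i t_i \theta_i^* = d_K$.

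Next I would verify that $d_K$ commutes with $\delta$, which reduces to a straightforward reordering inside each summand of $d_K \delta - \delta d_K = \sum_{i,j}\bigl(t_j \theta_j^* \lambda_i \theta_i^* - \lambda_i \theta_i^* t_j \theta_j^*\bigr)$. One invokes centrality of $t_j$, the $R$-linearity of $\lambda_i$, the anticommutation $\theta_j^* \lambda_i = -\lambda_i \theta_j^*$ coming from Koszul signs on different tensor factors, and $\theta_j^* \theta_i^* = -\theta_i^* \theta_j^*$; both sums collapse to the common expression $\sum_{i,j} \lambda_i t_j \theta_i^* \theta_j^*$. Hence $d_K$ commutes with every power of $\delta$.

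With these two facts in hand, the inductive step is immediate: assuming the identity for $n-1$, the Leibniz expansion becomes $d_K \delta^{n-1} + (n-1)\delta \cdot \delta^{n-2} d_K$, and moving $d_K$ past $\delta^{n-1}$ collapses this to $n \delta^{n-1} d_K$. The only step requiring any real care is the Koszul sign bookkeeping underlying $[d, \theta_i^*] = 0$ and $[d_K, \delta] = 0$; the inductive machinery itself is entirely formal.
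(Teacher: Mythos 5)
Your proof is correct and follows essentially the same route as the paper: establish $[d,\delta]=d_K$ by expanding term by term, then pass $d_K$ past powers of $\delta$, which requires $[d_K,\delta]=0$. The only cosmetic difference is that you run the expansion as a two-term Leibniz induction while the paper writes the telescoping sum $\sum_{i=0}^{n-1}\delta^i[d,\delta]\delta^{n-i-1}$ in one step; and you explicitly verify $[d_K,\delta]=0$, a fact the paper uses silently, which is a worthwhile bit of added care.
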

\begin{proof}
The case of $n = 1$ is clear
\[
[d, \delta] = \sum_i [d, \lambda_i] \theta_i^* = d_K\,.
\]
For $n > 1$ we use the case $n = 1$ to show that
\[
[ d, \delta^n ] = \sum_{i=0}^{n-1} \delta^i [d, \delta] \delta^{n-i-1} = \sum_{i=0}^{n-1} \delta^i d_K \delta^{n-i-1} = \sum_{i=0}^{n-1} \delta^i \delta^{n-i-1} d_K = n \delta^{n-1} d_K\,.
\]
\end{proof}

\begin{lemma}\label{lemma:pert2} $[ d, \exp(-\delta) ] = - \exp(-\delta) d_K$ and $[ d, \exp(\delta) ] = \exp(\delta) d_K$.
\end{lemma}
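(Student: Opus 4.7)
The plan is to reduce the statement directly to Lemma \ref{lemma:pert1} by termwise differentiation of the exponential series. First I observe that $\delta$ is nilpotent: since each $\theta_i^*$ squares to zero and distinct $\theta_i^*, \theta_j^*$ anticommute, any product of more than $m$ factors of the form $\lambda_i \theta_i^*$ must vanish, so $\delta^{m+1} = 0$. Therefore $\exp(\pm\delta) = \sum_{n \ge 0} \frac{(\pm 1)^n}{n!}\delta^n$ is a finite sum, and no convergence issues arise.

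Next, since the graded commutator $[d,-]$ is $k$-linear, I can pull it inside the sum and apply Lemma \ref{lemma:pert1} to each term:
\[
[d, \exp(\pm\delta)] \;=\; \sum_{n \ge 0} \frac{(\pm 1)^n}{n!} [d, \delta^n] \;=\; \sum_{n \ge 1} \frac{(\pm 1)^n}{n!} \cdot n\, \delta^{n-1} d_K \;=\; \sum_{n \ge 1} \frac{(\pm 1)^n}{(n-1)!} \delta^{n-1} d_K.
\]
Reindexing with $m = n-1$ and factoring out $\pm d_K$ on the right yields
\[
[d, \exp(\pm\delta)] \;=\; \pm \Bigl( \sum_{m \ge 0} \frac{(\pm 1)^m}{m!} \delta^m \Bigr) d_K \;=\; \pm \exp(\pm\delta)\, d_K,
\]
which specialises to the two claimed identities.

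I do not expect any real obstacle here: all of the genuine content has already been done in Lemma \ref{lemma:pert1}, where the graded Leibniz rule and the commutation of $d_K$ past the $\delta^i$ factors are established. The present lemma is just the formal power-series consequence of that, and the proof is essentially two lines plus a reindexing.
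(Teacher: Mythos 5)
Your proof is correct and follows the same approach as the paper: apply $[d,-]$ termwise to the exponential series, invoke Lemma \ref{lemma:pert1}, and reindex. Your preliminary remark on the nilpotence of $\delta$ (already noted in the text just before the lemma) is a harmless addition; otherwise the argument is identical.
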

\begin{proof}
We prove the first identity using Lemma \ref{lemma:pert1}, the second is the same:
\begin{align*}
[d, \exp(-\delta)] &= \sum_{n \ge 1} \frac{(-1)^n}{n!} [ d, \delta^n ]\\
&= \sum_{n \ge 1} \frac{(-1)^n}{n!} n \delta^{n-1} d_K\\
&= - \sum_{n \ge 0} \frac{(-1)^n}{n!} \delta^n d_K\\
&= - \exp(-\delta) d_K
\end{align*}
\end{proof}

\begin{proof}[Proof of Proposition \ref{prop:equivalencekoszul}]
It suffices to show $\exp(-\delta)$ is a morphism of linear factorisations. But using Lemma \ref{lemma:pert2}
\[
(d + d_K) \exp(-\delta) - \exp(-\delta) d = [d, \exp(-\delta)] + d_K \exp(-\delta) = 0
\]
and similarly for the other equation.
\end{proof}

\begin{definition}\label{defn:transfer_contract} Let $\gamma$ be a homogeneous $R$-linear operator on $S_m \otimes_k X$. We define the \textsl{transfer} $\cat{T}(\gamma)$ to be the homogeneous $R$-linear operator on $K \otimes X$ induced by $\gamma$ using the equivalence \eqref{eq:equivalencekoszul}, that is,
\be
\cat{T}(\gamma) = \exp(-\delta) \gamma \exp(\delta)\,.
\ee
\end{definition}

\begin{remark} We do not necessarily require that $\gamma$ be closed, but since
\be
[\cat{T}(\gamma), d_K + d] = \cat{T}( [\gamma, d] )
\ee
if $\gamma$ is a closed operator then so is $\cat{T}(\gamma)$. 
\end{remark}

Of particular interest are the transfers of the generators $\theta_i, \theta_i^*$ of the Clifford action. The transfer of the contraction operator $\theta_i^*$ is easy:

\begin{lemma} $\cat{T}(\theta_i^*) = \theta_i^*$.
\end{lemma}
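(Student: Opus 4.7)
The plan is to show directly that $\theta_i^*$ commutes with $\exp(\delta)$, which will immediately yield $\cat{T}(\theta_i^*) = \exp(-\delta) \theta_i^* \exp(\delta) = \theta_i^*$. Since $\exp(\delta) = \sum_{n \ge 0} \delta^n/n!$, it suffices to show that $[\theta_i^*, \delta] = 0$ as an operator on $S_m \otimes_k X$, where the bracket is the graded commutator.

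Unwinding the definition $\delta = \sum_{j} \lambda_j \theta_j^*$, I would compute term by term. Both $\theta_i^*$ and $\lambda_j \theta_j^*$ are $R$-linear operators on $S_m \otimes_k X$, with $\theta_i^* = \theta_i^* \otimes 1_X$ acting on the $S_m$-factor and $\lambda_j$ acting on the $X$-factor. Both $\theta_i^*$ and $\lambda_j$ are odd, so by the Koszul sign rule for tensor products of operators,
\[
\theta_i^* \lambda_j = -\lambda_j \theta_i^* \qquad \text{on } S_m \otimes_k X.
\]
Moreover, directly from the definition of contraction in Definition \ref{defn:contraction} one has the graded commutation $\theta_i^* \theta_j^* + \theta_j^* \theta_i^* = 0$ on $S_m$. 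Since $\theta_i^*$ is odd and $\lambda_j \theta_j^*$ is even,
\[
[\theta_i^*, \lambda_j \theta_j^*] = \theta_i^* \lambda_j \theta_j^* - \lambda_j \theta_j^* \theta_i^* = -\lambda_j \theta_i^* \theta_j^* - \lambda_j \theta_j^* \theta_i^* = -\lambda_j \bigl(\theta_i^* \theta_j^* + \theta_j^* \theta_i^*\bigr) = 0.
\]
Summing over $j$ gives $[\theta_i^*, \delta] = 0$.

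From $[\theta_i^*, \delta] = 0$ it follows by induction that $[\theta_i^*, \delta^n] = 0$ for all $n \ge 0$, hence $\theta_i^* \exp(\delta) = \exp(\delta) \theta_i^*$, and therefore
\[
\cat{T}(\theta_i^*) = \exp(-\delta) \theta_i^* \exp(\delta) = \exp(-\delta)\exp(\delta)\theta_i^* = \theta_i^*.
\]
There is no real obstacle here; the only subtlety is to track the signs correctly in the graded commutator of operators on a tensor product, in particular that two odd operators acting on different tensor factors anticommute.
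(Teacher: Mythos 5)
Your proof is correct and follows exactly the argument the paper has in mind; the paper simply compresses it to the one-line observation that $\theta_i^*$ commutes with $\delta$. Your sign bookkeeping (the Koszul rule giving $\theta_i^*\lambda_j = -\lambda_j\theta_i^*$ and the anticommutation $\theta_i^*\theta_j^* + \theta_j^*\theta_i^* = 0$, which is part of Lemma \ref{defn:spinorrep}) is the content of that commutation.
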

\begin{proof}
Since $\theta_i^*$ anticommutes with $\delta$.
\end{proof}

However the calculation of $\cat{T}(\theta_i)$ is much more involved. To compute $\cat{T}(\gamma)$ the strategy is to try and commute $\gamma$ past $\exp(-\delta)$ and see what happens. To this end we must first compute $[\gamma, \delta^m]$. We begin the calculations with some general statements about graded commutators. 

To this end let $A$ be a $\nZ$-graded $\nQ$-algebra and $a,b,c$ homogeneous elements.

\begin{lemma} We have
\begin{align}
[ab,c] &= a[b,c] + (-1)^{|b||c|} [a,c]b\,,\\
[a,bc] &= [a,b]c + (-1)^{|a||b|} b[a,c]\,. \label{eq:commutator_with_product}
\end{align}
\end{lemma}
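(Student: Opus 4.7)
The plan is a direct computation from the definition of the graded commutator $[x,y] = xy - (-1)^{|x||y|} yx$. Both identities are elementary once one expands and collects terms, so the only task is to verify that the sign bookkeeping matches.

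For the first identity, I would expand the left-hand side as
\[
[ab,c] = abc - (-1)^{(|a|+|b|)|c|} cab,
\]
then expand the right-hand side, obtaining
\[
a[b,c] + (-1)^{|b||c|}[a,c]b = abc - (-1)^{|b||c|} acb + (-1)^{|b||c|} acb - (-1)^{|b||c|+|a||c|} cab.
\]
The two middle terms cancel and the sign $(-1)^{|b||c|+|a||c|}$ agrees with $(-1)^{(|a|+|b|)|c|}$, which yields the claim.

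For the second identity, the same strategy works: expand
\[
[a,bc] = abc - (-1)^{|a|(|b|+|c|)} bca,
\]
and
\[
[a,b]c + (-1)^{|a||b|} b[a,c] = abc - (-1)^{|a||b|} bac + (-1)^{|a||b|} bac - (-1)^{|a||b|+|a||c|} bca,
\]
and observe again that the middle terms cancel and the remaining signs match.

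There is no genuine obstacle here; the only mild annoyance is keeping the exponents of $-1$ straight when $|a|$, $|b|$, or $|c|$ varies. Since $A$ is graded over $\nZ$ and the identities are bilinear, it suffices to verify them on homogeneous elements, which the expansions above do. One could equivalently note that the second identity follows from the first by the graded antisymmetry $[a,b] = -(-1)^{|a||b|}[b,a]$, so only one of the two computations is strictly necessary.
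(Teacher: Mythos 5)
Your proof is correct. The paper states this lemma without proof (it is a standard Leibniz rule for the graded commutator), and the direct expand-and-cancel computation you give is exactly what one would supply; the signs check out, and your observation that the second identity follows from the first via graded antisymmetry $[a,b]=-(-1)^{|a||b|}[b,a]$ is also a valid shortcut.
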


We adopt the following notation for iterated commutators

\begin{definition} The iterated commutator $[a,b]^{(n)}$ is defined by $[a,b]^{(0)} = b$ and
\[
[a,b]^{(n+1)} = [a, [a,b]^{(n)}]\,.
\]
Observe that $[a,b]^{(n)}$ is a linear function of $b$, but not $a$.
\end{definition}

\begin{lemma}\label{lemma:woop3} For $n \ge 1$ we have
\[
[a,b^n] = \sum_{i=0}^{n-1} (-1)^{i|a||b|} b^i [a,b] b^{n-i-1}\,.
\]
\end{lemma}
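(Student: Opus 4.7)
The plan is to prove this by induction on $n \ge 1$, using the graded Leibniz rule for commutators given in \eqref{eq:commutator_with_product}, namely $[a, bc] = [a,b]c + (-1)^{|a||b|} b [a,c]$. The base case $n = 1$ is immediate: the right-hand side reduces to the single term $i=0$, giving $(-1)^0 b^0 [a,b] b^0 = [a,b]$.

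For the inductive step, assume the formula holds for some $n \ge 1$. Then I would write $b^{n+1} = b \cdot b^n$ and apply \eqref{eq:commutator_with_product} to obtain
\[
[a, b^{n+1}] = [a,b] b^n + (-1)^{|a||b|} b \, [a, b^n].
\]
Substituting the inductive hypothesis into the second term gives
\[
(-1)^{|a||b|} b \sum_{i=0}^{n-1} (-1)^{i|a||b|} b^i [a,b] b^{n-i-1} = \sum_{i=0}^{n-1} (-1)^{(i+1)|a||b|} b^{i+1}[a,b] b^{n-i-1},
\]
and after reindexing $j = i+1$ this becomes $\sum_{j=1}^{n} (-1)^{j|a||b|} b^{j}[a,b] b^{n-j}$. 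Combining with the leading term $[a,b]b^n$ (which is the $j=0$ summand) yields exactly $\sum_{j=0}^{n} (-1)^{j|a||b|} b^j [a,b] b^{n-j}$, completing the induction.

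There is no real obstacle here: the argument is a routine graded-algebra manipulation, and the only thing to be careful about is the sign bookkeeping when the factor $b$ is commuted into the sum, which produces the shift $i \mapsto i+1$ in the exponent of $(-1)^{|a||b|}$. No additional hypotheses beyond what $A$ being a $\nZ$-graded $\nQ$-algebra provides are needed (in fact $\nQ$-linearity is not used at all in this lemma).
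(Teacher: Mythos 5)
Your proof is correct and follows exactly the approach the paper intends (the paper's proof is just the one-line remark ``Easy proof by induction using \eqref{eq:commutator_with_product}''). The sign bookkeeping in the inductive step is handled correctly.
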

\begin{proof}
Easy proof by induction using \eqref{eq:commutator_with_product}.
\end{proof}

\begin{lemma}\label{lemma:woop4} For $n \ge 1$ we have
\be
a^n [a, b] = \sum_{\substack{0 \le i,j \le n \\ i+j=n}} \binom{n}{i} [a,b]^{(i+1)} a^j\,.
\ee
\end{lemma}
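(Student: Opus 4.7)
The plan is to recognise the identity as the binomial expansion of left multiplication by $a$, decomposed as commutator plus right multiplication. Introduce the $k$-linear operators on $A$ given by $T_a(x) = ax$, $R_a(x) = xa$, and $L_a(x) = [a,x]$. In the application of interest $a = \delta$ is even, in which case $T_a = L_a + R_a$ as operators, and a direct computation shows that $L_a$ and $R_a$ commute (both $L_a R_a$ and $R_a L_a$ send $x$ to $axa - xa^2$). The ordinary binomial theorem then gives
\[
T_a^n = \sum_{i+j=n} \binom{n}{i} L_a^i R_a^j\,.
\]
Applying both sides to $[a,b]$, using $R_a^j(y) = y a^j$ together with $L_a^i R_a^j = R_a^j L_a^i$, and observing that $L_a^i([a,b]) = L_a^{i+1}(b) = [a,b]^{(i+1)}$ by the very definition of the iterated commutator, one reads off the claimed formula.

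Alternatively one can argue by induction on $n$. The base case $n=1$ asserts $a[a,b] = [a,b]a + [a,[a,b]]$, which is just the identity $[a,c] = ac - ca$ rearranged with $c = [a,b]$. For the inductive step, multiply the $n$th identity by $a$ on the left, apply the base-case formula $ac = ca + [a,c]$ to each term $[a,b]^{(i+1)}$, and combine the two resulting sums via Pascal's identity $\binom{n}{i-1} + \binom{n}{i} = \binom{n+1}{i}$; the boundary coefficients $\binom{n+1}{0}$ and $\binom{n+1}{n+1}$ absorb the endpoint terms of the induction.

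The main thing to be careful about is the graded sign convention. For homogeneous $a$ of odd degree the graded commutator $[a,c] = ac - (-1)^{|a||c|} ca$ introduces extra signs, so $L_a$ and $R_a$ no longer strictly commute and the formula as stated would need to be decorated with parity-dependent signs. However, in Section~\ref{section:cliffordactkos} the only $a$ to which the lemma is applied is $\delta = \sum_i \lambda_i \theta_i^*$, a sum of products of two odd operators and hence even, so these signs trivialise and the identity holds as written. No serious computation is involved once this observation is made.
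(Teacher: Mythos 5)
Your proposal is correct, and the operator-decomposition argument is a genuinely different route from the paper's, which proceeds by induction and combines coefficients via Pascal's identity (your alternative induction is essentially the paper's proof verbatim). Recognising the identity as the binomial theorem for $T_a = L_a + R_a$ applied to $[a,b]$ is cleaner and explains at a glance both where the coefficients $\binom{n}{i}$ come from and why the statement needs a parity hypothesis: for odd $a$ the operators $L_a$ and $R_a$ no longer commute, so $(L_a+R_a)^n$ does not expand binomially. The paper's induction produces the same formula but offers no such structural insight, and in fact silently passes over this very point.

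Your caution about signs is well-placed and worth making precise: as written, with graded commutators, the lemma is actually false for $a$ odd once $n \ge 2$. For instance with $a,b$ both odd one computes $a^2[a,b] = [a,b]^{(3)} + [a,b]^{(1)}a^2$, missing the expected middle term $2\,[a,b]^{(2)}a$, because the sign $(-1)^{|a|\,|[a,b]^{(i+1)}|}$ entering the inductive step depends on $i$. (The case $n=1$ happens to survive when $a,b$ are both odd, which can make the problem easy to miss.) This never bites in the paper, since Lemma \ref{lemma:woop4} is only ever invoked, in Lemma \ref{lemma:commutator_wha} and Theorem \ref{theorem:psik}, with $a = D = \delta$, which is even; but your argument makes the restriction visible from the outset rather than hiding it in the base case.
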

\begin{proof}
By induction, with the case $n = 1$ being clear. Using the inductive hypothesis
\[
a^{n+1} [a, b] = \sum_{i+j=n} \binom{n}{i} a [a,b]^{(i+1)} a^j = \sum_{i+j=n} \binom{n}{i} \Big[ [a,b]^{(i+2)} a^j + [a,b]^{(i+1)} a^{j+1} \Big]\,.
\]
The powers of $a$ range from $0$ to $n+1$ and the coefficient of $a^j$ for $1 \le j \le n$ is
\[
\binom{n}{n-j} [a,b]^{(n-j+2)} + \binom{n}{n-j+1} [a,b]^{(n-j+2)} = \binom{n+1}{n+1-j} [a,b]^{(n-j+2)}\,.
\]
The coefficient of $a^0$ is $[a,b]^{(n+2)}$ and the coefficient of $a^{n+1}$ is $[a,b]^{(1)}$, so we are done.
\end{proof}

\begin{lemma}\label{lemma_iteratedcomm} Let odd elements $a_i, b_i \in A$ be given for $1 \le i \le m$ such that $[a_i, b_j] = 0$ for all $1 \le i,j \le m$ and define
\[
D = \sum_{i=1}^m a_i b_i\,.
\]
Suppose $[b_i,c] = 0$ for $1 \le i \le m$. Then for $n \ge 0$
\be\label{eq:lemma_iteratedcomm}
[D, c]^{(n)} = (-1)^{n|c|} \sum_{1 \le q_1,\ldots,q_n \le m} [a_{q_n}, [a_{q_{n-1}}, \ldots, [ a_{q_1}, c] \cdots ]] b_{q_1} \cdots b_{q_n}\,.
\ee
\end{lemma}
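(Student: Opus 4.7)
The plan is to prove the identity by induction on $n$. The base case $n=0$ is immediate: the right-hand side reduces to an empty chain of brackets and an empty product of $b$'s, giving just $c$, which equals $[D,c]^{(0)}$ by definition.

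For the inductive step, I would abbreviate the iterated bracket and the monomial as
\[
C_{q_1,\ldots,q_n} = [a_{q_n},[a_{q_{n-1}},\ldots,[a_{q_1},c]\cdots]], \qquad B_{q_1,\ldots,q_n} = b_{q_1}\cdots b_{q_n},
\]
so the inductive hypothesis reads $[D,c]^{(n)} = (-1)^{n|c|}\sum_{q_1,\ldots,q_n} C_{q_1,\ldots,q_n} B_{q_1,\ldots,q_n}$. The parity bookkeeping $|C_{q_1,\ldots,q_n}| \equiv n+|c|$ and $|B_{q_1,\ldots,q_n}| \equiv n$ modulo $2$ is what drives the sign computation. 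The key step is then to evaluate each commutator $[a_j b_j, C_{q_1,\ldots,q_n} B_{q_1,\ldots,q_n}]$ via the graded Leibniz identity $[xy,z] = x[y,z] + (-1)^{|y||z|}[x,z]y$.

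This expansion produces two terms. The first, $a_j[b_j, C_{q_1,\ldots,q_n} B_{q_1,\ldots,q_n}]$, vanishes: expanding $[b_j,-]$ as a derivation via \eqref{eq:commutator_with_product} reduces it to instances of $[b_j, a_{q_i}]$, $[b_j, c]$ and $[b_j, b_{q_i}]$, all of which are zero under the hypotheses (the last one being the anticommutation of the $b_i$ among themselves, which holds in the intended applications where $b_i = \theta_i^*$). The second term, after using $[a_j, b_{q_i}] = 0$ to move $a_j$ past $B_{q_1,\ldots,q_n}$, equals
\[
(-1)^{|b_j|(|C|+|B|)}[a_j, C_{q_1,\ldots,q_n}] B_{q_1,\ldots,q_n} b_j = (-1)^{|c|}\, C_{q_1,\ldots,q_n,j} B_{q_1,\ldots,q_n} b_j,
\]
since $|b_j|(|C|+|B|) \equiv |c|$ modulo $2$ and $[a_j, C_{q_1,\ldots,q_n}] = C_{q_1,\ldots,q_n, j}$ by the very definition of the iterated commutator.

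Summing over $j$ and $(q_1,\ldots,q_n)$, absorbing the trailing $b_j$ into the monomial after relabelling $j = q_{n+1}$, and including the prefactor $(-1)^{n|c|}$ from the inductive hypothesis yields
\[
[D,c]^{(n+1)} = (-1)^{(n+1)|c|}\sum_{q_1,\ldots,q_{n+1}} C_{q_1,\ldots,q_{n+1}} B_{q_1,\ldots,q_{n+1}},
\]
which is the desired formula at level $n+1$. The main obstacle is purely the sign discipline; conceptually nothing beyond the graded Leibniz rule enters, and the three commutator vanishings above are mechanical consequences of the standing hypotheses.
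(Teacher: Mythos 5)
Your proof is correct and follows essentially the same route as the paper: induction on $n$, with the inductive step resolved by a Leibniz expansion of $[a_jb_j, C_{q_1,\ldots,q_n}B_{q_1,\ldots,q_n}]$ (the paper expands the commutator by hand and moves $b_i$ and $a_i$ across the monomial, which amounts to the same bookkeeping you packaged via $[xy,z]=x[y,z]+(-1)^{|y||z|}[x,z]y$). You are also right to flag the implicit use of $[b_i,b_j]=0$ in killing the first Leibniz term; the paper's own proof relies on the same fact when it commutes $b_i$ past $b_{\bold{q}}$, and this does not appear among the stated hypotheses of the lemma even though it holds in the intended application $b_i=\theta_i^*$.
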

\begin{proof}
For convenience we introduce, for any tuple of indices $\bold{q} = (q_1,\ldots,q_n)$ of integers $1 \le q_i \le m$, the notation
\begin{gather}
\tau_{\bold{q}} = [ a_{q_n}, [ a_{q_{n-1}}, \ldots [ a_{q_1}, c ] \cdots ] \label{defn:tau}\\
b_{\bold{q}} = b_{q_1} \cdots b_{q_n} \label{defn:bq}\,.
\end{gather}
For $n = 0$ the right hand side of \eqref{eq:lemma_iteratedcomm} is $c$ by convention, so the statement is trivial. We compute using the inductive hypothesis that
\begin{align*}
[D, c]^{(n+1)} &= [ D, [D, c]^{(n)} ]\\
&= \big[D, (-1)^{n|c|} \sum_{\bold{q}} \tau_{\bold{q}} b_{\bold{q}}\big]\\
&= (-1)^{n|c|} \sum_{\bold{q}} \sum_i [ a_i b_i , \tau_{\bold{q}} b_{\bold{q}}]\\
&= (-1)^{n|c|} \sum_{\bold{q}} \sum_i \left( a_i b_i \tau_{\bold{q}} b_{\bold{q}} - \tau_{\bold{q}} b_{\bold{q}} a_i b_i \right)\\
&= (-1)^{n|c|} \sum_{\bold{q}} \sum_i \left( (-1)^{|c|} a_i \tau_{\bold{q}} b_{\bold{q}} b_i - (-1)^n \tau_{\bold{q}} a_i b_{\bold{q}} b_i \right)\\
&= (-1)^{(n+1)|c|} \sum_{\bold{q}, i} [ a_i, \tau_{\bold{q}} ] b_{\bold{q}} b_i
\end{align*}
which completes the proof.
\end{proof}

Eventually we are going to substitute $a_i = \lambda_i, b_i = \theta_i^*$ so that $D = \delta$, and we will take further $c = \gamma$. In this notation, the aim of all of this is to compute the commutators $[c, D^n]$ and then $[c, \exp(-D)]$. Since most of the calculation can be done in $A$, we stick to the general setting as long as possible, and maintain the notation $a_i, b_i, c, D$ and \eqref{defn:tau}, \eqref{defn:bq} of the previous lemma.

\begin{lemma}\label{lemma:commutator_wha} For $n \ge 1$
\[
[ c, D^n ] = \sum_{i=0}^{n-1} (-1)^{(i+1)|c| + 1} \sum_{i \le p \le n-1} \binom{p}{i} \sum_{\bold{q} = (q_1,\ldots,q_{i+1})} \tau_{\bold{q}} b_{\bold{q}} D^{n-i-1}\,.
\]
\end{lemma}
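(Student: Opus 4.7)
The plan is to reduce the claim to Lemma \ref{lemma_iteratedcomm} by means of a Leibniz-type expansion of $D^n c$. The crucial observation is that $D = \sum_i a_i b_i$ is an \emph{even} element of $A$, since each summand is a product of two odd elements. Consequently $[D,-]$ is a parity-preserving derivation, and one has the standard polynomial identity
\[
D^n c \;=\; \sum_{i=0}^{n} \binom{n}{i}\, [D,c]^{(i)}\, D^{n-i},
\]
which I would establish by an easy induction on $n$ using the three ingredients $Dc = cD + [D,c]$, the recursion $D\cdot [D,c]^{(i)} = [D,c]^{(i)}\cdot D + [D,c]^{(i+1)}$ (both valid since $|D|=0$), and Pascal's rule $\binom{n}{i} + \binom{n}{i-1} = \binom{n+1}{i}$.

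From this, and again using that $D$ is even (so that no sign is incurred in swapping $c$ past $D^n$), one extracts the $i=0$ term to obtain
\[
[c, D^n] \;=\; -[D^n, c] \;=\; -\sum_{i=1}^{n} \binom{n}{i}\, [D,c]^{(i)}\, D^{n-i}.
\]
Substituting the evaluation of $[D,c]^{(i)}$ furnished by Lemma \ref{lemma_iteratedcomm}, and then reindexing $i \mapsto i+1$, this becomes
\[
[c, D^n] \;=\; \sum_{i=0}^{n-1} (-1)^{(i+1)|c|+1}\, \binom{n}{i+1} \sum_{\bold{q} = (q_1,\ldots,q_{i+1})} \tau_{\bold{q}}\, b_{\bold{q}}\, D^{n-i-1}.
\]

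It remains to match binomial coefficients: the hockey-stick identity $\binom{n}{i+1} = \sum_{p=i}^{n-1} \binom{p}{i}$ converts this into the form claimed in the lemma. I do not anticipate any genuine obstacle; the combinatorics is entirely packaged into the Leibniz expansion and the hockey-stick identity, and the only thing to watch is sign tracking, which is particularly clean here because $D$ has even parity. (A direct induction on $n$ using the relation $[c, D^{n+1}] = [c,D^n]\,D + D^n\,[c,D]$ is also possible, but then one has to commute powers of $D$ past the Clifford-like words $\tau_{\bold{q}} b_{\bold{q}}$, which leads back to the same combinatorics by a more circuitous route.)
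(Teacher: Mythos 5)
Your proof is correct, and it takes a genuinely different route from the paper's. The paper reaches the double sum via a two-stage expansion: first Lemma \ref{lemma:woop3} to get the Leibniz ``sandwich'' form $\sum_p D^p[c,D]D^{n-p-1}$, then Lemma \ref{lemma:woop4} to push each $D^p$ past $[D,c]$, producing the inner sum $\sum_{i\le p\le n-1}\binom{p}{i}$ as an artefact of this two-step rearrangement. You instead write down the noncommutative binomial expansion $D^n c = \sum_i \binom{n}{i}[D,c]^{(i)} D^{n-i}$ in one step (valid here precisely because $|D|=0$), which lands you directly on $\binom{n}{i+1}$; the hockey-stick identity then converts this to the paper's form. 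Your route is shorter and exposes the fact that the inner sum $\sum_{i\le p\le n-1}\binom{p}{i}$ in the lemma statement is nothing but $\binom{n}{i+1}$ in disguise — indeed, had the lemma been stated in the form you derive, the downstream Lemma \ref{lemma:combin} in the proof of Proposition \ref{prop:ccommutesexpD} would degenerate to the trivial identity $\frac{1}{(t+i+1)!}\binom{t+i+1}{i+1} = \frac{1}{t!(i+1)!}$. The paper's choice to go via Lemmas \ref{lemma:woop3} and \ref{lemma:woop4} is presumably because Lemma \ref{lemma:woop4} is reused separately in Theorem \ref{theorem:psik} to handle the case where $[b_j,c]\ne 0$, where the clean binomial expansion is not available; but for the present lemma your argument is the more economical one.
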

\begin{proof} 
By Lemma \ref{lemma:woop3} we have
\begin{equation}\label{eq:catfish}
[ c, D^n ] = \sum_{p=0}^{n-1} D^p [ c, D ] D^{n-p-1} = -\sum_{p=0}^{n-1} D^p [ c, D ] D^{n-p-1}\,.
\end{equation}
Then using Lemma \ref{lemma:woop4} this becomes
\be
-\sum_{p=0}^{n-1} \sum_{i+j=p} \binom{p}{i} [D, c]^{(i+1)} D^{n-p-1+j}
\ee
which by Lemma \ref{lemma_iteratedcomm} equals
\be
\sum_{p=0}^{n-1} \sum_{i+j=p} \binom{p}{i} (-1)^{(i+1)|c| + 1} \sum_{\bold{q} = (q_1,\ldots,q_{i+1})} \tau_{\bold{q}} b_{\bold{q}} D^{n-i-1}
\ee
which is what we needed to show.
\end{proof}

Putting this all together:

\begin{proposition}\label{prop:ccommutesexpD} Let $A$ be a $\nZ_2$-graded $\nQ$-algebra and
\begin{itemize}
\item Let $a_i, b_i \in A$ be odd elements with $[a_i,b_j] = 0$ for $1 \le i,j \le m$.
\item Assume that $D = \sum_{i=1}^m a_i b_i$ has finite order, that is, $D^r = 0$ for some $r > 0$.
\item Let $c \in A$ be an odd element, with $[b_i, c] = 0$ for $1 \le i \le m$.
\end{itemize}
Then
\be\label{eq:ccommutesexpD}
[c, \exp(-D)] = - \sum_{n \ge 1} \frac{1}{n!} \sum_{q_1,\ldots,q_n} [ a_{q_n}, [ a_{q_{n-1}}, \ldots [ a_{q_1}, c ] \cdots ] b_{q_1} \cdots b_{q_n} \exp(-D)\,.
\ee
\end{proposition}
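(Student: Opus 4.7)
The plan is to substitute Lemma \ref{lemma:commutator_wha} into the series expansion
\[
[c, \exp(-D)] = \sum_{n \geq 1} \frac{(-1)^n}{n!}[c, D^n]
\]
(the $n=0$ term vanishes because $[c, D^0]=[c,1]=0$) and then reorganise. This yields a triple sum over $n \geq 1$, $0 \leq i \leq n-1$ and $i \leq p \leq n-1$, with summand proportional to $\binom{p}{i} \sum_{\bold{q},\,|\bold{q}|=i+1} \tau_{\bold{q}} b_{\bold{q}} D^{n-i-1}$. Since $|c|=1$ the sign $(-1)^{(i+1)|c|+1}$ produced by the lemma reduces to $(-1)^i$, so the scalar coefficient is $\frac{(-1)^{n+i}}{n!}\binom{p}{i}$. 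Reindex by $j=i+1$, which is the tuple length that needs to appear in \eqref{eq:ccommutesexpD}.

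The first key step is to swap the order of summation so that $j$ becomes the outer index. For each fixed $j \geq 1$ the factor $(-1)^{j-1}\sum_{\bold{q},\,|\bold{q}|=j} \tau_{\bold{q}} b_{\bold{q}}$ pulls out on the left, and the residual indices satisfy $p \geq j-1$ and $n \geq p+1$. Substituting $m = n-j$ converts the remaining scalar sum to
\[
\sum_{p \geq j-1} \binom{p}{j-1} \sum_{m \geq p+1-j} \frac{(-1)^{m+j}}{(m+j)!} D^m.
\]
Now swap $p$ and $m$: for fixed $m \geq 0$, $p$ ranges over $j-1 \leq p \leq m+j-1$, so the hockey stick identity gives $\sum_{p=j-1}^{m+j-1}\binom{p}{j-1} = \binom{m+j}{j}$, and the $p$-sum collapses. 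Using $\binom{m+j}{j}/(m+j)! = 1/(j!\,m!)$, the residual becomes $\frac{(-1)^j}{j!}\sum_{m \geq 0} \frac{(-1)^m}{m!} D^m = \frac{(-1)^j}{j!}\exp(-D)$.

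Multiplying the pulled-out sign $(-1)^{j-1}$ by $\frac{(-1)^j}{j!}$ yields $-\frac{1}{j!}$, which is exactly the scalar on the right hand side of \eqref{eq:ccommutesexpD}. The hypotheses $[a_i,b_j]=0$ and $[b_i,c]=0$ enter only through Lemma \ref{lemma:commutator_wha}; afterwards the argument is purely combinatorial, and crucially we never need to move anything past $\exp(-D)$ since the $D$ factor always sits on the right throughout the manipulation. The main obstacle is the bookkeeping: keeping the odd-degree sign convention correct and executing the two nested index swaps so that the hockey stick identity applies cleanly.
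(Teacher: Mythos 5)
Your proposal is correct and takes essentially the same route as the paper: substitute Lemma \ref{lemma:commutator_wha} into the exponential series, reindex (your $m=n-j$ is the paper's $t=n-i-1$), and collapse the binomial sum — your hockey-stick step is precisely the content of the paper's Lemma \ref{lemma:combin}, just identified explicitly rather than packaged as a separate inductive lemma. The sign bookkeeping and the final assembly match the paper's computation.
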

\begin{proof}
Using Lemma \ref{lemma:commutator_wha}
\begin{align*}
[c, \exp(-D)] &= \sum_{n \ge 0} \frac{(-1)^n}{n!} [c, D^n]\\
&= \sum_{n \ge 0} \frac{(-1)^{n+i}}{n!} \sum_{i=0}^{n-1} \sum_{i \le p \le n -1} \binom{p}{i} \sum_{\bold{q} = (q_1,\ldots,q_{i+1})} \tau_{\bold{q}} b_{\bold{q}} D^{n-i-1}\,.
\end{align*}
Rewriting this sum in terms of the variable $t = n - i - 1$ yields
\[
- \sum_{t \ge 0} \sum_{i \ge 0} \frac{(-1)^{t}}{(t+i+1)!} \sum_{i \le p \le t+i}  \binom{p}{i} \sum_{\bold{q} = (q_1,\ldots,q_{i+1})} \tau_{\bold{q}} b_{\bold{q}} D^{t}\,.
\]
Using Lemma \ref{lemma:combin} below this may be rewritten as
\[
- \sum_{t \ge 0} \sum_{i \ge 0} \frac{(-1)^t}{t!} \frac{1}{(i+1)!} \sum_{\bold{q} = (q_1,\ldots,q_{i+1})} \tau_{\bold{q}} b_{\bold{q}} D^{t}
\]
which clearly agrees with \eqref{eq:ccommutesexpD}.
\end{proof}

\begin{lemma}\label{lemma:combin} For integers $a, b \ge 0$
\begin{equation}\label{eq:combin}
\frac{b!}{(a+b+1)!} \sum_{a \le n \le a + b} \binom{n}{a} = \frac{1}{(a+1)!}
\end{equation}
\end{lemma}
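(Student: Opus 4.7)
The identity \eqref{eq:combin} is equivalent, after multiplying both sides by $(a+b+1)!/b!$, to the classical \emph{hockey stick identity}
\[
\sum_{n=a}^{a+b} \binom{n}{a} = \binom{a+b+1}{a+1}\,,
\]
so my plan is to prove this reformulation and then rearrange factorials at the end. I expect this to be entirely routine; there is no real obstacle, and the only decision is which of the two standard arguments to present.

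The shortest route is induction on $b \ge 0$ with $a$ fixed. For the base case $b = 0$ both sides equal $1$, since $\binom{a}{a} = \binom{a+1}{a+1} = 1$. For the inductive step, assuming the identity holds at $b$, Pascal's rule $\binom{a+b+1}{a+1} + \binom{a+b+1}{a} = \binom{a+b+2}{a+1}$ gives
\[
\sum_{n=a}^{a+b+1} \binom{n}{a} \;=\; \Big( \sum_{n=a}^{a+b} \binom{n}{a} \Big) + \binom{a+b+1}{a} \;=\; \binom{a+b+1}{a+1} + \binom{a+b+1}{a} \;=\; \binom{a+b+2}{a+1}\,,
\]
completing the induction.

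If one prefers a one-line bijective argument, it can replace the induction: $\binom{a+b+1}{a+1}$ counts the $(a+1)$-element subsets of $\{0, 1, \ldots, a+b\}$, and stratifying by the largest element $n$ (which must satisfy $a \le n \le a+b$) gives $\binom{n}{a}$ subsets for each $n$, yielding the same sum. Finally, multiplying both sides of $\sum_{n=a}^{a+b} \binom{n}{a} = \binom{a+b+1}{a+1}$ by $b!/(a+b+1)!$ and using $\binom{a+b+1}{a+1} = (a+b+1)!/\big((a+1)! \, b!\big)$ yields \eqref{eq:combin}, so the lemma follows.
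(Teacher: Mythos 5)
Your proof is correct and takes essentially the same route as the paper, which also proves the identity by induction on $b$ (computing $\Omega_{a,b+1}$ in terms of $\Omega_{a,b}$, where $\Omega_{a,b}$ denotes the left-hand side). Your reformulation via the hockey stick identity $\sum_{n=a}^{a+b}\binom{n}{a}=\binom{a+b+1}{a+1}$ cleanly strips away the factorial bookkeeping before running the same induction.
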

\begin{proof}
Denoting the left hand side of \eqref{eq:combin} by $\Omega_{a,b}$ the proof is an induction on $b$, i.e. by computing $\Omega_{a,b+1}$ in terms of $\Omega_{a,b}$.
\end{proof}

Now let us return to the original problem, of calculating the transfer $\cat{T}(\gamma)$ from Definition \ref{defn:transfer_contract}. We will only need this in the following special case:

\begin{corollary}\label{corollary_transfergamma} Let $\gamma$ be an odd $R$-linear operator on $X$. Then
\be\label{eq:transfer_gamma}
\cat{T}(\gamma) = \gamma + \sum_{n \ge 1} \sum_{q_1,\ldots,q_n}  \frac{1}{n!} [\lambda_{q_n}, [\lambda_{q_{n-1}}, \ldots [ \lambda_{q_1}, \gamma] \cdots ]] \theta^*_{q_1} \ldots \theta^*_{q_n}
\ee
is the tranferred operator on $K \otimes X$, where the $q_j$'s range over $1, \ldots, m$.
\end{corollary}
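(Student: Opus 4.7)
The plan is to read this corollary as the specialisation of Proposition \ref{prop:ccommutesexpD} to the operators on $K \otimes X$, so the work is almost entirely bookkeeping. First I would verify the hypotheses of that proposition with $a_i = \lambda_i$, $b_i = \theta_i^*$, $D = \delta = \sum_i \lambda_i \theta_i^*$, and $c = \gamma$. The commutators $[\lambda_i, \theta_j^*]$ and $[\theta_i^*, \gamma]$ vanish because $\lambda_i$ and $\gamma$ act only on the $X$-factor of $S_m \otimes_k X$ while $\theta_i^*$ acts only on the exterior factor, so the graded Koszul signs force these odd operators to anticommute and thus graded-commute to zero. Finite order of $D$ follows because any monomial in the $\theta_i^*$ with a repeated index vanishes, so $\delta^{m+1}=0$.

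Next I would rewrite the transfer in a form that isolates a single commutator with $\exp(-\delta)$. Since each $\lambda_i \theta_i^*$ is even, $\delta$ is even and so is $\exp(-\delta)$; pushing the odd operator $\gamma$ through $\exp(-\delta)$ therefore introduces no sign, giving
\[
\cat{T}(\gamma) \;=\; \exp(-\delta)\,\gamma\,\exp(\delta) \;=\; \big( \gamma \exp(-\delta) - [\gamma, \exp(-\delta)] \big) \exp(\delta) \;=\; \gamma - [\gamma, \exp(-\delta)]\exp(\delta).
\]

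Now I would substitute the explicit formula for $[\gamma, \exp(-\delta)]$ supplied by Proposition \ref{prop:ccommutesexpD}. That formula has a trailing factor of $\exp(-\delta)$, which cancels with the $\exp(\delta)$ on the right, and the overall sign from the proposition combines with the minus sign in the displayed identity to give a plus. What remains is precisely
\[
\cat{T}(\gamma) = \gamma + \sum_{n \ge 1} \frac{1}{n!} \sum_{q_1,\ldots,q_n} [\lambda_{q_n}, [\lambda_{q_{n-1}}, \ldots, [\lambda_{q_1}, \gamma] \cdots ]]\, \theta^*_{q_1} \cdots \theta^*_{q_n},
\]
as claimed. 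There is no real obstacle here: the only thing to watch is the parity check (that $\exp(-\delta)$ is even, so that moving $\gamma$ past it costs no sign) and the sign reconciliation between the statement of Proposition \ref{prop:ccommutesexpD} and the identity above; all the genuine combinatorics was already absorbed into the proof of that proposition.
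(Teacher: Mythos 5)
Your proof is correct and takes essentially the same route as the paper: the paper also reduces the corollary to Proposition \ref{prop:ccommutesexpD} via the substitutions $a_i = \lambda_i$, $b_i = \theta_i^*$, $c = \gamma$ and the identity $\exp(-D)\,c\,\exp(D) = c - [c, \exp(-D)]\exp(D)$. You have merely made explicit the hypothesis-checking (vanishing of $[\lambda_i,\theta_j^*]$ and $[\theta_i^*,\gamma]$, nilpotence of $\delta$) and the parity bookkeeping, which the paper leaves implicit.
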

\begin{proof}
This is immediate from Proposition \ref{prop:ccommutesexpD}, substituting $a_i = \lambda_i, b_i = \theta_i^*,c = \gamma$, since
\[
\exp(-D) c \exp(D) = c - [c, \exp(-D)] \exp(D)\,.
\]
\end{proof}


\begin{remark} Observe that since the $\theta_i^*$ square to zero, only sequences of distinct indices $q_1,\ldots,q_n$ contribute in the sum \eqref{eq:transfer_gamma}. The low order terms are
\be
\cat{T}(\gamma) = \gamma + \sum_q [\lambda_q, \gamma] \theta^*_q + \frac{1}{2} \sum_{p,q} [\lambda_p, [\lambda_q, \gamma]] \theta_q^*\theta_p^* + \cdots
\ee
\end{remark}

Finally, we compute the transfer of the operator $\theta_i$ on $S_m \otimes_k X$:

\begin{theorem}\label{theorem:psik} We have
\begin{equation}\label{eq:psik}
\cat{T}(\theta_i) = \theta_i - \sum_{n \ge 0} \sum_{q_1,\ldots,q_n} \frac{1}{(n+1)!} [ \lambda_{q_n}, [ \lambda_{q_{n-1}}, [ \cdots \big[ \lambda_{q_1}, \lambda_i ] \cdots ] \theta_{q_1}^* \cdots \theta_{q_n}^*\,.
\end{equation}
\end{theorem}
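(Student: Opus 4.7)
The plan is to reduce the computation of $\cat{T}(\theta_i)$ to an integrated version of Corollary \ref{corollary_transfergamma}, exploiting that $[\theta_i, \delta]$ is essentially $\lambda_i$. The first thing I would establish is the bracket $[\theta_i, \delta] = -\lambda_i$. This is a short calculation: $\theta_i$ graded-commutes with each $\lambda_j$ (they act on different tensor factors of $S_m \otimes_k X$ and are both odd), while $[\theta_i, \theta_j^*] = \delta_{ij}$ is the defining Clifford relation; the Leibniz rule $[\theta_i, \lambda_j \theta_j^*] = [\theta_i, \lambda_j]\theta_j^* + (-1)^{|\theta_i||\lambda_j|}\lambda_j[\theta_i, \theta_j^*]$ then gives the claim after summing over $j$.

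Next I would consider the interpolating family $f(s) = \exp(-s\delta) \theta_i \exp(s\delta)$, which is a polynomial in $s$ because $\delta$ is nilpotent. Differentiating and using $|\delta| = 0$ (so no Koszul sign appears when moving $\delta$ past $\theta_i$) gives $f'(s) = \exp(-s\delta)[\theta_i, \delta]\exp(s\delta) = -\exp(-s\delta)\lambda_i \exp(s\delta)$. Since $f(0) = \theta_i$ and $f(1) = \cat{T}(\theta_i)$, termwise integration of the polynomial identity yields
\[
\cat{T}(\theta_i) = \theta_i - \int_0^1 \exp(-s\delta)\lambda_i \exp(s\delta)\, ds\,.
\]

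Now the integrand is the transfer computed in Corollary \ref{corollary_transfergamma} applied to $\gamma = \lambda_i$, but with $\delta$ rescaled to $s\delta$ (equivalently with each $\lambda_j$ replaced by $s \lambda_j$); the hypotheses of the corollary remain valid since $s\delta$ is still nilpotent and $[s\lambda_j, d] = s t_j$. Because each iterated commutator $[\lambda_{q_n}, \ldots [\lambda_{q_1}, \lambda_i]\cdots]$ is $n$-multilinear in the $\lambda$'s, the rescaling produces an overall $s^n$:
\[
\exp(-s\delta)\lambda_i \exp(s\delta) = \lambda_i + \sum_{n \ge 1} \frac{s^n}{n!} \sum_{q_1, \ldots, q_n} [\lambda_{q_n}, \ldots [\lambda_{q_1}, \lambda_i]\cdots] \theta_{q_1}^* \cdots \theta_{q_n}^*\,.
\]
Integrating term by term from $0$ to $1$ converts $s^n/n!$ into $1/(n+1)!$, and the constant $\lambda_i$ becomes the $n=0$ summand with coefficient $1/1!$; plugging back into the previous display produces exactly the claimed formula.

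There is no real obstacle: the only delicate step is the sign bookkeeping in computing $[\theta_i, \delta] = -\lambda_i$, and the rest is formal manipulation of polynomials in the auxiliary variable $s$. Conceptually the theorem is simply the antiderivative of Corollary \ref{corollary_transfergamma} along the one-parameter deformation $\delta \mapsto s\delta$, a perspective that also makes transparent why the factorials shift from $1/n!$ to $1/(n+1)!$ between the two statements.
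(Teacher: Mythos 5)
Your proof is correct and takes a genuinely different route from the paper's. The paper observes that Corollary \ref{corollary_transfergamma} does not apply directly to $\theta_i$ (since $[\theta_j^*, \theta_i] = \delta_{ij} \ne 0$), and instead reworks the combinatorial calculation of Proposition \ref{prop:ccommutesexpD} from the inside: it uses Lemma \ref{lemma:woop4} to rewrite $\delta^p[\delta,\theta_i]$ in terms of iterated commutators, then notes that $[\delta,\theta_i]^{(i+1)} = [\delta,\lambda_i]^{(i)}$ (the index shift from $[\delta,\theta_i]=\lambda_i$), and retraces the argument of Lemma \ref{lemma:commutator_wha} and Proposition \ref{prop:ccommutesexpD} with this adjustment. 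Your argument instead treats the theorem as the antiderivative of the already-proved Corollary \ref{corollary_transfergamma}: you differentiate $f(s)=\exp(-s\delta)\theta_i\exp(s\delta)$, which converts $\theta_i$ into $\lambda_i$ via the bracket $[\theta_i,\delta]=-\lambda_i$, and then apply the corollary to $\gamma=\lambda_i$ (which does satisfy all its hypotheses) at parameter $s$; the $n$-multilinearity of the iterated commutator gives the $s^n$ factor, and $\int_0^1 s^n\,ds = 1/(n+1)$ converts $1/n!$ into $1/(n+1)!$. This is conceptually cleaner in that it isolates the single reason why $\theta_i$ misbehaves (the nonvanishing bracket with $\theta_i^*$), fixes it with one differentiation, and thereafter reuses a black box rather than re-opening its proof; it also makes the factorial shift transparent. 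The paper's approach, by contrast, is more elementary in that it avoids introducing the auxiliary parameter $s$ and formal integration, at the cost of having to track the combinatorial index shift through the earlier lemmas. Both are valid; yours is a clean alternative.
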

\begin{proof}
Observe that Corollary \ref{corollary_transfergamma} and the preceeding developments do not apply directly to $\theta_i$, because $[ \theta_j^*, \theta_i ] = \delta_{ij}$ whereas we have required the commutator $[b_j,c] = 0$ to vanish in the above. However, we do have
\[
[ \delta, \theta_i ] = \sum_j [ \lambda_j \theta_j^*, \theta_i ] = \sum_j \lambda_j \delta_{ij} = \lambda_i\,.
\]
And we can adjust the calculations above as follows. Firstly, using Lemma \ref{lemma:woop4} we obtain
\[
\delta^p [\delta, \theta_i] = \sum_{i+j=p} \binom{p}{i} [\delta, \theta_i]^{(i+1)} \delta^j = \sum_{i+j=p} \binom{p}{i} [\delta, \lambda_i]^{(i)} \delta^j\,.
\]
Tracking this through the rest of the calculation, as above, yields \eqref{eq:psik}.
\end{proof}

\begin{remark} The low order terms of \eqref{eq:psik} are
\be
\cat{T}(\theta_i) = \theta_i - \lambda_i - \frac{1}{2} \sum_q [\lambda_q, \lambda_i] \theta^*_q + \cdots
\ee
\end{remark}

It is immediate that

\begin{theorem} The closed odd operators $\cat{T}(\theta_i)$ and $\cat{T}(\theta_i^*) = \theta_i^*$ on the linear factorisation $(K \otimes X, d_K + d)$ form a representation of the Clifford algebra $C_m$.
\end{theorem}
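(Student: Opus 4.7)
The proof is essentially a one-line observation dressed up. By Definition \ref{defn:transfer_contract} the transfer $\cat{T}(\gamma) = \exp(-\delta)\gamma\exp(\delta)$ is conjugation by the isomorphism $\exp(\delta)$ of Proposition \ref{prop:equivalencekoszul}. Conjugation by an invertible element of the algebra of $R$-linear operators on the common underlying graded module $K \otimes X \cong S_m \otimes_k X$ is an automorphism of this algebra, so in particular
\[
\cat{T}(\alpha)\cat{T}(\beta) = \exp(-\delta)\alpha\exp(\delta)\exp(-\delta)\beta\exp(\delta) = \cat{T}(\alpha\beta)
\]
for any homogeneous $R$-linear $\alpha, \beta$, and hence $\cat{T}$ preserves graded commutators.

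The plan is therefore: first recall from Lemma \ref{defn:spinorrep} that on $S_m \otimes_k X$ the operators $\theta_i = \theta_i \wedge (-)$ and $\theta_i^* = \theta_i^* \lrcorner (-)$ give (after tensoring with $1_X$) the spinor representation of $C_m$, so they satisfy the Clifford relations
\[
[\theta_i, \theta_j] = 0, \qquad [\theta_i^*, \theta_j^*] = 0, \qquad [\theta_i^*, \theta_j] = \delta_{ij}.
\]
Second, apply $\cat{T}$ to each of these identities. By the previous paragraph, $\cat{T}$ is a homomorphism of graded associative algebras, so
\[
[\cat{T}(\theta_i), \cat{T}(\theta_j)] = 0, \qquad [\cat{T}(\theta_i^*), \cat{T}(\theta_j^*)] = 0, \qquad [\cat{T}(\theta_i^*), \cat{T}(\theta_j)] = \delta_{ij},
\]
which (after identifying $\fermc_i \leftrightarrow \cat{T}(\theta_i)$ and $\ferm_i \leftrightarrow \cat{T}(\theta_i^*) = \theta_i^*$, matching the conventions of Lemma \ref{defn:spinorrep}) are precisely the defining relations of $C_m$ in \eqref{eq:clifford_relations}.

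Finally, closedness of $\cat{T}(\theta_i)$ and $\cat{T}(\theta_i^*)$ with respect to the perturbed differential $d_K + d$ is recorded in the remark following Definition \ref{defn:transfer_contract}: the formula $[\cat{T}(\gamma), d_K + d] = \cat{T}([\gamma, d])$ gives zero because $\theta_i$ and $\theta_i^*$ are $k$-linear (in particular $R$-linear after tensoring with $1_X$) and commute with $d$. There is no real obstacle; the only thing to watch is the bookkeeping of which generator of $C_m$ corresponds to $\theta_i$ and which to $\theta_i^*$, so that the $\delta_{ij}$ in the mixed commutator comes out on the correct side.
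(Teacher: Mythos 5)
Your proof is correct and is essentially what the paper has in mind when it says ``It is immediate that'': $\cat{T}$ is conjugation by the isomorphism $\exp(\delta)$ of Proposition~\ref{prop:equivalencekoszul}, hence an algebra homomorphism on $R$-linear operators, so it carries the Clifford relations for $\theta_i, \theta_i^*$ on $(S_m \otimes_k X, d)$ to the relations for $\cat{T}(\theta_i), \cat{T}(\theta_i^*)$ on $(K\otimes X, d_K + d)$, and closedness follows from $[\cat{T}(\gamma), d_K+d] = \cat{T}([\gamma,d])$ together with $[\theta_i,d]=[\theta_i^*,d]=0$.
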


By construction, Proposition \ref{prop:equivalencekoszul} is now promoted to an isomorphism of representations of $C_m$, with $\cat{T}(\theta_i), \cat{T}(\theta_i^*)$ acting on $K \otimes X$ and the $\theta_i, \theta_i^*$ acting on $S_m \otimes_k X$. 


\begin{remark}\label{remark:cancellations_transfer} Suppose for each pair $1 \le i,j \le m$ that up to homotopy
\be
[\lambda_i, \lambda_j] \simeq f_{ij} \cdot 1_X
\ee
for elements $f_{ij} \in R$. Then the formula for $\cat{T}(\theta_i)$ simplifies substantially: for any indices $i,j,k$ we have $[\lambda_i, [\lambda_j, \lambda_k]] \simeq [\lambda_i, f_{jk}] = 0$ so that \eqref{eq:psik} becomes
\be
\cat{T}(\theta_i) \simeq \theta_i - \lambda_i - \frac{1}{2} \sum_q f_{qi} \theta^*_q\,.
\ee
\end{remark}

\subsection{The finite model}\label{section:theequivalence}

We proceed to rediscover $Y \l X$ with its Clifford action as a finite model of $Y \otimes X$. Using this we prove in the next section that $\L$ and $\LG$ are equivalent bicategories. The notation is as in Setup \ref{setupforfusion}. We introduce formal variables $\theta_i$ of odd degree and set
\[
S_m = \bigwedge\left( k \theta_1 \oplus \cdots \oplus k \theta_m \right)\,.
\]
The Koszul complex $K$ of the sequence $t_1,\ldots,t_m$ (where $t_i = \partial_{y_i} V$) is defined by
\begin{equation}\label{defn:koszul2}
K = S_m \otimes_k k[y], \qquad d_K = \sum_{i=1}^m t_i \theta_i^*\,.
\end{equation}
According to Strategy \ref{strategy} the first step is to find a finite model of
\[
( K \otimes_{k[y]} ( Y \otimes X ), d_K ) = ( S_m \otimes_k ( Y \otimes X ), d_K )\,.
\]
\textbf{First step.} A finite model for $(K, d_K)$ is obtained using connections. The connection $\nabla$ of \eqref{eq:connection_nabla} extends canonically to an operator on $\widehat{k[y]} \otimes_{k[t]} \Omega^*_{k[t]/k}$ where $\Omega^* = \bigwedge \Omega^1$ is the exterior algebra. By identifying $d t_i$ with $\theta_i$ we may make the identification
\[
\widehat{K} := K \otimes_{k[y]} \widehat{k[y]} \cong \Omega^*_{k[t]/k} \otimes_{k[t]} \widehat{k[y]}
\]
so that $\nabla$ becomes identified with a degree $-1$ $k$-linear operator on $\widehat{K}$.  It is straightforward to check (see \cite[Lemma 8.7]{dm1102.2957}) that $[d_K, \nabla]$ is invertible on the graded submodule $\widehat{K}_{\le -1}$ of negative degree terms (in the usual $\nZ$-grading on the Koszul complex) and we define
\[
H = [d_K, \nabla]^{-1} \nabla\,.
\]

\begin{proposition} $H$ is a $k$-linear splitting homotopy on the complex $(\widehat{K},d_K)$.
\end{proposition}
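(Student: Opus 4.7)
The plan is to verify directly the two defining conditions of a splitting homotopy for $H$, namely $H^2 = 0$ and $H d_K H = H$. Writing $A = [d_K, \nabla]$ so that $H = A^{-1}\nabla$ on the subspace where $A$ is invertible, the central algebraic observation is that $A$ graded-commutes with both $d_K$ and $\nabla$, hence so does $A^{-1}$ wherever it is defined. The commutation with $d_K$ is the graded Jacobi identity combined with $d_K^2 = 0$, and the commutation with $\nabla$ uses the flatness $\nabla^2 = 0$ of the connection (which is part of what is established in \cite[Appendix B]{dm1102.2957}).

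Granted this, both axioms reduce to short computations. For condition (i),
\[
H^2 \;=\; A^{-1}\nabla A^{-1}\nabla \;=\; A^{-2}\nabla^2 \;=\; 0.
\]
For condition (ii), the definition of $A$ gives $\nabla d_K = A - d_K\nabla$, whence $\nabla d_K \nabla = A\nabla - d_K \nabla^2 = A\nabla$, and therefore
\[
H d_K H \;=\; A^{-1}\nabla d_K A^{-1}\nabla \;=\; A^{-2}\nabla d_K\nabla \;=\; A^{-2} A\nabla \;=\; A^{-1}\nabla \;=\; H.
\]
The $k$-linearity of $H$ is inherited from $\nabla$ and $d_K$, and the odd degree follows from $\nabla$ being odd.

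The only step that requires genuine care is well-definedness: the operator $A^{-1}$ exists only on $\check{K}_{\le -1}$, so I would verify grading-wise that $\nabla$ lands in this subspace and that each of the intermediate compositions in the displayed identities above takes values where $A^{-1}$ is defined. Once this bookkeeping is in place---essentially a degree check using that $d_K$ and $\nabla$ shift the $\nZ$-grading on $\check K$ in opposite directions---the identities above are purely formal consequences of $d_K^2 = \nabla^2 = 0$ and the definition of $A$. I expect this domain/codomain tracking, rather than any genuine algebraic content, to be the main obstacle in writing out a fully rigorous proof.
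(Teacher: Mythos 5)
Your verification is correct, and it supplies in full detail what the paper delegates to \cite[Section~8.1]{dm1102.2957}. The two splitting-homotopy axioms are indeed formal consequences of $d_K^2 = 0$, $\nabla^2 = 0$, and the definition $A = [d_K,\nabla]$, exactly as you lay out. One refinement worth making explicit when you write out the domain bookkeeping you flag at the end: in the step $A^{-1}\nabla d_K A^{-1}\nabla = A^{-2}\nabla d_K \nabla$ you cannot commute $d_K$ and $\nabla$ \emph{separately} past $A^{-1}$, because $d_K$ alone raises the Koszul degree and so sends $\check{K}_{-1}$ into $\check{K}_0$ where $A^{-1}$ is not defined. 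What makes the manipulation legitimate is that the \emph{composite} $\nabla d_K$ is a degree-zero operator preserving $\check{K}_{\le -1}$, it commutes with $A$ there (by your central observation), and hence commutes with $A^{-1}$ there. Similarly for $H^2$ one should commute either $\nabla$ alone (which does preserve $\check{K}_{\le -1}$) or argue directly that $\nabla A^{-1}\nabla = A^{-1}\nabla^2 = 0$ on the image of the first $\nabla$. With that caveat stated, the argument is complete and, as you anticipate, the only genuine content beyond the algebra is this degree-tracking, together with the flatness $\nabla^2 = 0$ and the invertibility of $A$ on $\check{K}_{\le -1}$, both of which the paper imports from \cite{dm1102.2957}.
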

\begin{proof}
See \cite[Section 8.1]{dm1102.2957}.
\end{proof}

The associated strong deformation retract of $\mathbb{Z}$-graded $k$-complexes is
\begin{equation}\label{eq:originalHdef}
\xymatrix@C+2pc{
(J_V,0) \ar@<-1ex>[r]_{\sigma} & (\widehat{K}, d_K), \ar@<-1ex>[l]_{\pi}
} \quad H
\end{equation}
where $J_V$ is the Jacobi algebra from \eqref{eq:defnjacobian}, $\pi: \widehat{K} \lto J_V$ is the canonical quasi-isomorphism which vanishes on $K_{\le -1}$ and is in degree zero the quotient map $\widehat{k[y]} \lto J_V$, and $\sigma$ is a $k$-linear embedding of $J_V$ into $\widehat{k[y]}$ uniquely determined by $\nabla$. The splitting homotopy $H$, or equivalently the deformation retract \eqref{eq:originalHdef}, is the desired finite model of $\widehat{K}$. Note that \eqref{eq:originalHdef} is \cite[(10.1)]{dm1102.2957}, modulo the sign difference referred to in Remark \ref{remark:strongdefosigns}.

Recall that the tensor product $Y \,\widehat{\otimes}\, X$ of \eqref{eq:completed_tensor_product} is a finite rank matrix factorisation of $U - W$ over $R = k[x,z] \otimes_k \widehat{k[y]}$. The connection $\nabla$ extends to a standard $k[x,z]$-linear flat connection on $R$. A choice of homogeneous basis for $X$ and $Y$ allows us to write $Y \otimes X \cong k[x,y,z] \otimes_k Q$ for some $\mathbb{Z}_2$-graded free $k$-module $Q$, so
\begin{equation}\label{eq:lambdaiso2}
\widehat{K} \otimes_{\widehat{k[y]}}( Y \,\widehat{\otimes}\, X ) \cong \widehat{K} \otimes_{\widehat{k[y]}} (R \otimes_k Q) \cong \widehat{K} \otimes_k k[x,z] \otimes_k Q
\end{equation}
In this way the $k$-linear splitting homotopy $H$ on $\widehat{K}$ induces a $k[x,z]$-linear operator $H \otimes 1$ on $\widehat{K} \otimes_{\widehat{k[y]}}( Y \,\widehat{\otimes}\, X)$, which we again denote by $H$. It is immediate that

\begin{lemma} $H$ is a $k[x,z]$-linear splitting homotopy on $(\widehat{K} \otimes_{\widehat{k[y]}} ( Y \,\widehat{\otimes}\, X ), d_K)$.
\end{lemma}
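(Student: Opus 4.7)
The plan is to exploit the isomorphism \eqref{eq:lambdaiso2}, which presents the tensor product $\check{K} \otimes_{k\llbracket y \rrbracket}( Y \,\check{\otimes}\, X)$ as $\check{K} \otimes_k k[x,z] \otimes_k Q$, and then observe that both the differential $d_K$ and the operator $H \otimes 1$ act only on the first tensor factor. Once this is in hand, the three conditions defining a splitting homotopy transfer automatically from the corresponding conditions on $(\check{K},d_K,H)$.

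More concretely, I would first note that under the identification in \eqref{eq:lambdaiso2} the differential $d_K$ on the composite object corresponds to $d_K \otimes 1 \otimes 1$, and the operator we are denoting $H$ corresponds to $H \otimes 1 \otimes 1$. Since $k[x,z] \otimes_k Q$ has no $y$-dependence and acts trivially under both $d_K$ and $H$, the commutation with the $k[x,z]$-action is trivial, establishing $k[x,z]$-linearity of $H$. The splitting homotopy conditions then read
\begin{align*}
(H \otimes 1 \otimes 1)^2 &= H^2 \otimes 1 \otimes 1 = 0\,,\\
(H \otimes 1 \otimes 1)(d_K \otimes 1 \otimes 1)(H \otimes 1 \otimes 1) &= (H d_K H) \otimes 1 \otimes 1 = H \otimes 1 \otimes 1\,,
\end{align*}
both of which follow immediately from the fact that $H$ is a splitting homotopy on $(\check{K},d_K)$ by the proposition cited just above.

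The only potential subtlety (and hence the main obstacle, mild as it is) is checking that the splitting homotopy structure really does transport cleanly across the isomorphism \eqref{eq:lambdaiso2}, i.e.\ that after fixing a homogeneous basis for $Y \otimes X$ and writing $Y \,\check{\otimes}\, X \cong R \otimes_k Q$, the induced differential on the full tensor product is literally $d_K$ acting on the $\check{K}$-factor alone (with the internal differential $d_{Y \otimes X}$ being kept separate, to be mixed in later by the perturbation lemma in step 2 of Strategy \ref{strategy}). Once one is comfortable with that bookkeeping, the lemma is immediate.
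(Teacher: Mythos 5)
Your argument is exactly the reasoning the paper has in mind: the text immediately preceding the lemma constructs $H$ as $H \otimes 1$ via the isomorphism \eqref{eq:lambdaiso2} and then declares the lemma "immediate" without a proof environment, precisely because both $d_K$ and $H$ act only on the $\check{K}$-factor, so the splitting homotopy identities and $k[x,z]$-linearity transport trivially from the result cited for $(\check{K},d_K)$. Your spelled-out version, including the observation that $d_{Y \otimes X}$ is deliberately kept aside to be mixed in later by perturbation, is correct and matches the paper's intent.
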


The associated strong deformation retract is
\[
\xymatrix@C+2pc{
(Y \l X,0) \ar@<-1ex>[r]_-{\sigma} & (\widehat{K} \otimes_{\widehat{k[y]}} Y \,\widehat{\otimes}\, X, d_K), \ar@<-1ex>[l]_-{\pi}
} \quad H
\]
The map $\pi$ is $R$-linear induced by the map $\pi: \widehat{K} \lto J_V$, and $\sigma = 1 \otimes \sigma$ is defined using map $\sigma$ of \eqref{eq:originalHdef} and the isomorphism \eqref{eq:lambdaiso2}. This is the desired finite model of $K \otimes Y \otimes X$ equipped with only the differential $d_K$. 

\vspace{0.3cm}

\textbf{Second step.} We now view $d_{Y \otimes X}$ as a perturbation. The transference problem asks how to define a splitting homotopy $\phi_\infty$ on the matrix factorisation $(\widehat{K} \otimes_{\widehat{k[y]}} Y \,\widehat{\otimes}\, X , d_K + d_{Y \otimes X})$ in such a way that the underlying graded module is $Y \l X$. Since $H d_{Y \otimes X}$ has finite order, it follows from the perturbation lemma (Theorem \ref{theorem:pertlemma}) with $\tau = d_{Y \otimes X}, \phi = H$ that
\[
\phi_\infty = \sum_{m \ge 0} (-1)^m (H d_{Y \otimes X})^m H
\]
is $k[x,z]$-linear splitting homotopy achieving the desired aim. That is, by \cite[Proposition 7.1]{dm1102.2957} $\phi_\infty$ is a $k[x,z]$-linear splitting homotopy on $\widehat{K} \otimes_{\widehat{k[y]}} Y \,\widehat{\otimes}\, X$ and with
\[
\sigma_\infty = \sum_{m \ge 0} (-1)^m (H d_{Y \otimes X})^m \sigma
\]
there is a $k[x,z]$-linear strong deformation retract of linear factorisations of $U - W$
\begin{equation}\label{eq:finite_model_defo}
\xymatrix@C+4pc{
\big( Y \l X, d_{Y \otimes X} \big) \ar@<-1ex>[r]_-{\sigma_\infty} & \big( \widehat{K} \otimes_{\widehat{k[y]}} Y \,\widehat{\otimes}\, X, d_K + d_{Y \otimes X} \big)\,, \ar@<-1ex>[l]_-{\pi}
} \quad \phi_\infty\,.
\end{equation}

\textbf{Third step.} Since each $t_i$ acts null-homotopically on $Y \otimes X$ there is by Section \ref{section:cliffordactkos} an isomorphism of linear factorisations
\begin{equation}\label{eq:iso_larger_object}
\xymatrix@C+3pc{ \big( \widehat{K} \otimes_{\widehat{k[y]}} Y \otimes X, d_K + d_{Y \otimes X} \big) \ar@<-1ex>[r]_-{ \exp(\delta) } & \big( S_m \otimes_k Y \,\widehat{\otimes}\, X, d_{Y \otimes X} \big)\ar@<-1ex>[l]_-{ \exp(-\delta) } }
\end{equation}
where $\delta = \sum_i \lambda_i \theta_i^*$ for a choice $\lambda_i$ of null-homotopy for the action of $t_i$. From the identity $d^2_X = V - W$ we deduce that $\partial_{y_i}(d_X) d_X + d_X \partial_{y_i}(d_X) = \partial_{y_i} V = t_i$ so that we may, and do, choose $\lambda_i = \partial_{y_i}(d_X)$, see Section \ref{section:partial}.

\vspace{0.3cm}

\textbf{Fourth step.} Combining \eqref{eq:finite_model_defo} and \eqref{eq:iso_larger_object} we have a homotopy equivalence of matrix factorisations over $k[x,z]$
\begin{equation}\label{eq:final_finite_model2}
\xymatrix@C+4pc{
Y \l X \ar@<-1ex>[r]_-{\sigma_\infty} & \widehat{K} \otimes_{\widehat{k[y]}} Y \,\widehat{\otimes}\, X \ar@<-1ex>[l]_-{\pi} \ar@<-1ex>[r]_-{ \exp(\delta) } & S_m \otimes_k ( Y \,\widehat{\otimes}\, X ) \ar@<-1ex>[l]_-{ \exp(-\delta) }
}\,.
\end{equation}
By \cite[Remark 7.7]{dm1102.2957} the canonical map $\epsilon: Y \otimes X \lto Y \,\widehat{\otimes}\, X$ is a homotopy equivalence over $k[x,z]$ (here we use that $k[y] \lto \widehat{k[y]}$ is flat, see Remark \ref{remark:flatness}) so finally we have

\begin{theorem}\label{theorem:htpy_equivalence_main} There is a homotopy equivalence of matrix factorisations over $k[x,z]$
\begin{equation}\label{eq:final_finite_model}
\xymatrix@C+4pc{
Y \l X \ar@<-1ex>[r]_-{\Phi^{-1}} & S_m \otimes_k ( Y \otimes X )\ar@<-1ex>[l]_-{\Phi}
}
\end{equation}
where $\Phi = \pi \exp(-\delta) \epsilon$ and $\Phi^{-1} = \epsilon^{-1} \exp(\delta) \sigma_\infty$.
\end{theorem}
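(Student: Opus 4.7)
The plan is to assemble the homotopy equivalence by composing the three pieces already built up in the four steps of Section \ref{section:theequivalence}, together with the auxiliary homotopy equivalence $\epsilon$ between the ordinary and completed tensor products. Since all the hard work has been front-loaded into those steps, the proof is really a bookkeeping exercise: I need to check that the composition is a homotopy equivalence of matrix factorisations of $U - W$ over $k[x,z]$, and that the maps work out to the stated formulas for $\Phi$ and $\Phi^{-1}$.

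Concretely, first I would recall the strong deformation retract \eqref{eq:finite_model_defo} produced by the perturbation lemma (Theorem \ref{theorem:pertlemma}) applied to the $k[x,z]$-linear splitting homotopy $H$ with perturbation $d_{Y \otimes X}$; this yields a homotopy equivalence
\[
(Y \l X, d_{Y \otimes X}) \;\simeq\; \big( \check{K} \otimes_{k\llbracket y \rrbracket} Y \,\check{\otimes}\, X, \, d_K + d_{Y \otimes X} \big)
\]
realised by the mutually inverse (up to homotopy) maps $\sigma_\infty$ and $\pi$. Next, because each $t_i = \partial_{y_i} V$ acts null-homotopically on $Y \otimes X$ via $\lambda_i = \partial_{y_i}(d_X)$ (Section \ref{section:partial}), Proposition \ref{prop:equivalencekoszul} supplies the mutually inverse $k[x,z]$-linear isomorphisms $\exp(\pm\delta)$ identifying this with $(S_m \otimes_k Y \,\check{\otimes}\, X, d_{Y \otimes X})$. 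Finally, \cite[Remark 7.7]{dm1102.2957} furnishes a homotopy equivalence $\epsilon: Y \otimes X \lto Y \,\check{\otimes}\, X$ of matrix factorisations over $k[x,z]$, which upon tensoring with the finite-dimensional $k$-module $S_m$ delivers a homotopy equivalence $(S_m \otimes_k Y \otimes X, d_{Y\otimes X}) \simeq (S_m \otimes_k Y \,\check{\otimes}\, X, d_{Y \otimes X})$.

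Composing the three equivalences in the indicated direction and writing $\epsilon^{-1}$ for a chosen homotopy inverse of $1 \otimes \epsilon$, the maps
\[
\Phi \;=\; \pi \, \exp(-\delta) \, \epsilon\,, \qquad \Phi^{-1} \;=\; \epsilon^{-1} \exp(\delta)\, \sigma_\infty
\]
are mutually inverse up to homotopy in the category of matrix factorisations of $U - W$ over $k[x,z]$. There is no genuine obstacle here beyond ensuring that each of the three intermediate homotopy equivalences is $k[x,z]$-linear and respects the $\mathbb{Z}_2$-grading: for $\sigma_\infty, \pi$ this is the content of \cite[Proposition 7.1]{dm1102.2957}; for $\exp(\pm\delta)$ it is clear from the formulas since both $\lambda_i$ and $\theta_i^*$ are odd and $k[x,z]$-linear; and for $\epsilon$ it is stated in \cite[Remark 7.7]{dm1102.2957}. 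The statement of the theorem then follows by reading off the composite maps, which completes the proof.
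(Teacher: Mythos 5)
Your proposal is correct and follows essentially the same argument as the paper: Theorem \ref{theorem:htpy_equivalence_main} is obtained by composing the strong deformation retract \eqref{eq:finite_model_defo} from the perturbation lemma, the isomorphism $\exp(\pm\delta)$ of Proposition \ref{prop:equivalencekoszul}, and the homotopy equivalence $\epsilon$ from \cite[Remark 7.7]{dm1102.2957}, exactly as laid out in Strategy \ref{strategy} and carried out in the four steps preceding the theorem.
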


The canonical action of $C_m$ on the spinor representation $S_m$ by wedge product and contraction $\{ \theta_i, \theta_i^* \}_{i=1}^m$ (see Lemma \ref{defn:spinorrep}) induces an action  of $C_m$ on $S_m \otimes_k ( Y \otimes X )$. In Definition \ref{defn:cliffordaction_cut} we gave an action of $C_m$ on the cut $Y \l X$ by operators $\{ \ferm_i, \fermc_i \}_{i=1}^m$, and it remains to show that these two actions are compatible under \eqref{eq:final_finite_model}.

\begin{proposition}\label{prop:clifford_action} The map $\Phi$ is an isomorphism of $C_m$- representations in the homotopy category. That is, there are homotopies for $1 \le i \le m$
\[
\fermc_i \Phi \simeq \Phi \theta_i \,, \qquad \ferm_i \Phi \simeq \Phi \theta_i^*\,.
\]
\end{proposition}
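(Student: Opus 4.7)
The strategy is to push the Clifford operators through the three factors of $\Phi = \pi \circ \exp(-\delta) \circ \epsilon$. The map $\epsilon$ acts only on the $Y \otimes X$ factor, so it commutes tautologically with $\theta_i, \theta_i^*$. For $\exp(-\delta)$, Definition \ref{defn:transfer_contract} gives the transfer formulas
\[
\exp(-\delta)\,\theta_i^* = \theta_i^*\,\exp(-\delta)\,, \qquad \exp(-\delta)\,\theta_i = \cat{T}(\theta_i)\,\exp(-\delta)\,.
\]
Choosing $\lambda_i = \partial_{y_i}(d_X)$ as the null-homotopies for the $t_i$-action, Lemma \ref{lemma:commutator_of_lambdas} gives $[\lambda_i, \lambda_j] \simeq \partial_{y_iy_j}(V) \cdot 1_X$, so by Remark \ref{remark:cancellations_transfer}
\[
\cat{T}(\theta_i) \simeq \theta_i - \partial_{y_i}(d_X) - \tfrac{1}{2}\sum_q \partial_{y_qy_i}(V)\,\theta_q^*\,,
\]
which already resembles $\fermc_i$ from Definition \ref{defn:cliffordaction_cut}, once one replaces $\theta_q^*$ by $\At_q$ and $\theta_i$ by $0$ after applying $\pi$.

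The proof therefore reduces to two identities of maps from $\check{K} \otimes_{k\llbracket y \rrbracket} Y \,\check{\otimes}\, X$ (with differential $d_K + d_{Y \otimes X}$) to $Y \l X$:
\[
\text{(i)}\;\; \pi\,\theta_q^* \simeq \At_q\,\pi\,, \qquad \text{(ii)}\;\; \pi\,\theta_i = 0\,.
\]
Identity (ii) holds on the nose, since $\theta_i$ raises the Koszul degree while $\pi$ annihilates all positive Koszul degrees. Granted (i) and (ii), the obvious equality $\pi\lambda_i = \lambda_i\pi$ assembles into $\pi\,\cat{T}(\theta_i) \simeq \fermc_i\,\pi$, giving the first equation of the proposition; and (i) alone, applied via $\cat{T}(\theta_i^*) = \theta_i^*$, gives the second.

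Identity (i) is the essential content and, I expect, the main obstacle. The natural candidate homotopy is
\[
h_q \;=\; -\,\pi \circ (1_{\check{K}} \otimes \partial_{t_q})\,,
\]
where $\partial_{t_q}$ is the component of $\nabla$ extended to $Y \,\check{\otimes}\, X$ via a chosen homogeneous basis. I would verify the identity $[d_K + d_{Y\otimes X},\, h_q] = \pi\,\theta_q^* - \At_q\,\pi$ degree-by-degree in the Koszul grading: on degree $0$ it reduces directly to the defining formula $\At_q = [d_{Y\otimes X}, \partial_{t_q}]$; on a degree-$1$ element $\theta_p \otimes \eta$ the only non-vanishing contribution comes from $d_K(\theta_p \otimes \eta) = t_p \eta$ followed by $\partial_{t_q}$, which produces $\delta_{pq}\,\pi(\eta)$ modulo the Jacobi ideal, matching $\pi\,\theta_q^*(\theta_p \otimes \eta)$; and on degrees $\ge 2$ both sides vanish. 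The difficulty is principally bookkeeping: tracking signs across the Koszul and $\mathbb{Z}_2$ gradings, and controlling the dependence of $\partial_{t_q}$ on choices of basis, both of which are handled by the techniques of Section \ref{section:cliffordactkos} and \cite[\S 9]{dm1102.2957}. No substantially new ingredient beyond those references is needed.
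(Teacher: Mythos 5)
Your proposal is correct and gives essentially the same high-level reduction as the paper (push the Clifford generators through $\exp(-\delta)$ using the transfer, then handle the $\pi$ step), but the key step is proved by a genuinely different and somewhat more elementary route. The paper establishes Lemma~\ref{lemma:transfercreations}, namely
\[
\pi\,\theta_{q_1}^* \cdots \theta_{q_l}^*\,\sigma_\infty \simeq \At_{q_1}\cdots\At_{q_l}\,,
\]
by invoking the explicit perturbation-series formula for $\sigma_\infty$ from \cite[(10.3)]{dm1102.2957} and then a combinatorial simplification; it then conjugates, computing $\Phi\theta_i\Phi^{-1} \simeq \pi\,\cat{T}(\theta_i)\,\sigma_\infty$ and substituting. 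You instead prove the one-sided homotopy-commutation relation $\pi\,\theta_q^* \simeq \At_q\,\pi$ directly, via the explicit homotopy $h_q = -\pi\circ(1\otimes\partial_{t_q})$ verified degree-by-degree in the Koszul grading. The two are logically interchangeable for this proposition: your identity composed with $\sigma_\infty$ and $\pi\sigma_\infty = 1$ recovers the $l=1$ case of the paper's lemma (the only case actually used), and conversely the paper's $l=1$ statement composed with $\pi$ and $\sigma_\infty\pi \simeq 1$ recovers yours. What your route buys is self-containment --- you avoid the explicit formula \eqref{eq:atiyah_formula_sigma} for $\sigma_\infty$, and the verification of $[d_K + d_{Y\otimes X}, h_q] = \pi\theta_q^* - \At_q\pi$ is short (only Koszul degrees $0$ and $1$ contribute, the degree-$0$ part reducing to the definition $\At_q = [d_{Y\otimes X},\partial_{t_q}]$ and the degree-$1$ part to $[\partial_{t_q}, t_p] = \delta_{qp}$ modulo the Jacobi ideal). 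The paper's route buys the stronger multi-index statement, which may have independent use. Two small cleanups are worth making in a write-up: you should record why $\pi$ in \eqref{eq:finite_model_defo} is still a chain map for the perturbed differential (this holds because $\pi_\infty = \pi - \pi A H$ and the image of $H$ lies in Koszul degrees where $\pi$ vanishes, so $\pi_\infty = \pi$ on the nose), and you should note explicitly, as you gesture at, that $\epsilon$ commutes strictly with $\theta_i,\theta_i^*$ so only the $\exp(-\delta)$ and $\pi$ factors of $\Phi$ carry any content.
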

\begin{proof}
We prove that $\fermc_i \simeq \Phi \theta_i \Phi^{-1}$ and $\ferm_i \simeq \Phi \theta_i^* \Phi^{-1}$. With the diagram \eqref{eq:final_finite_model2} in mind, and following the terminology of Section \ref{section:cliffordactkos}, we define the transfers of $\theta_i, \theta_i^*$ from closed operators on $S_m \otimes_k ( Y \,\widehat{\otimes}\, X )$ to operators on the intermediate object $\widehat{K} \otimes_{\widehat{k[y]}} Y \,\widehat{\otimes}\, X$ by
\[
\cat{T}(\theta_i) = \exp(-\delta) \theta_i \exp(\delta)\,, \qquad \cat{T}(\theta_i^*) = \exp(-\delta) \theta_i^* \exp(\delta)\,.
\]
Then we use Lemma \ref{lemma:commutator_of_lambdas}, according to which there are homotopies
\[
[ \lambda_j, \lambda_k ] = [ \partial_{y_j}(d_X), \partial_{y_k}(d_X) ] \simeq \partial_{y_jy_k}(V)\,.
\]
Therefore by Theorem \ref{theorem:psik} and Remark \ref{remark:cancellations_transfer} we have $\cat{T}(\theta_i^*) = \theta_i^*$ and a homotopy
\be\label{eq:comp338h}
\cat{T}(\theta_i) \simeq \theta_i - \lambda_i - \frac{1}{2}\sum_q \partial_{y_q} \partial_{y_i}(V) \theta_q^*\,.
\ee
It remains to compute the further transfer of these operators to $Y \l X$. The key point is proven in Lemma \ref{lemma:transfercreations} below, where we show that there is a homotopy
\be\label{eq:transfercr}
\pi \theta_{q_1}^* \cdots \theta_{q_l}^* \sigma_\infty \simeq \At_{q_1} \cdots \At_{q_l}
\ee
for any indices $q_1,\ldots,q_l$. Hence
\[
\Phi \theta_i^* \Phi^{-1} \simeq \pi \cat{T}(\theta_i^*) \sigma_\infty = \pi \theta_i^* \sigma_\infty \simeq \At_i = \ferm_i\,.
\]
Since $\pi$ projects onto $\theta$-degree zero,
\begin{align*}
\Phi \theta_i \Phi^{-1} &\simeq \pi \cat{T}(\theta_i) \sigma_\infty\\
&\simeq - \pi \lambda_i \sigma_\infty - \frac{1}{2} \sum_q \partial_{y_q} \partial_{y_i}(V) \pi \theta_q^* \sigma_\infty\\
&\simeq - \lambda_i - \frac{1}{2}\sum_q \partial_{y_q} \partial_{y_i}(V) \At_q\\
&= \fermc_i
\end{align*}
which completes the proof.
\end{proof}

\begin{lemma}\label{lemma:transfercreations} In the context of the diagram \eqref{eq:final_finite_model2} there is a $k[x,z]$-linear homotopy
\[
\pi \theta_{q_1}^* \cdots \theta_{q_l}^* \sigma_\infty \simeq \At_{q_1} \cdots \At_{q_l}
\]
for any sequence of indices $1 \le q_1,\ldots,q_l \le m$.
\end{lemma}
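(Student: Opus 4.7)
The plan is induction on $l$. The base case $l=0$ reduces to $\pi \sigma_\infty = 1_{Y \l X}$. By the perturbation formula $\pi_\infty = \pi - \pi A \phi_\infty$, and since every summand of $A \phi_\infty$ strictly raises $\theta$-degree (because $\sigma$ lands in $\theta$-degree zero, $d_{Y\otimes X}$ preserves $\theta$-degree, and the splitting homotopy $H$ raises it by $1$), the term $\pi A \phi_\infty$ vanishes, giving $\pi_\infty = \pi$ in our setting. Combined with $\pi_\infty \sigma_\infty = 1_{Y\l X}$, this delivers the base case.

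The inductive step rests on the following identity of $k[x,z]$-linear operators on $\check{K} \otimes_{k\llbracket y\rrbracket} Y\,\check{\otimes}\, X$:
\[
\theta_i^* = (1 \otimes \At_i^{Y,X}) - [D,\partial_{t_i}]\,, \qquad D = d_K + d_{Y\otimes X}\,,
\]
where $\partial_{t_i}$ acts only on the $Y\,\check{\otimes}\, X$ factor. The piece $[d_{Y\otimes X},\partial_{t_i}] = \At_i^{Y,X}$ is by definition, while $[d_K,\partial_{t_i}] = -\theta_i^*$ follows at once from $[t_j, \partial_{t_i}] = -\delta_{ij}$ together with $d_K = \sum_j t_j \theta_j^*$. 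Two companion facts are needed: each $\theta_j^*$ is $D$-closed (both $[d_K,\theta_j^*]$ and $[d_{Y\otimes X},\theta_j^*]$ vanish by elementary computation on different tensor factors), and $\pi \cdot (1\otimes \At_i^{Y,X}) = \At_i \cdot \pi$, since the extended Atiyah class preserves $\theta$-degree and descends to the Jacobi quotient to give the Atiyah class on $Y \l X$.

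For the inductive step, set $B = \theta_{q_2}^* \cdots \theta_{q_l}^*$ and substitute the key identity to obtain
\[
\pi \theta_{q_1}^* B \sigma_\infty = \pi (1 \otimes \At_{q_1}^{Y,X}) B \sigma_\infty \;-\; \pi [D, \partial_{t_{q_1}}] B \sigma_\infty\,.
\]
The first summand equals $\At_{q_1} \cdot \pi B \sigma_\infty$ by the intertwining property. For the second summand, the $D$-closedness of $B$ allows me to commute $D$ through $B$ at the cost of a Koszul sign $(-1)^{l-1}$, after which $\pi D = d_{Y\otimes X}\pi$ and $D \sigma_\infty = \sigma_\infty d_{Y\otimes X}$ (since $\pi$ and $\sigma_\infty$ are morphisms of linear factorisations) rewrite it as the graded commutator $[d_{Y\otimes X}, \pi \partial_{t_{q_1}} B \sigma_\infty]$, which is null-homotopic with the displayed operator serving as an explicit homotopy. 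Invoking the inductive hypothesis $\pi B \sigma_\infty \simeq \At_{q_2} \cdots \At_{q_l}$ and using that $\At_{q_1}$ is closed (so that pre-composing with it sends a $[d_{Y\otimes X},-]$-boundary to a $[d_{Y\otimes X},-]$-boundary) yields the required
\[
\pi \theta_{q_1}^* \cdots \theta_{q_l}^* \sigma_\infty \simeq \At_{q_1} \cdots \At_{q_l}\,.
\]

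The only real obstacle is careful Koszul sign bookkeeping when commuting the odd operators $\theta_j^*$ and $D$ past each other and past the even $\partial_{t_i}$, but since all operators in sight are homogeneous this is routine. No input is needed beyond identities already established in the paper and the morphism-of-factorisation property of $\pi$ and $\sigma_\infty$.
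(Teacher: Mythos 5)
Your proof is correct, and it takes a genuinely different route from the paper's. The paper quotes the explicit perturbation formula
\[
\sigma_\infty = \sum_{s \ge 0} \sum_{p_1,\ldots,p_s} (-1)^{\binom{s+1}{2}} \tfrac{1}{s!} \At_{p_1} \cdots \At_{p_s} \, \theta_{p_1} \cdots \theta_{p_s} + (t\text{ terms})
\]
from \cite[(10.3)]{dm1102.2957}, then hits it with $\pi\, \theta_{q_1}^*\cdots\theta_{q_l}^*$, observes that $\pi$ kills everything except the $\theta$-degree zero piece with $s=l$ and $\mathbf{p}$ a permutation of $\mathbf{q}$, and evaluates the permutation sum using anticommutativity of the Atiyah classes. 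You instead avoid that external formula entirely: you verify $\pi_\infty=\pi$ by the $\theta$-degree argument (giving the base case via $\pi_\infty\sigma_\infty=1$), establish the operator identity $\theta_i^* = (1\otimes\At_i^{Y,X}) - [D,\partial_{t_i}]$ with $D = d_K+d_{Y\otimes X}$, and peel off one $\theta^*$ at a time, using that $\pi,\sigma_\infty$ are chain maps and each $\theta_j^*$ is $D$-closed to recognise the correction term as a graded commutator $[d_{Y\otimes X},\,\pi\partial_{t_{q_1}} B\,\sigma_\infty]$, hence null-homotopic. What your approach buys: it is self-contained (no citation to the computation in \cite{dm1102.2957}) and it isolates the conceptual reason the lemma holds — the identity $[D,\partial_{t_i}] = \At_i - \theta_i^*$ — rather than burying it in sign-and-permutation bookkeeping. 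What the paper's approach buys: it is a one-shot computation yielding an explicit (not just inductive) homotopy, which matters for anyone wanting the homotopy as concrete data. Both are valid proofs.
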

\begin{proof}
This is implicit in \cite[Section 10]{dm1102.2957}. The formula $\sigma_\infty = \sum_{m \ge 0} (-1)^m(H d_{Y \otimes X})^m \sigma$ is given there, and with $t_i = \partial_{z_i} V$ the calculation that takes place in \cite[(10.3)]{dm1102.2957} yields
\begin{equation}\label{eq:atiyah_formula_sigma}
\sigma_\infty = \sum_{s \ge 0} \sum_{p_1,\ldots,p_s} (-1)^{\binom{s+1}{2}} \frac{1}{s!} [ d_{Y \otimes X}, \partial_{t_{p_1}}] \cdots [ d_{Y \otimes X}, \partial_{t_{p_s}}] \theta_{p_1} \cdots \theta_{p_s} + (t \text{ terms})\,.
\end{equation}
That is, $\sigma_\infty$ is given modulo the submodule $(t_1,\ldots,t_m) K \otimes Y \otimes X$ by the formula above. Since $\pi$ vanishes on this submodule,
\begin{align*}
\pi \theta_{q_1}^* \cdots \theta_{q_l}^* \sigma_\infty &= \sum_{s \ge 0} \sum_{p_1,\ldots,p_s} (-1)^{\binom{s+1}{2}} \frac{1}{s!} \pi \theta_{q_1}^* \cdots \theta_{q_l}^* \At_{p_1} \cdots \At_{p_s} \theta_{p_1} \cdots \theta_{p_s}\\
&= \sum_{s \ge 0} \sum_{p_1,\ldots,p_s} (-1)^{\binom{s+1}{2} + sl} \frac{1}{s!} \At_{p_1} \cdots \At_{p_s} \pi \theta_{q_1}^* \cdots \theta_{q_l}^* \theta_{p_1} \cdots \theta_{p_s}\,.
\end{align*}
Now $\pi$ is non-vanishing only on terms of $\theta$-degree zero, so the sum restricts to $s = l$ and to $\bold{p}$ a permutation of $\bold{q}$. Since
\[
\theta_{q_1}^* \cdots \theta_{q_l}^* \theta_{q_{\sigma(1)}} \cdots \theta_{q_{\sigma(l)}} = (-1)^{|\sigma|} (-1)^{\binom{l}{2}}
\]
we have, using the fact that the Atiyah classes anti-commute,
\begin{align*}
\pi \theta_{q_1}^* \cdots \theta_{q_l}^* \sigma_\infty &= \sum_{\sigma \in \mathfrak{S}_l} (-1)^{|\sigma|} \frac{1}{l!} \At_{q_{\sigma(1)}} \cdots \At_{q_{\sigma(l)}}\\
&\simeq \At_{q_1} \cdots \At_{q_l}
\end{align*}
which completes the proof.
\end{proof}

Finally, we will need naturality of Theorem \ref{theorem:htpy_equivalence_main}.

\begin{proposition}\label{prop:naturality_main_thm} If $\phi: X \lto X'$ and $\psi: Y \lto Y'$ are morphisms of matrix factorisations the diagram
\begin{equation}\label{eq:final_finite_naturality}
\xymatrix@C+4pc@R+1pc{
Y \l X \ar[d]_-{\psi \l \phi} & S_m \otimes_k ( Y \otimes X )\ar[l]_-{\Phi} \ar[d]^-{\psi \otimes \phi}\\
Y' \l X' & S_m \otimes_k ( Y' \otimes X' )\ar[l]^-{\Phi}
}
\end{equation}
commutes up to $k[x,z]$-linear homotopy.
\end{proposition}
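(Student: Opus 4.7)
The plan is to decompose $\Phi = \pi \circ \exp(-\delta) \circ \epsilon$ as in Theorem \ref{theorem:htpy_equivalence_main} and verify naturality of each of the three factors separately. Two of the three are natural strictly on the nose: $\epsilon: Y \otimes X \lto Y \,\check{\otimes}\, X$ is the canonical inclusion into the completed tensor product, so $\epsilon_{Y',X'} \circ (\psi \otimes \phi) = (\psi \,\check{\otimes}\, \phi) \circ \epsilon_{Y,X}$; and $\pi$ is induced by the projection $\check{K} \lto J_V$ coming from the splitting homotopy $H$ of \eqref{eq:originalHdef}, which depends only on the connection $\nabla$ and not on $Y$ or $X$, so $(\psi \l \phi) \circ \pi_{Y,X} = \pi_{Y',X'} \circ (1 \otimes \psi \,\check{\otimes}\, \phi)$.

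The substantive step is naturality of $\exp(-\delta)$ up to $k[x,z]$-linear homotopy. Recall $\delta = \sum_i \lambda_i \theta_i^*$ with $\lambda_i = \partial_{y_i}(d_X)$, and write $\lambda'_i = \partial_{y_i}(d_{X'})$, $\delta' = \sum_i \lambda'_i \theta_i^*$. By Lemma \ref{lemma:naturalityoflambda} applied to $\phi$, there is a $k[x,y]$-linear homotopy $\phi \lambda_i \simeq \lambda'_i \phi$; tensoring with $\psi$ and extending scalars to $k\llbracket y \rrbracket$ gives $(\psi \,\check{\otimes}\, \phi) \lambda_i \simeq \lambda'_i (\psi \,\check{\otimes}\, \phi)$ on $Y \,\check{\otimes}\, X$. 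Since $\theta_i^*$ is $k$-linear and commutes with $\psi \,\check{\otimes}\, \phi$, this yields
\[
(1_{S_m} \otimes \psi \,\check{\otimes}\, \phi) \delta \simeq \delta' (1_{S_m} \otimes \psi \,\check{\otimes}\, \phi).
\]

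A straightforward induction on $n$, composing the homotopy for $\delta$ on the right with $\delta^n$ and on the left with $\delta'$, promotes this to
\[
(1_{S_m} \otimes \psi \,\check{\otimes}\, \phi) \delta^n \simeq \delta'^n (1_{S_m} \otimes \psi \,\check{\otimes}\, \phi).
\]
Because $\delta$ and $\delta'$ are nilpotent the expansion $\exp(-\delta) = \sum_{n \ge 0} \frac{(-1)^n}{n!} \delta^n$ is a finite sum, so the inductively constructed homotopies $\Omega_n$ combine into a single $k[x,z]$-linear homotopy $\Omega = \sum_n \frac{(-1)^n}{n!} \Omega_n$ witnessing
\[
(1_{S_m} \otimes \psi \,\check{\otimes}\, \phi) \exp(-\delta) \simeq \exp(-\delta') (1_{S_m} \otimes \psi \,\check{\otimes}\, \phi).
\]
Composing the three factors yields the homotopy $(\psi \l \phi) \circ \Phi \simeq \Phi \circ (1_{S_m} \otimes \psi \otimes \phi)$.

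The only real obstacle is the inductive bookkeeping in the third paragraph; since $\delta$ is nilpotent and everything is $k[x,z]$-linear, this is routine rather than conceptually hard. Note that we are spared having to treat $\Phi^{-1}$ (which involves the more delicate perturbed splitting homotopy $\sigma_\infty$) because the diagram of Proposition \ref{prop:naturality_main_thm} only asserts naturality of $\Phi$; naturality of $\Phi^{-1}$ then follows formally from the fact that $\Phi$ is a homotopy equivalence.
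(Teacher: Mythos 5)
Your proposal is correct and takes essentially the same approach as the paper: decompose $\Phi = \pi \exp(-\delta)\epsilon$, note that $\pi$ and $\epsilon$ are natural on the nose, and reduce the naturality of $\exp(-\delta)$ up to homotopy to the statement that $\psi \,\check{\otimes}\, \phi$ commutes with $\delta$ up to homotopy, which is exactly the content of Lemma \ref{lemma:naturalityoflambda}. The paper's proof is more terse (it omits the inductive bookkeeping you spell out for promoting the homotopy from $\delta$ to $\exp(-\delta)$), but the structure and the key lemma invoked are identical.
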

\begin{proof}
Given the formula $\Phi = \pi \exp(-\delta) \epsilon$ it is clearly enough to show that $\psi \l \phi$ commutes up to homotopy with $\delta$, and for this it is enough to show that there is a homotopy
\[
\partial_{y_i}(d_{X'}) \phi \simeq \phi \partial_{y_i}(d_X)
\]
for each $i$. But this is the content of Lemma \ref{lemma:naturalityoflambda}.
\end{proof}

This achieves our aim: we took direct sums of copies of $Y \otimes X$ to form $S_m \otimes_k( Y \otimes X )$ and then used perturbation to find a finite model of this latter object. It is worth noting that, in the language of splitting homotopies, the cut $Y \l X$ is the solution of a fixed point problem formulated in terms of linear operators on the initial data $S_m \otimes_k( Y \otimes X )$. 

\begin{remark} The isomorphism of $C_m$-representations in the homotopy category
\[
\Phi: S_m \otimes_k (Y \otimes X) \lto Y \l X
\]
is straightforward to compute. Firstly, since $\Phi = \pi \exp(-\delta) \varepsilon$ and the operator $\delta = \sum_i \lambda_i \theta_i^*$ annihilates the identity of the exterior algebra $1 \in S_m$, the composite
\be
\xymatrix@C+2pc{
( k \cdot 1 ) \otimes_k ( Y \otimes X ) \ar[r]^-{\operatorname{inc}} & S_m \otimes_k ( Y \otimes X ) \ar[r]^-{\Phi} & Y \l X
}
\ee
is just the quotient map $Y \otimes X \lto Y \l X$, which we write as $\nu \mapsto \bar{\nu}$. Using
\[
\exp(-\delta) = 1 - \sum_i \lambda_i \theta_i^* - \frac{1}{2}\sum_{i,j} \lambda_i \lambda_j \theta_i^* \theta_j^* + \cdots
\]
we have for example
\begin{align*}
\Phi( \theta_1 \theta_2 \otimes \nu ) &= - \frac{1}{2} \sum_{i,j} \pi \lambda_i \lambda_j  \theta_i^* \theta_j^*( \theta_1 \theta_2 \otimes \nu )\\
&= \frac{1}{2}\left( \lambda_1 \lambda_2 -\lambda_2 \lambda_1 \right)( \bar{\nu} )\,.
\end{align*}
\end{remark}

\subsection{The equivalence of $\L$ and $\LG$}\label{section:equivalenceforreal}

The bicategories $\LG$ and $\L$ have the same objects. Their morphism categories are 
\begin{align*}
\L(W,V) &= \Big( \hmf( k[x,y], V - W )^{\omega} \Big)^{\bullet}\,,\\
\LG(W,V) &= \hmf( k[x,y], V - W )^{\oplus}
\end{align*}
with the notation from Section \ref{section:superbicatLG}. These are equivalent supercategories, via the following chain of equivalences
\be
\xymatrix@C-1pc{
& \ar[dl]_-{\iota}\hmf( k[x,y], V - W )^{\omega}\ar[dr]^-{F}\\
\Big( \hmf( k[x,y], V - W )^{\omega} \Big)^{\bullet} &  & \hmf( k[x,y], V - W )^{\oplus}\ar@{.>}[ll]^-{\cat{Z}_{W,V}}
}
\ee
where $\iota$ is the equivalence of a supercategory with its Clifford thickening, $F$ is the functor given earlier in \eqref{eq:omegavsoplus} and we define $\cat{Z}_{W,V} = \iota \circ F^{-1}$ to make the diagram commute. 

The functor $F$ is defined by choosing for each object $(X,e)$ a (possibly infinite rank) matrix factorisation $F(X,e)$ which splits the idempotent $e$. That is, such that there are morphisms in the homotopy category
\[
\xymatrix@C+2pc{
X \ar@<1ex>[r]^-{f} & F(X,e) \ar@<1ex>[l]^-{g}
}
\]
with $f g = 1$ and $gf = e$. The inverse is defined by choosing, for each infinite rank matrix factorisation $T$ which is a summand in the homotopy category of something finite rank, an actual finite rank $X$ and morphisms $f,g$ with $fg = 1_T$, and defining
\[
F^{-1}(T) = (X,gf)\,.
\]
We may arrange these choices so that if $T$ is \emph{already} finite rank then $F^{-1}(T) = (T, 1_T)$.

\begin{definition} A \emph{strong superfunctor} $F: \cat{C} \lto \cat{D}$ between superbicategories without units is a strong superfunctor in the sense of Definition \ref{defn:laxsuperfunctor}, but without the data related to units. We say further that $F$ is an \emph{equivalence} if 
\begin{itemize}
\item for all $a,b \in \cat{C}$, $F_{a,b}: \cat{C}(a,b) \lto \cat{D}(Fa, Fb)$ is an equivalence of categories, and
\item for all $c \in \cat{D}$ there exists $a \in \cat{C}$ with $Fa \cong c$ in the sense that there are $1$-morphisms $f: Fa \lto c$ and $g: c \lto Fa$ with $fg \cong 1$ and $gf \cong 1$.
\end{itemize}
\end{definition}

\begin{theorem} There is an equivalence $\cat{Z}: \LG \lto \L$ of superbicategories without units.
\end{theorem}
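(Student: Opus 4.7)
The strategy is to take $\cat{Z}$ to be the identity on objects and $\cat{Z}_{W,V} = \iota \circ F^{-1}$ on hom-categories, which is already an equivalence of supercategories by construction. Thus both the local equivalence condition and essential surjectivity on objects come for free, and the real content lies in building the compositor $\chi_{Y,X}\colon \cat{Z}(Y) \l \cat{Z}(X) \lto \cat{Z}(Y \otimes X)$ and verifying the coherence axioms.

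For finite rank $Y\colon V \lto U$ and $X\colon W \lto V$ in $\LG$ we have $\cat{Z}(Y \otimes X) = (Y \otimes X, 0)$ in $\L(W, U)$ (a matrix factorisation with trivial $C_0$-action), while $\cat{Z}(Y) \l \cat{Z}(X)$ is $Y \l X$ equipped with the $C_m$-action from Definition \ref{defn:cliffordaction_cut}. The compositor $\chi_{Y,X}$ is the two-step isomorphism
\[
(Y \l X, m) \xlto{\Phi^{-1}} (S_m \otimes_k (Y \otimes X), m) \xlto{\;f\;} (Y \otimes X, 0)
\]
in the Clifford thickening, where $\Phi^{-1}$ is the isomorphism of $C_m$-representations furnished by Theorem \ref{theorem:htpy_equivalence_main} together with Proposition \ref{prop:clifford_action}, and $f$ is the canonical isomorphism provided by Lemma \ref{lemma:whackamole} by splitting the idempotent $e_m$ acting on $S_m$. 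Proposition \ref{prop:naturality_main_thm} and naturality of the splitting in Lemma \ref{lemma:whackamole} together show that $\chi_{Y,X}$ is natural in $Y$ and $X$. For general $1$-morphisms (infinite-rank summands equipped with their own Clifford actions), the same construction applies after unpacking via $F^{-1}$, with the combined Clifford action handled by the isomorphism $C_r \otimes C_m \otimes C_s \cong C_{r+m+s}$ exploited in the proof of Proposition \ref{prop:cutisafunctor}.

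The datum $\kappa$ for the superfunctor structure is taken to be the identity: both $\LG$ and $\L$ implement $\Psi$ via the shift $(-)[1]$, and $\cat{Z}$ respects this on the nose, so the two coherence conditions of Definition \ref{defn:laxsuperfunctor} become tautologies.

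The main obstacle is the associator coherence. For a composable triple $Z, Y, X$ as in \eqref{eq:composable_triple}, one must check that the hexagon built from $\chi$ and the associators of $\L$ and $\LG$ commutes up to homotopy. I would prove this by exhibiting both composite paths as arising from the \emph{same} strong deformation retract of $S_p \otimes_k S_m \otimes_k (Z \otimes Y \otimes X)$, obtained by applying the perturbation of Strategy \ref{strategy} to the two cuts simultaneously. Because the Koszul splitting homotopies for the $y$- and $z$-variables act on disjoint tensor factors, they commute, so the two orders of perturbation yield homotopic finite models. The associator $\alpha$ for $\L$ (Lemma \ref{lemma:associator}) was introduced precisely to intertwine the resulting $C_m \otimes C_p$-actions on $(Z \l Y) \l X$ and $Z \l (Y \l X)$, and under $\chi$ this matches the usual associator for $\otimes$ on the $\LG$ side, completing the coherence check.
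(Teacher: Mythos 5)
Your proposal follows essentially the same route as the paper: identity on objects, $\cat{Z}_{W,V}=\iota\circ F^{-1}$ on hom-categories, and a compositor built from the $C_m$-equivariant homotopy equivalence $\Phi$ of Theorem \ref{theorem:htpy_equivalence_main}/Proposition \ref{prop:clifford_action}, Lemma \ref{lemma:whackamole} for splitting $e_m$, and Proposition \ref{prop:naturality_main_thm} for naturality. One point to tighten: you write $\cat{Z}(Y\otimes X)=(Y\otimes X,0)$ ``in $\L(W,U)$,'' but $Y\otimes X$ is infinite rank over $k[x,z]$, so $(Y\otimes X,0)$ is not literally an object of $\L(W,U)=(\hmf^\omega)^\bullet$. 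The paper instead exploits the freedom in choosing $F^{-1}$ to \emph{define} $\cat{Z}(Y\otimes X)=(Y\l X,e_m)$, which is legitimate precisely because $\Phi$ identifies the idempotent $e_m$ on $Y\l X$ with projection onto the $\theta$-degree-zero summand $Y\otimes X$; with that choice the compositor $\varphi_{Y,X}$ is simply the identity on $Y\l X$ regarded in the Clifford thickening. Your chain of maps computes the same isomorphism but passes through objects outside $\L(W,U)$, so it should be phrased as a calculation in $\HMF^\bullet$ that descends to the subcategory. On the other hand, you go further than the paper in two places: you supply the superfunctor datum $\kappa$ explicitly (the paper does not discuss it), and you sketch the hexagon for the associator by commuting the two Koszul perturbations for the $y$- and $z$-variables — the paper compresses this to ``the rest follows by functoriality.'' Your sketch is plausible but remains a sketch; a complete argument would need to verify that the two composite splitting homotopies on $S_p\otimes_k S_m\otimes_k(Z\otimes Y\otimes X)$ yield the same transferred Clifford action up to homotopy, not merely homotopy-equivalent underlying factorisations.
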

\begin{proof}
We define $\mathcal{Z}$ to be the identity on objects and $\mathcal{Z}_{W,V}$ on $1$- and $2$-morphisms. The only remaining content is to produce a $2$-isomorphism
\be\label{eq:superfuncvarphi}
\varphi_{Y,X}: \cat{Z}_{V,U}(Y) \l \cat{Z}_{W,V}(X) \lto \cat{Z}_{W,U}( Y \otimes X )
\ee
in the usual situation of Setup \ref{setupforfusion}, which is natural in both $Y$ and $X$. Once we have done this the rest follows by functoriality.

Since $Y,X$ in this case are already finite rank $\cat{Z}(Y) = Y$ and $\cat{Z}(X) = X$.  The calculation of $\cat{Z}( Y \otimes X )$ is more interesting, because $Y \otimes X$ is infinite rank over $k[x,z]$. However we know from Proposition \ref{prop:clifford_action} that there is an isomorphism in the homotopy category of (infinite rank) matrix factorisations of $U - W$ over $k[x,z]$
\begin{equation}\label{eq:final_finite_modelagain}
\xymatrix@C+4pc{
Y \l X \ar@<-1ex>[r]_-{\Phi^{-1}} & S_m \otimes_k ( Y \otimes X )\ar@<-1ex>[l]_-{\Phi}
}
\end{equation}
where $Y \l X$ is finite rank. Recall from Definition \ref{defn:idempotent_e} that the idempotent
\[
e_m = \ferm_1 \cdots \ferm_m \fermc_m \cdots \fermc_1 \in C_m
\]
represents projection onto $k \cdot 1$ in $S_m$. Since \eqref{eq:final_finite_modelagain} is an isomorphism of $C_m$-representations the idempotent $e_m$ acting on $Y \l X$ must split to the same thing as the idempotent $e_m$ acting on the right hand side, which obviously splits to $Y \otimes X$. Put differently, there is a diagram in the homotopy category
\[
\xymatrix@C+2pc{
Y \l X \ar@<1ex>[r]^-{f} & Y \otimes X \ar@<1ex>[l]^-{g}
}
\]
where $g$ is the quotient map, and $f g = 1_{Y \otimes X}$ while $g f = e_m$.\footnote{$f$ is $\Phi^{-1}$ followed by projection onto $\theta$-degree zero.} Then we may take as our definition
\[
\cat{Z}( Y \otimes X ) = (Y \l X, e_m)\,.
\]
So the problem is to define a $2$-isomorphism \eqref{eq:superfuncvarphi}, that is,
\[
\varphi_{Y,X}: (Y \l X, 1, \{ \ferm_i, \fermc_i \}_{i=1}^m) \lto (Y \l X, e_m)
\]
in the Clifford thickening of the idempotent completion of $\hmf(k[x,z],U - W)$, where the left hand side is equipped with the idempotent $1$ and the action $\{ \ferm_i, \fermc_i \}_{i=1}^m$ of $C_m$ while the right hand side has no Clifford action and idempotent $e_m$.

But by Lemma \ref{lemma:whackamole} the identity map $\varphi_{Y,X} = 1_{Y \l X}$ is such an isomorphism, because the object $(Y \l X, e_m)$ of the idempotent completion by definition splits the idempotent $e_m$ on $Y \l X$. Naturality of \eqref{eq:superfuncvarphi} in $Y,X$ is a consequence of the naturality of \eqref{eq:final_finite_modelagain}, see Proposition \ref{prop:naturality_main_thm}.
\end{proof}

\begin{remark} Obviously the $1$-morphisms $\Delta_W: W \lto W$ which are the units in $\LG$ are, by virtue of the equivalence $\mathcal{Z}$, also units in $\L$, so that this is an honest bicategory and $\mathcal{Z}$ an equivalence of bicategories. The reason we do not begin with units in $\L$ is that the unitors are no simpler in $\L$ than $\LG$, so there is no real point to introducing them.
\end{remark}

\subsection{Example: the Hom complex}\label{example:computing_homs}

Using the bicategory $\L$ we can study the complex $\Hom_R(Y,X)$ for two finite rank matrix factorisations $X,Y$ of a potential $V \in R = k[y_1,\ldots,y_m]$. The pair $(k,0)$ is an object in $\LG$ and we may view $X$ and the dual matrix factorisation $Y^{\vee}$ of $-V$ as a pair of $1$-morphisms
\begin{equation}\label{eq:compute_hom_composition}
\xymatrix@C+2pc{
0 \ar[r]^-{X} & V \ar[r]^-{Y^{\vee}} & 0
}\,.
\end{equation}
The composite is a $1$-morphism $0 \lto 0$, that is, $\mathbb{Z}_2$-graded complex of $k$-modules. This is the Hom complex of $R$-linear homogeneous maps $Y \lto X$,
\[
Y^{\vee} \otimes_R X \cong \Hom_R(Y, X)
\]
with the differential $\phi \mapsto d_X \circ \phi - (-1)^{|\phi|} \phi \circ d_Y$. The cut is
\[
Y^{\vee} \l X = Y^{\vee} \otimes_R J_V \otimes_R X \cong \Hom_{J_V}( \bar{Y}, \bar{X} )
\]
where $\bar{X} = X \otimes_R J_V, \bar{Y} = Y \otimes_R J_V$. There is by Theorem \ref{theorem:htpy_equivalence_main} a $C_m$-linear isomorphism in the homotopy category of $\mathbb{Z}_2$-graded complexes of free $k$-modules
\be
\Phi: S_m \otimes_k \Hom_R(Y,X) \lto Y^{\vee} \l X\,.
\ee

\begin{example} Suppose $V = y^N$ in $k[y]$ for $N \ge 2$. Take matrix factorisations $X,Y$ with the same underlying free $R = k[y]$-module $X = Y = R \oplus R \theta$ and differentials
\[
d_Y = y^i \theta + y^{N-i} \theta^* = \begin{pmatrix} 0 & y^{N-i} \\ y^i & 0 \end{pmatrix}, \qquad d_X = y^j \theta + y^{N-j} \theta^* = \begin{pmatrix} 0 & y^{N-j}\\ y^j & 0 \end{pmatrix}
\]
where $1 \le i,j \le N/2$. Using the basis $\Hom_R(Y,X) = R \theta \theta^* \oplus R \theta^* \theta \oplus R\theta \oplus R \theta^*$,
\begin{equation}\label{eq:differential_hom}
d_{\Hom} = \begin{pmatrix} 0 & 0 & y^{N-i} & y^j \\ 
0 & 0 & y^{N-j} & y^i \\
-y^i & y^j & 0 & 0 \\
y^{N-j} & -y^{N-i} & 0 & 0 \end{pmatrix}\,.
\end{equation}
Let us now compute the cut $Y^{\vee} \l X$, according to Definition \ref{defn:cliffordaction_cut}. With
\be\label{eq:examplecut_basis2}
J_V = k[y]/y^{N-1} = k \cdot 1 \oplus k \cdot y \oplus \cdots \oplus k \cdot y^{N-2}
\ee
the matrix of multiplication by $y$ on $J_V$ is $[y] = \begin{pmatrix} 0 & 0\\ I_{N-2} & 0 \end{pmatrix}$. The differential on
\be\label{eq:examplecut_basis}
Y^{\vee} \l X = J_V \theta \theta^* \oplus J_V \theta^* \theta \oplus J_V\theta \oplus J_V \theta^*
\ee
is then obtained from \eqref{eq:differential_hom} by replacing every $y$ by the matrix $[y]$.

The Clifford algebra $C_1$ acts on $Y^{\vee} \l X$ by generators $\ferm, \fermc$. To determine their matrices in the basis determined by \eqref{eq:examplecut_basis2} and \eqref{eq:examplecut_basis}, note that with $t = \partial_y(V) = N y^{N-1}$, $k[y]$ is a free $k[t]$-module with basis $1, \ldots, y^{N-2}$. Given a polynomial $f(y)$ we may uniquely write it as $f(y) = \sum_{l = 0}^{N-2} f_l \cdot y^l$ with $f_l \in k[t]$. There is a connection $\nabla: k[y] \lto k[y] \otimes_{k[t]} \Omega^1_{k[t]/k}$ whose associated $k$-linear operator $\partial_t$ is
\[
\partial_t: k[y] \lto k[y], \qquad \partial_t(f) = \sum_{l = 0}^{N-2} \frac{\partial}{\partial t}( f_l ) y^l
\]
where $\frac{\partial}{\partial t}$ is the usual operator on $k[t]$. For $q < N - 1$ we have $\partial_t(y^q) = 0$ and $\partial_t(y^{N-1+q}) = \frac{1}{N} y^q$, so the $k$-linear operator
\[
R_a: J_V \lto J_V \,, \qquad y^q \mapsto \partial_t( y^{a+q} )
\]
has the following block form for $0 \le a \le N - 2$
\[
[R_a] = \frac{1}{N} \begin{pmatrix} 0 & I_{a} \\ 0 & 0 \end{pmatrix}\,.
\]
The Atiyah class is the operator $\At = [d_{\Hom}, \partial_t] = d_{\Hom} \partial_t - \partial_t d_{\Hom}$. For example,
\begin{align*}
\At( y^q \theta \theta^* ) &= - \partial_t d_{\Hom}( y^q \theta \theta^* ) = \partial_t( y^{i+q} ) \theta - \partial_t( y^{N-j+q} ) \theta^*\,.
\end{align*}
The matrix of the Atiyah class as an operator $\ferm$ on $Y^{\vee} \l X$ is therefore
\begin{equation}\label{eq:aityah_class_hom_example}
\ferm = \begin{pmatrix} 0 & 0 & -[R_{N-i}] & -[R_{j}] \\ 
0 & 0 & -[R_{N-j}] & -[R_{i}] \\
[R_i] & -[R_j] & 0 & 0 \\
-[R_{N-j}] & [R_{N-i}] & 0 & 0 \end{pmatrix}\,.
\end{equation}
Whereas the matrix of $\partial_{y}(d_X) = j y^{j-1} \theta + (N-j) y^{N-j-1} \theta^*$ on $Y^{\vee} \l X$ is
\[
\partial_y(d_X) = \begin{pmatrix} 0 & 0 & 0 & j [y^{j-1}] \\
0 & 0 & (N-j) [y^{N-j-1}] & 0\\
0 & j [y^{j-1}] & 0 & 0\\
(N-j) [y^{N-j-1}] & 0 & 0 & 0\end{pmatrix}\,.
\]
By definition
\[
\fermc = - \partial_y(d_X) - \frac{1}{2} N (N-1) y^{N-2} \ferm
\]
which completes the description of the cut $( Y^{\vee} \l X, \{ \ferm, \fermc \} )$. 
\end{example}


\appendix

\section{Tensor products in supercategories}\label{section:tensorproduct_supcat}

Let $\cat{C}$ be an idempotent complete supercategory.

\begin{definition} Let $V$ be a $\mathbb{Z}_2$-graded $k$-module and $X$ an object of $\cat{C}$. An object of $\cat{C}$ representing the functor $\Hom^0_k( V, \cat{C}^*(X, -))$ is denoted $V \otimes_k X$ if it exists. That is to say, the tensor product consists of an object $V \otimes_k X$ and a natural isomorphism
\[
\rho_Y: \cat{C}( V \otimes_k X, Y ) \lto \Hom^0_k( V, \cat{C}^*(X,Y) )\,.
\]
Such a pair is unique up to unique isomorphism, if it exists.
\end{definition}

\begin{remark} 
From $\rho$ we also obtain an isomorphism of $\mathbb{Z}_2$-graded $k$-modules
\[
\cat{C}^*( V \otimes_k X, Y ) \cong \Hom_k^*( V, \cat{C}^*(X,Y) )\,.
\]
The tensor product $V \otimes_k X$ is made functorial in $V$ and $X$ in such a way that $\rho$ is natural in all its variables.
\end{remark}

\begin{example} If $V = k^{\oplus n} \oplus k[1]^{\oplus m}$ is finite and free then
\[
V \otimes_k X = X^{\oplus n} \oplus \Psi X ^{\oplus m}
\]
is a representing object, with the obvious isomorphism $\rho$.
\end{example}

\begin{lemma} If $\cat{C}$ is idempotent complete and $V$ is a finitely generated projective $\mathbb{Z}_2$-graded $k$-module, then $V \otimes_k X$ exists for any object $X$.
\end{lemma}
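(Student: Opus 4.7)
The plan is to build $V \otimes_k X$ by splitting an idempotent on a free tensor product. Since $V$ is finitely generated projective as a $\mathbb{Z}_2$-graded $k$-module, there exist integers $n,m \ge 0$, a finitely generated free graded module $F = k^{\oplus n} \oplus k[1]^{\oplus m}$, and an idempotent $e: F \lto F$ of degree zero whose image is $V$ (so that $F \cong V \oplus V'$ with $V'$ the image of $1-e$). By the previous example $F \otimes_k X$ exists as the object $X^{\oplus n} \oplus \Psi X^{\oplus m}$ of $\cat{C}$.

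Next I would apply $(-) \otimes_k X$ (which is functorial in its first argument) to obtain a degree zero endomorphism $e \otimes 1_X: F \otimes_k X \lto F \otimes_k X$ that is idempotent. Since $\cat{C}$ is assumed idempotent complete this idempotent splits in $\cat{C}$, yielding an object $Z$ together with morphisms $\pi: F \otimes_k X \lto Z$ and $\iota: Z \lto F \otimes_k X$ satisfying $\pi \iota = 1_Z$ and $\iota \pi = e \otimes 1_X$. I propose to define $V \otimes_k X := Z$.

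To check the representability condition, I would use the natural isomorphism
\[
\rho^F_Y: \cat{C}(F \otimes_k X, Y) \lto \Hom^0_k(F, \cat{C}^*(X, Y))
\]
guaranteed by the previous example and chase the idempotent through it. Naturality of $\rho^F$ in the first argument means that $\rho^F_Y$ intertwines precomposition with $e \otimes 1_X$ on the left with precomposition with $e$ on the right. Both sides are therefore direct sum decompositions of abelian groups, with the $e$-summand on the left given by $\cat{C}(Z,Y)$ (via $\pi, \iota$) and the $e$-summand on the right identified with $\Hom^0_k(V, \cat{C}^*(X,Y))$ (since $V$ is the image of the split idempotent $e$ on $F$). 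The induced bijection
\[
\cat{C}(Z, Y) \lto \Hom^0_k(V, \cat{C}^*(X, Y))
\]
is natural in $Y$ because the whole construction was, so $Z$ represents the required functor and may be taken as $V \otimes_k X$.

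The only step with any real content is the compatibility of the two idempotent splittings: one in $\cat{C}$ (to produce $Z$) and the other purely algebraic (to produce $V$ from $F$). But given the naturality of $\rho^F$ in $F$, this is a formal consequence of the fact that the functors $\cat{C}(-,Y)$ and $\Hom^0_k(-,\cat{C}^*(X,Y))$ both convert the splitting $F = V \oplus V'$ into compatible direct sum decompositions on each side.
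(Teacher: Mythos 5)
Your proof is correct and follows the same strategy as the paper: express $V$ as the image of a degree-zero idempotent $e$ on a finite free $\mathbb{Z}_2$-graded module, transport $e$ to an idempotent on the (existing) tensor product with the free module, and split it in $\cat{C}$. The paper's version is considerably terser — it simply asserts that the splitting object ``has the right property'' — whereas you spell out the verification of the universal property via $\rho^F$; the content is the same.
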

\begin{proof}
Let $V'$ be a finite free $k$-module with $f: V' \lto V, g: V \lto V'$ satisfying $fg = 1$. This determines an idempotent $1 \otimes e$ on $V' \otimes_k X$, which splits by hypothesis in $\cat{C}$. The object splitting this idempotent has the right property to be $V \otimes_k X$.
\end{proof}

More generally, let $A$ be a $\mathbb{Z}_2$-graded $k$-algebra.

\begin{definition} Given a right $\mathbb{Z}_2$-graded $A$-module $V$ and an object $X$ of $\cat{C}$, we denote by $V \otimes_A X$ the object representing the functor $\Hom^0_A( V, \cat{C}^*(X,-))$ if it exists. That is, the tensor product is an object $V \otimes_A X$ together with a natural isomorphism
\begin{equation}\label{eq:isodefatensor}
\rho_Y: \cat{C}(V \otimes_A X, Y) \lto \Hom^0_A( V, \cat{C}^*(X,Y))\,.
\end{equation}
As before, the tensor product is functorial in both $V$ and $X$.
\end{definition}

Henceforth we concentrate our attention on $\mathbb{Z}_2$-graded $k$-algebras $A$ which are \emph{Morita trivial} in the sense that they are isomorphic to an algebra of the form $\End_k(P)$ for a finite rank free $\mathbb{Z}_2$-graded $k$-module $P$.

\begin{lemma} If $P$ is a finite rank free $\nZ_2$-graded $k$-module and $A \cong \End_k(P)$, then for any $A$-module $X$ in $\cat{C}$ there is an object $\widetilde{X}$ and an isomorphism of $A$-modules $X \cong P \otimes_k \widetilde{X}$.
\end{lemma}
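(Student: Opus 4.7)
The plan follows the standard Morita-trivialization argument for $A = \End_k(P)$, internalized to the supercategory $\cat{C}$. Choose a homogeneous $k$-basis $p_1, \ldots, p_r$ of $P$; by reordering and (if every basis element is odd) replacing $P$ by $\Psi P$, I may assume $|p_1| = 0$. Let $E_{ij} \in A$ denote the corresponding matrix units, so $E_{ij} p_k = \delta_{jk} p_i$, the diagonal elements $e_i = E_{ii}$ are even idempotents with $\sum_i e_i = 1_A$, and $E_{ij}$ has parity $|p_i| + |p_j|$.

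First, I would use idempotent completeness of $\cat{C}$ to split the even idempotent endomorphism of $X$ induced by $e_1$, producing an object $\widetilde{X}$ together with morphisms $f_0: X \lto \widetilde{X}$ and $g_0: \widetilde{X} \lto X$ satisfying $f_0 g_0 = 1_{\widetilde{X}}$ and $g_0 f_0 = e_1$. Next, using the decomposition
\[
P \otimes_k \widetilde{X} \;\cong\; \bigoplus_{|p_i|=0} \widetilde{X} \;\oplus\; \bigoplus_{|p_i|=1} \Psi\widetilde{X}
\]
afforded by the chosen basis of $P$, I would define mutually inverse $A$-linear morphisms $\alpha: P \otimes_k \widetilde{X} \lto X$ and $\beta: X \lto P \otimes_k \widetilde{X}$ by specifying their components on the $i$-th summand to be $E_{i1} g_0$ and $f_0 E_{1i}$ respectively. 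Both have the correct degree, since $|E_{i1}| = |E_{1i}| = |p_i|$.

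Verifying that $\alpha$ and $\beta$ are mutually inverse reduces entirely to calculations in $A$: on the one hand $\alpha\beta = \sum_i E_{i1} g_0 f_0 E_{1i} = \sum_i E_{i1} e_1 E_{1i} = \sum_i E_{ii} = 1_X$, and on the other the $i$-th component of $\beta\alpha$ applied to $p_j \otimes x$ is $f_0 E_{1i} E_{j1} g_0(x) = \delta_{ij} f_0 e_1 g_0(x) = \delta_{ij}\,x$. $A$-linearity of $\alpha$ then follows from $E_{kl} \cdot \alpha(p_j \otimes x) = E_{kl} E_{j1} g_0(x) = \delta_{lj} E_{k1} g_0(x) = \alpha(E_{kl} p_j \otimes x)$, whence $\beta = \alpha^{-1}$ is $A$-linear automatically. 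The only substantive obstacle is grading bookkeeping — ensuring that $\alpha, \beta$ are honest degree-zero morphisms in $\cat{C}$ and that the $A$-action on $P \otimes_k \widetilde{X}$ arising from Appendix \ref{section:tensorproduct_supcat} coincides with the one used in the check of $A$-linearity — and reducing to $|p_1|=0$ at the outset sidesteps most of it, leaving the argument a routine manipulation of matrix units.
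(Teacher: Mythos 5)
Your proof is correct and takes essentially the same approach as the paper's (one-sentence) argument: the paper simply observes that the matrix-unit idempotents of $A \cong \End_k(P)$ act as idempotents on $X$ and split, and your proof carries this out explicitly by splitting $e_1 = E_{11}$ to obtain $\widetilde{X}$ and then using the off-diagonal matrix units $E_{i1}, E_{1i}$ to assemble the mutually inverse $A$-linear maps between $X$ and $P \otimes_k \widetilde{X}$. The degree bookkeeping you flag is indeed the only delicate point, and your reduction to $|p_1| = 0$ (replacing $P$ by $\Psi P$ and $\widetilde{X}$ by $\Psi\widetilde{X}$ if necessary) handles it cleanly.
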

\begin{proof}
The natural idempotents in $A$ act as idempotents on $X$, which split.
\end{proof}

\begin{lemma} If $A$ is Morita trivial and $V$ is a $\mathbb{Z}_2$-graded right $A$-module which is finitely generated and projective as a $k$-module, then $V \otimes_A X$ exists for any $A$-module $X$.
\end{lemma}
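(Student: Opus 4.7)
The strategy is to use the Morita triviality of $A$ to reduce existence of $V \otimes_A X$ to the existence of a tensor product with a finitely generated projective $k$-module, which is already guaranteed by the earlier lemma. Write $A \cong \End_k(P)$ for some finite rank free $\mathbb{Z}_2$-graded $k$-module $P$, and let $P^* = \Hom_k(P,k)$, which carries a natural right $A$-module structure and a left $A$-module structure respectively such that $P^* \otimes_A P \cong k$ and $P \otimes_k P^* \cong A$ as bimodules. This is the usual Morita equivalence between $k$ and $A$.

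First I would apply the preceding lemma to the $A$-module $X$ in $\cat{C}$ to get an object $\widetilde{X}$ and an isomorphism of $A$-modules $X \cong P \otimes_k \widetilde{X}$. On the $V$ side, I set $\widetilde{V} = V \otimes_A P$, which is a $k$-module, and observe that under the Morita equivalence $V \cong \widetilde{V} \otimes_k P^*$ as right $A$-modules. Since $V$ is finitely generated projective over $k$ and $P$ is finitely generated free, $\widetilde{V}$ is a direct summand of $V^{\oplus \rank P}$ and hence is itself finitely generated projective over $k$. By the earlier lemma the tensor product $\widetilde{V} \otimes_k \widetilde{X}$ exists in $\cat{C}$, and this is the candidate for $V \otimes_A X$.

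To verify the universal property, I would chain natural isomorphisms of $\mathbb{Z}_2$-graded $k$-modules for any $Y \in \cat{C}$:
\[
\cat{C}^*(\widetilde{V} \otimes_k \widetilde{X}, Y) \cong \Hom_k^*(\widetilde{V}, \cat{C}^*(\widetilde{X}, Y)) \cong \Hom_A^*(\widetilde{V} \otimes_k P^*, \Hom_k^*(P, \cat{C}^*(\widetilde{X}, Y))),
\]
where the first isomorphism is the defining property of the tensor product over $k$ and the second is the Morita hom-tensor adjunction between $k$-modules and right $A$-modules. Using $V \cong \widetilde{V} \otimes_k P^*$ and the natural isomorphism $\cat{C}^*(P \otimes_k \widetilde{X}, Y) \cong \Hom_k^*(P, \cat{C}^*(\widetilde{X}, Y))$ of left $A$-modules (which comes from the same defining property of the $k$-tensor product together with the $A$-module structure on $X$), the right hand side is canonically identified with $\Hom_A^*(V, \cat{C}^*(X, Y))$. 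Restricting to degree zero gives the isomorphism \eqref{eq:isodefatensor}, and naturality in $Y$ is immediate from the naturality of all the intermediate identifications.

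The only subtle point I anticipate is checking that the $A$-module structures match up correctly in the chain of isomorphisms, in particular that the identifications $X \cong P \otimes_k \widetilde{X}$ and $\cat{C}^*(X, Y) \cong \Hom_k^*(P, \cat{C}^*(\widetilde{X}, Y))$ intertwine the $A$-action coming from $X$ with the one coming from $P$ via $A = \End_k(P)$; this follows from how the previous lemma produces $\widetilde{X}$ by splitting the idempotents in $A$ acting on $X$, but it is the place where one has to be careful with signs and with the distinction between left and right actions.
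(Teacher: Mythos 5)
Your proof is correct and follows essentially the same approach as the paper: you write $A \cong \End_k(P)$, decompose $X \cong P \otimes_k \widetilde{X}$ and $V \cong \widetilde{V} \otimes_k P^*$ with $\widetilde{V} = V \otimes_A P$ finitely generated projective over $k$, propose $\widetilde{V} \otimes_k \widetilde{X}$ as the candidate, and verify the universal property via the Morita hom-tensor adjunction. Your version is a bit more explicit about the identification $\widetilde{V} = V \otimes_A P$ and about tracking graded versus degree-zero Hom modules, but the substance is identical to the paper's argument.
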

\begin{proof}
Write $A \cong \End_k(P)$ and $P^* = \Hom_k(P,k)$ so that $V \cong \widetilde{V} \otimes_k P^*$ as right $A$-modules for some $\mathbb{Z}_2$-graded $k$-module $\widetilde{V}$. Since $V$ is finitely generated projective over $k$, so is $\widetilde{V}$. By the previous lemma, $X \cong P \otimes_k \widetilde{X}$ for some object $\widetilde{X}$, so we might expect that
\[
V \otimes_A X \cong (\widetilde{V} \otimes_k P^*) \otimes_A (P \otimes_k \widetilde{X}) \cong \widetilde{V} \otimes_k \widetilde{X}
\]
Working backwards: $\widetilde{V} \otimes_k \widetilde{X}$ exists and has the right universal property since
\begin{align*}
\cat{C}(\widetilde{V} \otimes_k \widetilde{X}, Y) &\cong \Hom_k( V \otimes_A P, \cat{C}^*(\widetilde{X}, Y))\\
&\cong \Hom_A( V, P^* \otimes_k \cat{C}^*(\widetilde{X}, Y))\\
&\cong \Hom_A( V, \cat{C}^*( X, Y ) )
\end{align*}
as claimed.
\end{proof}

From now on $A,B,C$ are Morita trivial $k$-algebras. Recall from Definition \ref{defn:algebra_modules} that $\cat{C}_A$ denotes the category of left $A$-modules in $\cat{C}$.

\begin{definition}\label{defn:psimodule} Given a left $A$-module $X$ in $\cat{C}$, the object $\Psi X$ is a left $A$-module, where $a \in A_0$ acts by $\Psi(a)$ and $a \in A_1$ acts by
\[
\xymatrix{
\Psi (X) \ar[r]^-{\Psi(a)} & \Psi \Psi (X) \ar[r]^-{-1} & \Psi \Psi (X)
}\,.
\]
This determines a functor $\Psi: \cat{C}_A \lto \cat{C}_A$.
\end{definition}

\begin{lemma} $(\cat{C}_A, \Psi)$ is a supercategory.
\end{lemma}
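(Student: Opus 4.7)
The plan is to verify four things in sequence: (i) for each $A$-module $X$ in $\cat{C}$, the prescription gives a well-defined morphism $\mu': A \lto \cat{C}^*(\Psi X, \Psi X)$ of $\mathbb{Z}_2$-graded algebras; (ii) $\Psi$ is functorial on $\cat{C}_A$, i.e.\ $\Psi(\phi)$ is $A$-linear whenever $\phi$ is; (iii) each component $\xi_X: \Psi^2 X \lto X$ from $\cat{C}$ is $A$-linear with respect to the twice-shifted action, and hence assembles into a natural isomorphism $\xi: \Psi^2 \lto 1_{\cat{C}_A}$; (iv) the coherence $\xi * 1_\Psi = 1_\Psi * \xi$ holds in $\cat{C}_A$. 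The single identity doing the work in every step is the supercategory coherence of $\cat{C}$, which at each object $X$ gives $\Psi(\xi_X) = \xi_{\Psi X}$ as morphisms $\Psi^3 X \lto \Psi X$.

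For (i), writing $\mu: A \lto \cat{C}^*(X,X)$ for the original action, the shifted action is $\mu'(a) = \Psi(\mu(a))$ for $a \in A_0$ and $\mu'(a) = -\Psi(\mu(a))$ for $a \in A_1$. Multiplicativity $\mu'(ab) = \mu'(a) \cdot \mu'(b)$ is automatic except when both $a,b \in A_1$. In that case the composition formula $\varphi \cdot \psi = \xi \Psi(\varphi) \psi$ for odd-odd pairs gives
\[
\mu'(a) \cdot \mu'(b) \;=\; \xi_{\Psi X}\, \Psi\bigl(-\Psi(\mu(a))\bigr)\bigl(-\Psi(\mu(b))\bigr) \;=\; \xi_{\Psi X}\, \Psi^2(\mu(a))\, \Psi(\mu(b)),
\]
whereas on the other side $\mu'(ab) = \Psi(\mu(ab)) = \Psi\bigl(\xi_X\, \Psi(\mu(a))\, \mu(b)\bigr) = \Psi(\xi_X)\, \Psi^2(\mu(a))\, \Psi(\mu(b))$, and these agree precisely by $\xi_{\Psi X} = \Psi(\xi_X)$. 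Step (ii) is then routine: applying $\Psi$ to the $A$-linearity equations for a morphism $\phi: X \lto Y$ yields the corresponding equations for $\Psi(\phi)$, the sign on odd $a$ cancelling from both sides.

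For (iii), I would first observe that iterating the definition once more puts on $\Psi^2 X$ the action $a \mapsto \Psi^2(\mu(a))$, since the two sign flips for odd $a$ cancel. Thus $A$-linearity of $\xi_X$ reduces on even $a$ to the ordinary naturality of $\xi$, and on odd $a$ to the equation $\Psi(\xi_X)\, \Psi^2(\mu(a)) = \mu(a)\, \xi_X$. But naturality of $\xi$ at the odd morphism $\mu(a): X \lto \Psi X$ gives $\xi_{\Psi X}\, \Psi^2(\mu(a)) = \mu(a)\, \xi_X$, so once again the required equality is exactly $\Psi(\xi_X) = \xi_{\Psi X}$. Step (iv) is immediate, since the components of $\xi$ in $\cat{C}_A$ are the same underlying morphisms as in $\cat{C}$. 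The main obstacle throughout is purely bookkeeping --- tracking the signs together with the different instances of $\xi$ at $X$, $\Psi X$, $\Psi^2 X$ --- but once the coherence $\Psi(\xi_X) = \xi_{\Psi X}$ is slotted in at the two critical points in (i) and (iii), everything reduces to routine naturality.
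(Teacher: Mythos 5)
Your proof is correct and follows the same route as the paper, which disposes of the lemma in a single sentence by taking the natural isomorphism $\xi$ on $\cat{C}_A$ to be the one inherited from $\cat{C}$. You supply the verification the paper leaves implicit, and you correctly isolate the one nontrivial input --- the coherence $\Psi(\xi_X) = \xi_{\Psi X}$ --- as exactly what makes the shifted action multiplicative in the odd-odd case and what makes $\xi_X$ an $A$-module map for odd $a$; the remaining checks (the even-odd and odd-even cases of multiplicativity, functoriality of $\Psi$, and the coherence axiom itself) are, as you say, sign bookkeeping. One small caveat: describing multiplicativity outside the odd-odd case as ``automatic'' elides a couple of $\Psi$-and-sign cancellations, but they do all go through, so the argument stands.
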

\begin{proof}
The natural isomorphism $\xi_X: \Psi \Psi (X) \lto X$ is given by the identity in $\cat{C}$.
\end{proof}

Let $V$ be a $B$-$A$-bimodule which is finitely generated and projective over $k$. If $X$ is a left $A$-module in $\cat{C}$ then $V \otimes_A X$ is made into a left $B$-module in $\cat{C}$ uniquely such that \eqref{eq:isodefatensor} is a natural isomorphism of $B$-modules. Hence the tensor product gives a functor
\[
\Phi_V = V \otimes_A (-): \cat{C}_A \lto \cat{C}_B\,.
\]
Given a $B$-$A$-bimodule $V$ and an object $X$ for which $V \otimes_A X$ exists, there are natural isomorphisms of $B$-modules
\[
\Psi V \otimes_A X \cong \Psi(V \otimes_A X) \cong V \otimes_A \Psi X
\]
induced from the isomorphisms
\[
\Hom_A( \Psi V, \cat{C}^*(X, -)) \cong \Hom_A( V, \Psi \cat{C}^*(X, -) ) \cong \Hom_A( V, \cat{C}^*(\Psi X, -))\,.
\]
It follows that $\Phi_V$ is a superfunctor.

\begin{lemma}\label{lemma:assoctensor} Let $W$ be a $C$-$B$-bimodule and $V$ a $B$-$A$-bimodule in $\cat{C}$. Then for any left $A$-module $X$ in $\cat{C}$ there is a natural isomorphism of $C$-modules
\[
(W \otimes_B V) \otimes_A X \cong W \otimes_B ( V \otimes_A X )\,.
\]
\end{lemma}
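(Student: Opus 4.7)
The strategy is Yoneda over $\cat{C}_C$, the category of left $C$-modules in $\cat{C}$: I would show that both sides represent the same functor of $Y \in \cat{C}_C$, and then conclude by the uniqueness clause of the universal property in \eqref{eq:isodefatensor} that the two representing objects are naturally isomorphic as $C$-modules.

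The first step is to verify that both tensor products exist. Since $V$ and $W$ are finitely generated projective over $k$ and $B$ is Morita trivial, $W \otimes_B V$ is a $C$-$A$-bimodule which is a direct summand of the $k$-module $W \otimes_k V$ and hence still finitely generated projective over $k$. The preceding lemma then supplies $(W \otimes_B V) \otimes_A X$ as a $C$-module in $\cat{C}$. Similarly $V \otimes_A X$ exists as a $B$-module, and $W \otimes_B (V \otimes_A X)$ as a $C$-module.

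Next, for any $C$-module $Y$ in $\cat{C}$, I would assemble the following chain of natural isomorphisms of $k$-modules:
\begin{align*}
\cat{C}_C\bigl((W \otimes_B V) \otimes_A X,\, Y\bigr)
&\cong \Hom^0_A\bigl(W \otimes_B V,\, \cat{C}^*(X, Y)\bigr)_C \\
&\cong \Hom^0_B\bigl(W,\, \Hom^0_A(V,\, \cat{C}^*(X, Y))\bigr)_C \\
&\cong \Hom^0_B\bigl(W,\, \cat{C}^*(V \otimes_A X,\, Y)\bigr)_C \\
&\cong \cat{C}_C\bigl(W \otimes_B (V \otimes_A X),\, Y\bigr),
\end{align*}
where the subscript $(-)_C$ denotes passage to the $C$-linear morphisms. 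The outer two isomorphisms are the defining isomorphism $\rho$ of \eqref{eq:isodefatensor}, using that $\cat{C}^*(X, Y)$ is a $C$-$A$-bimodule (left $C$-action from $Y$, right $A$-action from $X$) and $\cat{C}^*(V \otimes_A X, Y)$ is a $C$-$B$-bimodule. The middle step is the classical tensor-hom adjunction for bimodules over $k$-algebras. The third step is \eqref{eq:isodefatensor} again, now for $V \otimes_A X$, applied inside the outer $\Hom_B^0(W, -)$.

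The main obstacle is bookkeeping with bimodule structures: specifically, verifying that the universal iso $\rho$, stated as natural in $B$-modules, is in fact natural in $C$-$B$-bimodules $Y$ (and similarly for $A$-$C$-bimodule versions). This is immediate from the way the $B$-module structure on $V \otimes_A X$ is \emph{defined} — namely, as the unique structure making $\rho$ natural in all compatible structure — so the $C$-action on $Y$ threads through each step of the chain without requiring new argument. Once the chain is established, naturality in $Y$ is automatic from the naturality of each constituent iso, Yoneda provides the desired isomorphism in $\cat{C}_C$, and naturality in $W$, $V$, $X$ follows similarly by tracking each step.
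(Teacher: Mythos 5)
Your approach is essentially the same as the paper's: both sides are shown to represent the same functor via the defining universal property of the tensor product together with the classical tensor--hom adjunction for bimodules, and Yoneda closes the argument. The paper's proof is exactly your chain, run as an isomorphism of functors $\Hom_B(W,\cat{C}^*(V\otimes_A X,-))\cong\Hom_B(W,\Hom_A^*(V,\cat{C}^*(X,-)))\cong\Hom_A(W\otimes_B V,\cat{C}^*(X,-))$, with the $C$-linearity left implicit rather than tracked via $(-)_C$ as you do. One small correction: in the middle of your chain the inner Hom must be the \emph{graded} one, $\Hom^*_A(V,\cat{C}^*(X,Y))$, not $\Hom^0_A$; the tensor--hom adjunction sends a degree-zero $A$-linear map $W\otimes_B V\to\cat{C}^*(X,Y)$ to a degree-zero $B$-linear map $W\to\Hom^*_A(V,\cat{C}^*(X,Y))$, whose values need not lie in degree zero when evaluated on odd elements of $W$. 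With that fixed the argument matches the paper's.
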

\begin{proof}
This follows from the isomorphism
\begin{align*}
\Hom_A(W, \cat{C}^*(V \otimes_A X, -)) &\cong \Hom_A( W, \Hom_A^*(V, \cat{C}^*(X, -)))\\
&\cong \Hom_A(W \otimes_A V, \cat{C}^*(X, -))\,.
\end{align*}
\end{proof}

Now we specialise to the Clifford algebras $C_n$ and their modules.

\begin{lemma}\label{lemma:simplehom} Let $X$ be a left $C_n$-module in $\cat{C}$. There is a $k$-linear isomorphism
\[
\Hom_{C_n}^0( S_n, X ) \cong \big\{ x \in X^0 \l \ferm_i \cdot x = 0 \text{ for all } 1 \le i \le n \big\}
\]
defined by $f \mapsto f(1)$. If $X$ is a right $C_n$-module there is a $k$-linear isomorphism
\[
\Hom^0_{C_n}( (S_n)^*, X ) \cong \big\{ x \in X^0 \l x \cdot \fermc_i = 0 \text{ for all } 1 \le i \le n \big\}
\]
defined by $f \mapsto f(1^*)$.
\end{lemma}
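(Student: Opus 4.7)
My plan is to reduce everything to the single algebraic fact that $S_n$ is cyclic as a left $C_n$-module, generated by $1 \in S_n^0$, with presentation
\[
S_n \;\cong\; C_n \big/ \textstyle\sum_{i=1}^n C_n \ferm_i\,.
\]
First I would establish this presentation by a standard PBW-type argument: the Clifford relations \eqref{eq:clifford_relations} let one normal-order any element of $C_n$ as a $k$-linear combination of monomials $\fermc_I \ferm_J$ with $I, J \subseteq \{1,\ldots,n\}$ in increasing order. The left ideal $\sum_i C_n \ferm_i$ is then spanned precisely by those monomials with $J \neq \emptyset$, and the surviving classes $\{\fermc_I\}$ map under $c \mapsto c \cdot 1$ bijectively onto the exterior basis $\{\theta_I\}$ of $S_n$.

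With this in hand, the first statement follows immediately. The forward map $f \mapsto f(1)$ is well-defined, because $f$ has degree zero so $f(1) \in X^0$, and $\ferm_i \cdot f(1) = f(\ferm_i \cdot 1) = f(\theta_i^* \lrcorner 1) = 0$. For the inverse I would send $x \in X^0$ with $\ferm_i \cdot x = 0$ for all $i$ to the $C_n$-linear map $\tilde x : S_n \lto X$ obtained by factoring the map $C_n \lto X$, $c \mapsto c \cdot x$, through the presentation above. Both constructions are patently $k$-linear and mutually inverse: $\widetilde{f(1)}$ and $f$ agree on the generator $1$, hence on all of $S_n$ by $C_n$-linearity, and $\tilde x(1) = x$ by construction.

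For the right-module version I would first verify directly on exterior-algebra basis monomials that $(1^* \cdot \fermc_i)(s) = 1^*(\theta_i \wedge s) = 0$ for every $s$, so that $1^* \cdot \fermc_i = 0$, and then derive the dual presentation
\[
S_n^* \;\cong\; C_n \big/ \textstyle\sum_{i=1}^n \fermc_i C_n
\]
from the same PBW argument applied on the right (the complementary right ideal is spanned by monomials $\fermc_I \ferm_J$ with $I \neq \emptyset$, and the residues $\{\ferm_J\}$ are the dual basis to $\{\theta_J\}$ under $c \mapsto 1^* \cdot c$, up to signs). The rest of the proof is then the argument of the previous paragraph verbatim, with left and right (and $\ferm \leftrightarrow \fermc$) interchanged. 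The only step requiring any care is the PBW normal-ordering for $C_n$, and even that is entirely standard.
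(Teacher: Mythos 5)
The paper states Lemma \ref{lemma:simplehom} without giving a proof, evidently regarding it as elementary. Your argument is correct and is the natural one: the cyclic presentation $S_n \cong C_n / \sum_{i=1}^n C_n \ferm_i$ obtained by normal-ordering $C_n$ into monomials $\fermc_I \ferm_J$, together with its transpose $S_n^* \cong C_n / \sum_{i=1}^n \fermc_i C_n$, is exactly what the bijection $f \leftrightarrow f(1)$ (resp. $f \leftrightarrow f(1^*)$) requires, and once those presentations are in place the rest is formal.
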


\begin{lemma}\label{lemma:morphism_out_1} For an $C_n$-module $Y$ there is a $k$-linear isomorphism
\[
\cat{C}_{C_n}( S_n \otimes_k X, Y ) \cong \{ \delta \in \cat{C}(X,Y) \l \ferm_i \circ \delta = 0 \text{ for all } 1 \le i \le n \}
\]
defined by sending $\alpha: S_n \otimes_k X \lto Y$ to the composite
\[
\xymatrix@C+2pc{
X \cong k \otimes_k X \ar[r]^-{\iota \otimes 1} & S_n \otimes_k X \ar[r]^-{\alpha} & Y
}
\]
where $\iota: k \lto S_n$ is defined by $\iota(1) = 1$.
\end{lemma}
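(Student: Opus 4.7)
The plan is to obtain the isomorphism by composing two more basic isomorphisms already available in the paper: the defining universal property of the tensor product in $\cat{C}$, and the description of $\Hom^0_{C_n}(S_n, -)$ from Lemma \ref{lemma:simplehom}.

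First, by the defining property of $S_n \otimes_k X$ (the isomorphism $\rho_Y$ of the tensor product definition), there is a natural $k$-linear isomorphism
\[
\rho_Y: \cat{C}(S_n \otimes_k X, Y) \xrightarrow{\;\cong\;} \Hom^0_k\bigl(S_n, \cat{C}^*(X,Y)\bigr).
\]
The target $\cat{C}^*(X,Y)$ carries a left $C_n$-action inherited from the $C_n$-action on $Y$ by post-composition. I would then observe that $\rho_Y$ restricts to an isomorphism on the subspaces of $C_n$-equivariant maps on each side: on the left, $\cat{C}_{C_n}(S_n \otimes_k X, Y)$ consists of morphisms in $\cat{C}$ commuting with the $C_n$-action on $S_n \otimes_k X$ (acting on the first factor) and on $Y$; on the right, $\Hom^0_{C_n}(S_n, \cat{C}^*(X,Y))$ is the subspace of $C_n$-linear maps. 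That these correspond under $\rho_Y$ is a direct unpacking of the action of $C_n$ on $S_n \otimes_k X$ as induced by its action on $S_n$, together with the naturality of $\rho$.

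Next I would apply Lemma \ref{lemma:simplehom} to the left $C_n$-module $M = \cat{C}^*(X,Y)$, which yields
\[
\Hom^0_{C_n}\bigl(S_n, \cat{C}^*(X,Y)\bigr) \xrightarrow{\;\cong\;} \bigl\{\,\delta \in \cat{C}^*(X,Y)^0 \,\bigl|\bigr.\, \ferm_i \cdot \delta = 0 \text{ for all } i\,\bigr\}, \qquad f \mapsto f(1).
\]
Here the degree-zero part $\cat{C}^*(X,Y)^0$ is precisely $\cat{C}(X,Y)$, and the action of $\ferm_i$ on $\delta \in \cat{C}(X,Y)$ is post-composition, i.e.\ $\ferm_i \cdot \delta = \ferm_i \circ \delta$. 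Composing this isomorphism with the restriction of $\rho_Y$ yields exactly the claimed isomorphism.

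Finally, I would check that the composite isomorphism agrees with the explicit formula in the statement. Given $\alpha: S_n \otimes_k X \lto Y$, the map $\rho_Y(\alpha)(1) \in \cat{C}(X,Y)$ is by definition of $\rho$ the mate of $\alpha$ at the basis element $1 \in S_n$, which is precisely the composite $\alpha \circ (\iota \otimes 1_X)$ with $\iota: k \lto S_n$, $\iota(1) = 1$. The only mildly non-trivial step is verifying that $\rho_Y$ restricts correctly to $C_n$-equivariant maps, but this is immediate from the fact that $S_n \otimes_k X$ is by construction the free $C_n$-module on $X$ in $\cat{C}$ when $X$ is trivially acted on — or equivalently, that $\rho$ is natural in the $C_n$-module $S_n$. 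No serious obstacle arises; the content of the lemma is just the explicit form of the composite.
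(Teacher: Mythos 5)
Your proposal is correct and follows essentially the same route as the paper: the paper's proof simply cites Lemma \ref{lemma:simplehom} together with the isomorphism $\cat{C}_{C_n}(S_n \otimes_k X, Y) \cong \Hom_{C_n}(S_n, \cat{C}^*(X,Y))$, which is exactly the restriction of $\rho_Y$ to $C_n$-equivariant maps that you spell out. You add useful detail (deriving that restriction from naturality of $\rho$, and verifying the explicit formula $\rho_Y(\alpha)(1) = \alpha \circ (\iota \otimes 1_X)$ via naturality in the $V$-slot), but there is no divergence in method.
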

\begin{proof}
This follows from Lemma \ref{lemma:simplehom} and the isomorphism
\[
\cat{C}_{C_n}(S_n \otimes_k X, Y ) \cong \Hom_{C_n}( S_n, \cat{C}^*(X,Y) )\,.
\]
\end{proof}

More generally

\begin{lemma}\label{lemma:morphisms_two_forms} Let $X \in C_{C_p}$ and $Y \in \cat{C}_{C_q}$. There is a $k$-linear isomorphism
\begin{align*}
\cat{C}_{C_q}( S_{q,p} \otimes_{C_p} X, Y ) \cong \{ \delta: X \lto Y \l \delta \circ \fermc_i &= 0 \text{ for } 1 \le i \le p \text{ and }\\ \ferm_i \circ \delta &= 0 \text{ for } 1 \le i \le q \}
\end{align*}
defined by sending $\alpha: S_{q,p} \otimes_{C_p} X \lto Y$ to the composite
\[
\xymatrix@C+2pc{
X \cong C_p \otimes_{C_p} X \ar[r]^-{\iota \otimes 1} & S_{q,p} \otimes_{C_p} X \ar[r]^-{\alpha} & Y
}
\]
where $\iota: C_p \lto S_{q,p}$ is defined by $\iota(1) = 1 \otimes 1^*$.
\end{lemma}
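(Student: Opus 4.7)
The plan is to chain together three known bijections and verify that the resulting composite matches the explicit formula in the statement. First I would apply the associativity of the tensor product (Lemma \ref{lemma:assoctensor}) to rewrite
\[
S_{q,p} \otimes_{C_p} X = (S_q \otimes_k S_p^*) \otimes_{C_p} X \cong S_q \otimes_k (S_p^* \otimes_{C_p} X)\,.
\]
Then Lemma \ref{lemma:morphism_out_1}, applied with $n = q$ and the role of ``$X$'' played by $S_p^* \otimes_{C_p} X$, yields a bijection between $\cat{C}_{C_q}(S_{q,p} \otimes_{C_p} X, Y)$ and the set of morphisms $\delta': S_p^* \otimes_{C_p} X \lto Y$ satisfying $\ferm_i \circ \delta' = 0$ for $1 \le i \le q$, implemented by $\alpha \mapsto \alpha \circ (\iota_1 \otimes 1)$, where $\iota_1: k \lto S_q$ sends $1 \mapsto 1$.

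The next step is to absorb the $S_p^*$. By the defining adjunction \eqref{eq:isodefatensor} of $\otimes_{C_p}$, such morphisms $\delta'$ correspond to $C_p$-linear maps $\Phi: S_p^* \lto \cat{C}^*(X, Y)$, with $\cat{C}^*(X, Y)$ carrying the right $C_p$-action induced from the left $C_p$-action on $X$ by precomposition. The right-module version of Lemma \ref{lemma:simplehom} then identifies $\Hom^0_{C_p}(S_p^*, \cat{C}^*(X, Y))$ with $\{f \in \cat{C}^0(X, Y) \mid f \circ \fermc_i = 0, 1 \le i \le p\}$ via $\Phi \mapsto \Phi(1^*)$. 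Under this chain, the condition $\ferm_i \circ \delta' = 0$ translates to $\ferm_i \circ f = 0$ because $\Phi$ is determined by $\Phi(1^*) = f$ and the $C_q$-action on $Y$ commutes with evaluation at $1^*$.

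Finally I would verify that tracking an $\alpha$ through the chain recovers the formula in the statement: after the first step $\delta'(s \otimes x) = \alpha((1 \otimes s) \otimes x)$, after the second $\Phi(s)(x) = \delta'(s \otimes x)$, and so $f(x) = \Phi(1^*)(x) = \alpha((1 \otimes 1^*) \otimes x)$, which is exactly the image of $1 \otimes x \in C_p \otimes_{C_p} X$ under $\iota \otimes 1$ followed by $\alpha$. The main obstacle is not any single hard step but careful bookkeeping with the left and right $C_p$-module structures, and checking that each bijection in the chain is given by the formula claimed so that the composites can be compared cleanly.
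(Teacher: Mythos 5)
Your proof is correct and takes essentially the same route as the paper's: both chain the defining adjunction for $\otimes_{C_p}$ with the tensor--Hom manipulations packaged in Lemma~\ref{lemma:simplehom} and Lemma~\ref{lemma:morphism_out_1}. The only cosmetic difference is that you first regroup via Lemma~\ref{lemma:assoctensor} and strip off the $S_q$ factor with Lemma~\ref{lemma:morphism_out_1} before handling $S_p^*$ by the right-module version of Lemma~\ref{lemma:simplehom}, while the paper passes directly through the $C_p$-$C_q$-bimodule Hom and then applies Lemma~\ref{lemma:simplehom} in both variables; the resulting bijection and the explicit formula $\alpha \mapsto \alpha \circ (\iota \otimes 1)$ agree.
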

\begin{proof}
Using the previous lemma and Lemma \ref{lemma:simplehom},
\begin{align*}
\cat{C}_{C_q}( S_{q,p} \otimes_{C_p} X, Y ) &\cong \Hom^0_{C_p-C_q-\operatorname{bimod}}( S_{q} \otimes_k S_p^* , \cat{C}^*(X,Y))\\
&\cong \Hom^0_{C_q}( S_q, \Hom^*_{C_p}( S_p^*, \cat{C}^*(X,Y) ))\\
&\cong \{ \delta \in \Hom^0_{C_q}( S_p^*, \cat{C}^*(X,Y) ) \l \ferm_i \circ \delta = 0 \text{ for } 1 \le i \le q \}\\
&\cong \{ \delta: X \lto Y \l \delta \circ \fermc_i = 0 \text{ for } 1 \le i \le p \text{ and } \ferm_i \circ \delta = 0 \text{ for } 1 \le i \le q \}
\end{align*}
as claimed.
\end{proof}

\section{Constructing superbicategories}\label{section:constructing_superbicategories}

This appendix collects the data needed for constructing a superbicategory $\cat{B}$ from a collection of supercategories $\cat{B}(a,b)$. Given a bicategory $\cat{B}$ and for each pair of objects $a,b$ the structure of a supercategory on $\cat{B}(a,b)$, we denote composition by
\[
T: \cat{B}(b, c) \otimes_k \cat{B}(a,b) \lto \cat{B}(a,c)\,.
\]
Since our main example is constructing the superbicategory associated to the cut operation, we write $T(Y,X)$ as $Y \l X$. We denote the associator by $\alpha$, the units in $\cat{B}$ by $\Delta_a: a \lto a$, and the unitors for $X: a \lto b$ by $\lambda_X: \Delta_b \l X \lto X$ and $\rho_X: X \l \Delta_a \lto X$.

Suppose that for all objects $a,b,c$ we are given natural isomorphisms
\begin{align*}
T \circ ( \Psi \otimes 1 ) \lto \Psi \circ T, \qquad T \circ ( 1 \otimes \Psi ) \lto \Psi \circ T
\end{align*}
both of which will be denoted $\tau$. We define for an object $a$ the $1$-morphism $\Psi_a = \Psi( \Delta_a )$ and the $2$-isomorphism $\xi_a$ to be the following composite (using the structure maps of $\cat{B}$ and isomorphisms $\tau$)
\[
\xi_a: \Psi_a \l \Psi_a = \Psi( \Delta_a ) \l \Psi( \Delta_a ) \cong \Psi( \Delta_a \l \Psi(\Delta_a ) ) \cong \Psi^2( \Delta_a \l \Delta_a ) \cong \Delta_a \l \Delta_a \cong \Delta_a\,.
\]
Similarly given a $1$-morphism $X: a \lto b$ we have a $2$-isomorphism $\gamma_X$
\[
\gamma_X: X \l \Psi_a = X \l \Psi( \Delta_a ) \cong \Psi( X \l \Delta_a ) \cong \Psi(X) \cong \Psi( \Delta_b \l X ) \cong \Psi( \Lambda_b ) \l X = \Psi_b \l X\,.
\]
Given a $1$-morphism $X: a \lto b$ in $\cat{B}$ there are diagrams
\begin{gather}
\xymatrix@R+1pc@C+1pc{
\Delta_b \l \Psi (X) \ar[dr]_-{\lambda_{\Psi a}} \ar[rr]^-{\tau} & & \Psi( \Delta_b \l X ) \ar[dl]^-{\Psi( \lambda )}\\
& \Psi(X)
} \label{eq:psiunitor1}\\
\xymatrix@R+1pc@C+1pc{
\Psi(X) \l \Delta_a \ar[rr]^-{\tau}\ar[dr]_-{\rho} & & \Psi( X \l \Delta_a ) \ar[dl]^-{\Psi(\rho)}\\
& \Psi(X)
} \label{eq:psiunitor2}
\end{gather}
whose commutativity expresses compatibility of the functors $\Psi$ with the units, while for a composable triple $X,Y,Z$ the commutativity of the diagrams
\begin{gather}
\xymatrix{
(Z \l Y) \l \Psi(X) \ar[dd]_-{\tau} \ar[rr]^-{\alpha} & & Z \l ( Y \l \Psi(X)) \ar[d]^-{\tau}\\
 & & Z \l \Psi( Y \l X ) \ar[d]^-{\tau}\\
\Psi( (Z \l Y) \l X ) \ar[rr]_-{\alpha} & & \Psi( Z \l ( Y \l X ) )
} \label{eq:psithirdlastdia}\\
\xymatrix{
(Z \l \Psi(Y)) \l X \ar[d]_-{\tau} \ar[rr]^-{\alpha} & & Z \l ( \Psi(Y) \l X) \ar[d]^-{\tau}\\
\Psi( Z \l Y ) \l X \ar[d]_-{\tau} & & Z \l \Psi( Y \l X ) \ar[d]^-{\tau}\\
\Psi( (Z \l Y) \l X ) \ar[rr]_-{\alpha} & & \Psi( Z \l ( Y \l X ) )
} \label{eq:psimiddledia}\\
\xymatrix{
(\Psi(Z) \l Y) \l X \ar[d]_-{\tau} \ar[rr]^-{\alpha} & & \Psi(Z) \l ( Y \l X ) \ar[dd]^-{\tau}\\
\Psi(Z \l Y) \l X \ar[d]_-{\tau} & & \\
\Psi( (Z \l Y) \l X ) \ar[rr]_-{\alpha} & & \Psi( Z \l ( Y \l X ) )
}\label{eq:psilastdia}
\end{gather}
expresses compatibility of $\Psi$ with the associator of $\cat{B}$. Finally, commutativity of
\begin{equation}\label{eq:psicomp1}
\xymatrix@R+1pc@C+1pc{
T( 1 \otimes \Psi^2 ) \ar[r]^-{\tau}\ar[d]_-{T(1 \otimes \xi)} & \Psi T( 1 \otimes \Psi ) \ar[r]^-{\tau} & \Psi^2 T \ar[d]^-{\xi * 1_T}\\
T \ar[rr]_-{1_T} & & T
}
\end{equation}
\begin{equation}\label{eq:psicomp2}
\xymatrix@R+1pc@C+1pc{
T( \Psi^2 \otimes 1 ) \ar[r]^-{\tau} \ar[d]_-{T (\xi \otimes 1)}  & \Psi T( \Psi \otimes 1 ) \ar[r]^-{\tau} & \Psi^2 T \ar[d]^-{\xi * 1_T}\\
T \ar[rr]_-{1_T} & & T
}
\end{equation}
\begin{equation}\label{eq:psicomp3}
\xymatrix@R+1pc@C+1pc{
T( \Psi \otimes \Psi ) \ar[r]^-{\tau}\ar[d]_-{-\tau} & \Psi T( 1 \otimes \Psi ) \ar[d]^-{\tau}\\
\Psi T( \Psi \otimes 1 ) \ar[r]_-{\tau} & \Psi^2 T
}
\end{equation}
expresses that $T$ is a superfunctor.

\begin{lemma}\label{lemma:constructingsuper}
Given a bicategory $\cat{B}$ with the structure of a supercategory on $\cat{B}(a,b)$ for all pairs $a,b$, and natural isomorphisms $\tau$, suppose that \eqref{eq:psiunitor1}-\eqref{eq:psicomp3} commute. Then with $\Psi, \xi$ and $\gamma$ defined as above, $\cat{B}$ is a superbicategory.
\end{lemma}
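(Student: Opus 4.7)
The plan is to verify the three superbicategory axioms — compatibility of $\gamma$ with horizontal composition of $1$-morphisms, the identity $\xi_a * 1_{\Psi_a} = 1_{\Psi_a} * \xi_a$, and the identity $1_X * \xi_a = (\xi_b * 1_X)(1_\Psi * \gamma_X)(\gamma_X * 1_\Psi)$ — by unfolding each axiom through the definitions of $\Psi_a = \Psi(\Delta_a)$, $\xi_a$, and $\gamma_X$, and then reducing it to the hypotheses: the pentagon/triangle axioms of $\cat{B}$, the supercategory axioms on each $\cat{B}(a,b)$, and the coherence diagrams \eqref{eq:psiunitor1}--\eqref{eq:psicomp3}.

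I would begin with the axiom concerning $\gamma$ under horizontal composition. For composable $X: a \lto b$ and $Y: b \lto c$ both $\gamma_X$, $\gamma_Y$ and $\gamma_{YX}$ are defined as composites built from $\tau$, unitors and associators, so the pentagon-like diagram comparing $\gamma_{YX}$ with the composite going through $\gamma_X$ and $\gamma_Y$ expands into a large diagram whose cells are filled by: (i) naturality of $\tau$; (ii) the three compatibilities of $\tau$ with $\alpha$ expressed by \eqref{eq:psithirdlastdia}--\eqref{eq:psilastdia}; (iii) the unitor compatibilities \eqref{eq:psiunitor1}--\eqref{eq:psiunitor2}; and (iv) the pentagon and triangle axioms of $\cat{B}$. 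Next, for $\xi_a * 1_{\Psi_a} = 1_{\Psi_a} * \xi_a$, I unfold $\xi_a$ as a composite $\Psi(\Delta_a) \l \Psi(\Delta_a) \to \Psi^2(\Delta_a \l \Delta_a) \to \Delta_a$. Both horizontal composites are $2$-cells $\Psi_a^3 \lto \Psi_a$; after using $\tau$ to collect all $\Psi$'s on one side of the composition functor $T$ via \eqref{eq:psicomp3}, the equality reduces to the internal supercategory axiom $\xi * 1_{\Psi} = 1_{\Psi} * \xi$ on $\cat{B}(a,a)$ applied to $\Delta_a \l \Delta_a \cong \Delta_a$. Finally, for the third axiom, the right-hand side $(\xi_b * 1_X)(1_\Psi * \gamma_X)(\gamma_X * 1_\Psi)$ transports the two instances of $\Psi$ one at a time from the right of $X$ to the left before combining them by $\xi_b$, while the left-hand side $1_X * \xi_a$ combines them by $\xi_a$ first; after unfolding the $\gamma_X$'s in terms of $\tau$ and unitors, the equality reduces to \eqref{eq:psicomp1}, \eqref{eq:psicomp2} (which assert precisely that $T$ behaves as a superfunctor in each slot), together with the unitor compatibility \eqref{eq:psiunitor1}--\eqref{eq:psiunitor2} used to move $\Delta_a, \Delta_b$ past $X$.

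The main obstacle is the first axiom: each $\gamma$ is already a four-step composite, so the pentagon for $\gamma_{YX}$ expands into a diagram with many cells, and the only nontrivial content of the proof is the bookkeeping needed to tile this diagram with copies of the hypothesised coherence conditions. Conceptually, the $\tau$'s act as a ``braiding'' of $\Psi$ past the composition $T$, and the assumed coherence diagrams \eqref{eq:psiunitor1}--\eqref{eq:psicomp3} are designed to say exactly that this braiding is compatible with the bicategorical structure of $\cat{B}$, so no further hypothesis is needed; once these are in hand the three axioms follow by mechanical diagram chases and we omit the explicit verification.
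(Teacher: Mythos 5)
The paper itself offers no proof of this lemma, stating only that it is ``an easy but somewhat lengthy exercise, which we omit.'' Your sketch correctly identifies the intended strategy --- unfold the three superbicategory axioms through the definitions of $\Psi_a$, $\xi_a$, $\gamma_X$ and tile the resulting diagrams with the assumed coherences \eqref{eq:psiunitor1}--\eqref{eq:psicomp3}, naturality of $\tau$, and bicategory coherence --- and your assignment of hypotheses to axioms is plausible; since you too ultimately defer the detailed diagram chases, your argument is at essentially the same level of rigour as the paper's while being more informative about where each hypothesis enters.
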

\begin{proof}
The proof is an easy but somewhat lengthy exercise, which we omit.
\end{proof}

\bibliographystyle{amsalpha}

\begin{thebibliography}{BHLS03}


\bibitem{barneslambe}
D.~W.~Barnes and L.~A.~Lambe, \emph{A fixed point approach to homological perturbation theory}, Proc. Amer.
  Math. Soc. \textbf{112} Number 3 (1991), 881--892.

\bibitem{benabou}
J.~B\'{e}nabou, \textsl{Introduction to bicategories}, Reports of the Midwest Category Seminar, Springer, 1967.

\bibitem{bor94}
F.~Borceux, \textsl{Handbook of categorical algebra $1$}, volume $50$ of \textsl{Encyclopedia of Mathematics and its Applications}, Cambridge University Press, Cambridge, 1994.

\bibitem{buchweitz_flenner}
R.-O.~Buchweitz and H.~Flenner, \textsl{A semiregularity map for modules and applications to deformations}, Compositio Math. \textbf{137} (2003), 135--210.
    
\bibitem{ct1007.2679}
A.~{C\u ald\u araru} and S.~Willerton, \textsl{The Mukai pairing, I: a categorical approach},
New York Journal of Mathematics \textbf{16} (2010), 61--98, 
  \href{http://arxiv.org/abs/0707.2052}{[arXiv:0707.2052]}.

\bibitem{khovhompaper}
N.~Carqueville and D.~Murfet, \textsl{Computing {K}hovanov-{R}ozansky homology and defect fusion}, Algebraic \& Geometric Topology \textbf{14} (2014), 489--537, \href{http://arxiv.org/abs/1108.1081}{[arXiv:1108.1081]}. 

\bibitem{lgdual}
N.~Carqueville and D.~Murfet, \textsl{Adjunctions and defects in {L}andau-{G}inzburg models}, Adv. Math. \textbf{289} (2016), 480--566, \href{http://arxiv.org/abs/1208.1481}{[arXiv:1208.1481]}.

\bibitem{lgdual_survey}
N.~Carqueville and D.~Murfet, \textsl{A toolkit for defect computations in Landau-Ginzburg models}, \href{http://arxiv.org/abs/1303.1389}{[arXiv:1303.1389]}, contribution to the String-Math 2012 proceedings.

\bibitem{ade}
N.~Carqueville, A.~Ros Camacho and I.~Runkel, \textsl{Orbifold equivalent potentials}, Journal of Pure and Applied Algebra 220.2 (2016), 759--781, \href{http://arxiv.org/abs/1311.3354}{[arXiv:1311.3354]}.

\bibitem{cr0909.4381}
N.~Carqueville and I.~Runkel, \textsl{On the monoidal structure of matrix bi-factorisations}, J. Phys.
  A: Math. Theor. \textbf{43} (2010), 
  \href{http://arxiv.org/abs/0909.4381}{[arXiv:0909.4381]}.
  
\bibitem{genorb}
\bysame, \textsl{Orbifold completion of defect bicategories}, Quantum Topology 7 (2016), 203--279, \href{http://arxiv.org/abs/1210.6363}{[arXiv:1210.6363]}.

\bibitem{crainic}
M.~Crainic, \emph{On the perturbation lemma, and deformations}, \href{http://arxiv.org/abs/math/0403266}{[arXiv:0403266]}

\bibitem{dkr1107.0495}
A.~Davydov, L.~Kong, and I.~Runkel, \textsl{Field theories with defects and the
  centre functor}, Mathematical Foundations of Quantum Field Theory and Perturbative String Theory, 
  Proceedings of Symposia in Pure Mathematics, AMS, 2011, \href{http://arxiv.org/abs/1107.0495}{[arXiv:1107.0495]}.
  
\bibitem{d0904.4713}
T.~Dyckerhoff, \textsl{Compact generators in categories of matrix factorizations},
  Duke Math. J. \textbf{159} (2011), 223--274,
  \href{http://arxiv.org/abs/0904.4713}{[arXiv:0904.4713]}.

\bibitem{dm1102.2957}
T.~Dyckerhoff and D.~Murfet, \textsl{Pushing forward matrix factorisations}, Duke Math. J. \textbf{162} (2013), 1249--1311, \href{http://arxiv.org/abs/1102.2957}{[arXiv:1102.2957]}.

\bibitem{eisenbud}
D.~Eisenbud, \textsl{Homological algebra on a complete intersection, with an application to group representations}, Trans. Amer. Math. Soc. \textbf{260} (1980), 35--64.

\bibitem{ellis_lauda}
A.~Ellis and A.~Lauda, \textsl{An odd categorification of $U_q(\mathfrak{sl}_2)$}, Quantum Topology, Volume 7, Issue 2 (2016), 329--433, \href{http://arxiv.org/abs/1307.7816}{[arXiv:1307.7816]}.

\bibitem{friedrich}
T.~Friedrich, \textsl{{D}irac operations in {R}iemannian geometry}, Graduate studies in mathematics \textbf{25}, AMS (1997).

\bibitem{girard_llogic}
J.-Y.~Girard, \textsl{Linear Logic}, Theoretical Computer Science 50 (1) (1987), 1--102.
  
\bibitem{gray}
J.~W.~Gray, \textsl{Formal category theory: adjointness for 2-categories}, Springer, Berlin, 1974.

\bibitem{greuel}
G.-M.~Greuel, C.~Lossen and E.~Shustin, \textsl{Introduction to singularities and deformations}, Springer (2007).

\bibitem{kang}
S.-J.~Kang, M.~Kashiwara and S.-J.~Oh, \textsl{Supercategorification of quantum Kac-Moody algebras}, Adv. Math \textbf{242} (2013) 116--162, \href{http://arxiv.org/abs/1206.5933}{[arXiv:1206.5933]}.

\bibitem{kang2}
\bysame, \textsl{Supercategorification of quantum Kac-Moody algebras II},  Adv. Math \textbf{265} (2014) 169--240, \href{http://arxiv.org/abs/1303.1916}{[arXiv:1303.1916]}.

\bibitem{k1004.2307}
A.~Kapustin, \textsl{Topological {F}ield {T}heory, {H}igher {C}ategories, and
  {T}heir {A}pplications}, Proceedings of the ICM 2010, \href{http://arxiv.org/abs/1004.2307}{[arXiv:1004.2307]}.
  
\bibitem{kellystreet}
G.~M.~Kelly and R.~Street, \textsl{Review of the elements of 2-categories}, In Category Seminar (Proc. Sem., Sydney, 1972/1973), pages 75--103, Lecture Notes in Math., Vol. 420, Springer, Berlin, 1974.

\bibitem{kr0401268}
M.~Khovanov and L.~Rozansky, \textsl{Matrix factorizations and link homology}, Fund. Math.
  \textbf{199} (2008), 1--91,
  \href{http://arxiv.org/abs/math/0401268}{[math/0401268]}.
  
\bibitem{lack}
S.~Lack, \textsl{A $2$-categories companion}, 
Towards Higher Categories, 
The IMA Volumes in Mathematics and its Applications \textbf{152} (2010), 105--191, 
\href{http://arxiv.org/abs/math/0702535}{[math/0702535]}.

\bibitem{laz}
C.~I.~Lazaroiu, \textsl{On the boundary coupling of topological {L}andau-{G}inzburg models}, JHEP 0505 (2005), no. 037 [hep-th/0312286].
  
\bibitem{McNameethesis}
D.~McNamee, \textsl{On the mathematical structure of topological defects in
  {L}andau-{G}inzburg models}, MSc Thesis, Trinity College Dublin, 2009.
  

\bibitem{murfet_ll}
D.~Murfet, \textsl{Logic and linear algebra: an introduction}, \href{http://arxiv.org/abs/1407.2650}{[arXiv:1407.2650]}.


\bibitem{runkel}
I.~Runkel and R.~Suszek, \textsl{Gerbe-holonomy for surfaces with defect networks} Advances in Theoretical and Mathematical Physics 13.4 (2009): 1137--1219, \href{http://arxiv.org/abs/0808.1419}{[arXiv:0808.1419]}.

\bibitem{vistoli}
A.~Vistoli, \textsl{Notes on {G}rothendieck topologies, fibered categories and descent theory}, \href{http://arxiv.org/abs/math/0412512}{[arXiv:0412512]}.

\bibitem{yoshino98}
Y.~Yoshino, \textsl{Tensor products of matrix factorizations}, Nagoya Math. J.
  \textbf{152} (1998), 39--56.
  
\end{thebibliography}
\providecommand{\bysame}{\leavevmode\hbox to3em{\hrulefill}\thinspace}
\providecommand{\href}[2]{#2}

\end{document}